\newtheorem{theorem}{Theorem}
\newtheorem{proposition}[theorem]{Proposition}
\newtheorem{lemma}[theorem]{Lemma}
\newtheorem{corollary}[theorem]{Corollary}
\newtheorem{conjecture}[theorem]{Conjecture}
\theoremstyle{definition}
\newtheorem{thedefinition}[theorem]{Definition}
\newtheorem{theexample}[theorem]{Example}
\newtheorem{question}{Question}
\newtheorem{problem}[theorem]{Problem}
\newcommand{\qeddef}{\ensuremath{\Box}}
\newcommand{\qeddefhere}{\tag*{\qeddef}}
\NewDocumentEnvironment{definition}{s}{
	\begin{thedefinition}
}{
    \IfBooleanTF{#1}
        {}
        {\hfill\qeddef}
	\end{thedefinition}
}\NewDocumentEnvironment{example}{s}{
	\begin{theexample}
}{
    \IfBooleanTF{#1}
        {}
        {\hfill\qeddef}
	\end{theexample}
}
\newcommand{\setN}{\mathbb{N}}
\newcommand{\setP}{\mathbb{N}^+}
\newcommand{\setQ}{\mathbb{Q}}
\newcommand{\qvec}{\mathbf{q}}
\newcommand{\xvec}{\mathbf{x}}
\newcommand{\xvar}{\mathbf{x}}
\newcommand{\yvec}{\mathbf{y}}
\newcommand{\ILinExt}{\mathcal{L}}
\newcommand{\symS}{\mathfrak{S}}
\newcommand{\psumP}{\mathrm{p}}
\newcommand{\elementaryE}{\mathrm{e}}
\newcommand{\completeH}{\mathrm{h}}
\newcommand{\schurS}{\mathrm{s}}
\newcommand{\QS}{\mathcal{QS}}
\newcommand{\RS}{\mathcal{RS}}
\newcommand{\LLT}{\mathrm{G}}
\newcommand{\chrom}{\mathrm{X}}
\newcommand{\gessel}{\mathrm{F}}
\newcommand{\qmonom}{\mathrm{M}}
\newcommand{\coloring}{{\kappa}}
\newcommand{\SYT}{\mathrm{SYT}}
\DeclareMathOperator{\DES}{DES}
\DeclareMathOperator{\DEX}{DEX}
\DeclareMathOperator{\EXC}{EXC}
\DeclareMathOperator{\des}{des}
\DeclareMathOperator{\exc}{exc}
\DeclareMathOperator{\length}{\ell}
\DeclareMathOperator{\inv}{inv}
\DeclareMathOperator{\asc}{asc}
\newcommand{\refq}[1]{\eqref{eq:#1}}
\newcommand{\rpp}{\mathcal{A}^r}
\newcommand{\compset}[1]{S_{#1}}
\newcommand{\compb}[2]{B_{#1}^{#2}}
\newcommand{\compp}[2]{P_{#1}^{#2}}
\newcommand{\abs}[1]{\left| #1 \right|}
\newcommand{\banners}{\mathfrak{B}}
\newcommand{\CONS}[2]{\textsc{Cons}(#1,#2)}
\newcommand{\necklaces}{\mathfrak{N}}
\newcommand{\posets}{\mathscr{P}}
\newcommand{\down}{\mathtt{d}}
\newcommand{\up}{\mathtt{u}}
\newcommand{\step}[1]{a_{#1}}
\newcommand{\classes}{\mathscr{C}}
\newcommand{\opsurj}{\mathcal{O}}
\newcommand{\poseta}{x}
\newcommand{\posetb}{y}
\newcommand{\posetc}{z}
\newcommand{\poseti}[1]{x_{#1}}
\newcommand{\posetj}[1]{y_{#1}}
\newcommand{\matBasisB}{\mathcal{B}}
\newcommand{\oeis}[1]{\href{http://oeis.org/#1}{#1}}
\title{
$P$-partitions and $p$-positivity}
\author{Per Alexandersson}
\address{Dept. of Mathematics, Royal Institute of Technology, SE-100 44 Stockholm, Sweden}
\email{per.w.alexandersson@gmail.com}
\author{Robin Sulzgruber}
\address{Dept. of Mathematics, Royal Institute of Technology, SE-100 44 Stockholm, Sweden}
\email{robinsul@kth.se}
\keywords{Chromatic quasisymmetric functions, LLT polynomials, P-partitions, power sums, quasisymmetric functions, Tutte polynomials, unimodality}
\subjclass[2010]{Primary~05E05; Secondary~06A07, 05A05.}
\date{July 2018}
\begin{document}

\begin{abstract}

Using the combinatorics of $\alpha$-unimodal sets, we establish two new results in the theory of quasisymmetric functions.
First, we obtain the expansion of the fundamental basis into quasisymmetric power sums.
Secondly, we prove that generating functions of reverse $P$-partitions expand positively into quasisymmetric power sums.
Consequently any nonnegative linear combination of such functions is $p$-positive whenever it is symmetric.
As an application we derive positivity results for chromatic quasisymmetric functions, unicellular and vertical strip LLT polynomials, multivariate Tutte polynomials and the more general $B$-polynomials, matroid quasisymmetric functions, and certain Eulerian quasisymmetric functions, thus reproving and improving on numerous results in the literature.

\end{abstract}
\maketitle


\section{Introduction}
\label[section]{sec:introduction}

Whenever a new family of symmetric functions is discovered, one of the most logical first steps to take is to expand them in one of the many interesting bases of the space of symmetric functions.
This paradigm can be traced from Newton's identities to modern textbooks such as \cite{Macdonald79symmetric}.
Of special interest are expansions in which all coefficients are nonnegative integers.
Such coefficients frequently encode highly nontrivial combinatorial or algebraic information.

One of the most well-studied bases is formed by the power sum symmetric functions.
Symmetric functions that expand into power sum symmetric functions with nonnegative coefficients are called \emph{$p$-positive}.
Recent works in which $p$-positivity is discussed include \cite{ShareshianWachs2010,SaganShareshianWachs2011,Athanasiadis15,ShareshianWachs2016,Ellzey2016,AlexanderssonPanova2016}.
The expansion of a symmetric function into power sum symmetric functions can be useful, for instance, when one is working with plethystic substitution~\cite{LoehrRemmel2010}, or evaluating certain polynomials at roots of unities~\cite{Desarmenien1983,SaganShareshianWachs2011}.

Suppose $X$ is a symmetric function for which we would like to know the expaonsion into power sum symmetric functions.
In some of the papers mentioned above the following pattern recurs.
First, expand $X$ into fundamental quasisymmetric functions using R.~Stanley's theory of $P$-partitions.
Secondly, conduct some analysis specific to the function $X$ at hand to obtain the $p$-expansion of $X$.
Ideally, one would ask for a more uniform approach.

\begin{question}\label[question]{q1}
Is there a uniform method for deriving the expansion of a given symmetric function into power sum symmetric functions whenever the theory of $P$-partitions is applicable?
\end{question}

In practice the functions of interest often belong to the larger space of quasisymmetric functions.
Clearly if a quasisymmetric function expands into power sum symmetric functions, positively or not, then it has to be symmetric.
This leads to results of the following type:
``Suppose $X$ belongs to some special family of quasisymmetric functions $\mathcal{F}$.
Then $X$ is $p$-positive if and only if $X$ is symmetric.''
In this case it is very natural to ask the following.

\begin{question}\label[question]{q2}
Is there a more general positivity phenomenon hiding in the background, which encompasses all quasisymmetric functions that belong to $\mathcal{F}$\,?
\end{question}

In this paper we answer both \cref{q1} and \cref{q2} in the affirmative.
Key to these answers are the quasisymmetric power sums $\Psi_{\alpha}$.
Quasisymmetric power sums originate in the work of I.~Gelfand et al.~\cite{GelfandKrobLascouxLeclercRetakhThibon1995} on noncommutative symmetric functions, and were recently investigated by C.~Ballantine et al.~\cite{BallantineDaughertyHicksMason2017}.\footnote{There are several different quasisymmetric power sum bases. We use one denoted by $\Psi_{\alpha}$ in \cite{BallantineDaughertyHicksMason2017}.}
The family $\Psi_{\alpha}$, where $\alpha$ ranges over all compositions of $n$, forms a basis of the space of homogeneous quasisymmetric functions of degree $n$, and refines the power sum symmetric functions.

This paper has two main results that easily fit into the existing theory of quasisymmetric functions.
The first result is a formula for the expansion of the fundamental quasisymmetric functions into quasisymmetric power sums.

\begin{theorem}[\cref{thm:gesselInPSum}]
\label[theorem]{t1}
Let $n\in\setN$ and $S \subseteq [n-1]$.
Then the fundamental quasisymmetric function $\gessel_{n,S}$ expands into quasisymmetric power sums as
\begin{equation}\label{eq1}
\gessel_{n,S}(\xvec)
= \sum_{\alpha
}
\frac{\Psi_{\alpha}(\xvec)}{z_{\alpha}} (-1)^{|S\setminus \compset{\alpha}|}
\,,
\end{equation}
where the sum ranges over all compositions $\alpha$ of $n$ such that $S$ is $\alpha$-unimodal.
\end{theorem}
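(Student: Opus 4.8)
The plan is to prove \cref{t1} by inverting the known expansion of the quasisymmetric power sums $\Psi_\alpha$ in the fundamental basis, or equivalently, by a direct change-of-basis argument anchored in the monomial quasisymmetric functions. First I would recall the two standard expansions: $\gessel_{n,S} = \sum_{T \supseteq S} \qmonom_{n,T}$ (sum over the monomial quasisymmetric functions indexed by sets $T\subseteq[n-1]$ containing $S$), and the expansion of $\Psi_\alpha/z_\alpha$ into the monomial basis, which by \cite{BallantineDaughertyHicksMason2017} has the shape $\Psi_\alpha/z_\alpha = \sum_\beta c_{\alpha\beta}\,\qmonom_{n,\compset{\beta}}$ for explicit rational coefficients $c_{\alpha\beta}$ supported on compositions $\beta$ refining $\alpha$ in a suitable ordering. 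Substituting the claimed right-hand side of \refq{eq1} into the monomial basis and collecting the coefficient of $\qmonom_{n,T}$ for a fixed $T\subseteq[n-1]$, the theorem reduces to the combinatorial identity
\begin{equation*}
\sum_{\alpha} (-1)^{|S\setminus \compset\alpha|}\, c_{\alpha,\,\beta(T)} = [\,T \supseteq S\,],
\end{equation*}
where the sum is over compositions $\alpha$ of $n$ for which $S$ is $\alpha$-unimodal and $\beta(T)$ is the composition with descent set $T$, and $[\cdot]$ is the Iverson bracket.

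Carrying this out, the key steps in order are: (1) fix notation for $\alpha$-unimodality and restate the monomial expansion of $\Psi_\alpha$ from the cited source in the form most convenient for the sum above; (2) for a fixed pair $(S,T)$, organize the compositions $\alpha$ contributing to the coefficient of $\qmonom_{n,T}$ according to which refinement/coarsening relations they satisfy with respect to $\beta(T)$ and how their "part boundaries" interleave with $S$; (3) show that the signed sum telescopes — the natural involution pairs each $\alpha$ for which some block of $\beta(T)$ can be "merged or split" without violating $\alpha$-unimodality of $S$ with its partner of opposite sign, leaving as the only surviving term the case that forces $T\supseteq S$ with coefficient $1$. An alternative, possibly cleaner route is to verify directly that the two triangular matrices $(-1)^{|S\setminus\compset\alpha|}\mathbf{1}[S \text{ is }\alpha\text{-unimodal}]$ and the fundamental-to-monomial matrix compose to the monomial-to-power-sum matrix; since all three matrices are triangular with respect to refinement, it suffices to check the identity on the level of generating functions in one set of variables, or to exhibit the inverse relationship $\Psi_\alpha = \sum_{S}(\text{signs})\,\gessel_{n,S}$ and appeal to uniqueness of the inverse.

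The main obstacle I anticipate is step (3): constructing the sign-reversing involution on the set of $\alpha$-unimodal compositions and verifying that it is well-defined and fixed-point-free except on the single term forcing $T\supseteq S$. The notion of $\alpha$-unimodality mixes ascending and descending runs relative to the part boundaries of $\alpha$, so the pairing must respect this structure, and care is needed at the boundaries of $S$ and at blocks of size one. I expect this to hinge on a lemma — likely proved earlier in the paper in the section developing $\alpha$-unimodal sets — characterizing when merging two adjacent parts of $\alpha$ preserves $\alpha$-unimodality of a fixed set $S$; once such a lemma is available, the involution is the obvious "merge the first mergeable block" map, and the sign bookkeeping via $(-1)^{|S\setminus\compset\alpha|}$ falls out because merging changes $\compset\alpha$ by exactly one element, which lies in $S$ precisely in the non-cancelling situation. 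Everything else (the two standard expansions, triangularity, and the final collection of terms) is routine.
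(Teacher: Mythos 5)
Your reduction is exactly the paper's first step: expanding both sides in the monomial basis and comparing coefficients of $\qmonom_{n,T}$ does reduce \cref{t1} to the identity $\sum_{\alpha}(-1)^{|S\setminus\compset{\alpha}|}\,\pi(\alpha,\beta(T))^{-1}=[\,T\supseteq S\,]$, summed over $\alpha\leq\beta(T)$ with $S$ $\alpha$-unimodal. The gap is in your step (3). The terms of this sum carry unequal rational weights $1/\pi(\alpha,\beta(T))$, and merging or splitting a part of $\alpha$ changes $\pi(\alpha,\beta(T))$; so a sign-reversing involution that pairs compositions two at a time cannot cancel the sum, no matter how the merge is chosen. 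In fact the cancellation in the case $T\not\supseteq S$ genuinely requires a \emph{three}-term relation of the form $1/\pi(\alpha,\beta)=1/\pi(\alpha',\beta)+1/\pi(\alpha'',\beta)$ for suitable triples $\{\alpha,\alpha',\alpha''\}$ differing in the positions $\{i-1,i\}$ (only in the degenerate subcase where $i-1$ is already a boundary of $\beta$ does a two-term pairing with equal weights occur). Your sign bookkeeping addresses only the sign, not the magnitude, of the coefficients. Moreover, in the surviving case $T\supseteq S$ it is not true that a single term survives with coefficient $1$: every $\alpha\leq\beta(T)$ contributes with sign $+1$, and one must prove the nontrivial identity $\sum_{\alpha\leq\beta}1/\pi(\alpha,\beta)=1$, which your proposal does not address.

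The paper's resolution of exactly this difficulty is to clear denominators: multiply by $n!$ and use the identity $n!=|\CONS{\alpha}{\beta}|\cdot\pi(\alpha,\beta)$ (proved in \cref{prop:treeHook} via the hook-length formula for forests), which converts the weighted sum over compositions into an unweighted signed count of consistent permutations. The required identity then becomes \cref{thm:CONS}, where the case $\beta\leq\gamma$ is handled by the subword-reversal bijection $\bigsqcup_{\alpha\leq\beta}\CONS{\alpha}{\beta}\to\symS_n$ (giving the sum equal to $1$ after dividing by $n!$), and the case $\beta\nleq\gamma$ by a bijection $\CONS{\alpha}{\beta}\to\CONS{\alpha'}{\beta}\sqcup\CONS{\alpha''}{\beta}$ realizing the three-term cancellation at the level of permutations. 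To repair your argument you would need either this passage to consistent permutations (or some equivalent device), or a direct proof of the weighted two- and three-term relations among the $1/\pi(\alpha,\beta)$ together with the summation identity for the case $T\supseteq S$; the plain merge/split involution on compositions as you describe it does not suffice.
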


Here we use the standard notation $[n] \coloneqq \{1,2,\dotsc,n\}$.
The definitions of $\alpha$-unimodal sets and the set $\compset{\alpha}$ are found in Sections~\ref{sec:unimodal}.
The quasisymmetric functions $\gessel_{n,S}$ and $\Psi_{\alpha}$ and the factor $z_{\alpha}$ are defined in~\cref{sec:gessel}.
The proof of \cref{t1} relies on a new result on $\alpha$-unimodal sets, and on the hook-length formula for forests.

\cref{t1} yields a new proof of results due to Y.~Roichman~\cite[Thm.~4]{Roichman1997} and C.~Athanasiadis~\cite[Prop.~3.2]{Athanasiadis15}, both of which feature $\alpha$-unimodal sets as well.

The second main result concerns reverse $P$-partitions, which were introduced by R.~Stanley~\cite{Stanley1972}.
In the simplest case reverse $P$-partitions are order-preserving maps from a finite poset $P$ to the positive integers.
The generating function of reverse $P$-partitions is defined as 
\[
K_P(\xvec)
\coloneqq
\sum_{\substack{f : P\to \setP \\ \poseta <_P \posetb \Rightarrow f(\poseta)\leq f(\posetb)}}
\prod_{\poseta \in P } \xvar_{f(\poseta)}.
\]
The function $K_P$ is a homogeneous quasisymmetric function of degree $n=\abs{P}$.
We prove that $K_P$ expands positively into quasisymmetric power sums $\Psi_{\alpha}$ and provide two combinatorial interpretations for the involved coefficients.

\begin{theorem}[Theorems~\ref{thm:KP_positive} and~\ref{thm:KP_opsurj}]
\label{t2}
Let $(P,w)$ be a naturally labeled poset with $n$ elements. Then
\begin{equation}
K_{P}(\xvec)
=
\sum_{\alpha\vDash n}
\frac{\Psi_{\alpha}(\xvec)}{z_{\alpha}}
|\ILinExt^\ast_{\alpha}(P,w)|
=
\sum_{\alpha\vDash n}
\frac{\Psi_{\alpha}(\xvec)}{z_{\alpha}}
|\opsurj_{\alpha}^\ast(P)|
\,,
\end{equation}
were both sums range over all compositions $\alpha$ of $n$.
In particular, the quasisymmetric function $K_P$ is $\Psi$-positive.
\end{theorem}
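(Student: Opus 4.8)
The plan is to chain together Stanley's fundamental expansion of $K_P$, the expansion of \cref{t1}, and a sign-cancellation argument. First, since $(P,w)$ is naturally labeled, R.~Stanley's theory of $P$-partitions \cite{Stanley1972} expands $K_P$ positively into fundamental quasisymmetric functions indexed by the linear extensions of $(P,w)$: writing $\ILinExt(P,w)\subseteq\symS_n$ for the set of linear extensions read off as permutations via $w$, one has $K_P(\xvec)=\sum_{\sigma\in\ILinExt(P,w)}\gessel_{n,\DES(\sigma)}(\xvec)$, with the convention appropriate to reverse $P$-partitions.

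Next I would apply \cref{t1} to each summand and interchange the order of summation. This yields
\[
K_P(\xvec)=\sum_{\alpha\vDash n}\frac{\Psi_\alpha(\xvec)}{z_\alpha}\,c_\alpha(P,w),
\qquad
c_\alpha(P,w)=\sum_{\substack{\sigma\in\ILinExt(P,w)\\ \DES(\sigma)\ \alpha\text{-unimodal}}}(-1)^{\,|\DES(\sigma)\setminus\compset{\alpha}|}.
\]
Thus everything reduces to showing that each integer $c_\alpha(P,w)$ is nonnegative and admits the two claimed combinatorial descriptions. In particular, $\Psi$-positivity follows at once once we know $c_\alpha(P,w)\geq 0$, since $z_\alpha>0$.

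The crux is the collapse of the alternating sum defining $c_\alpha(P,w)$ to an honest count. The mechanism I expect is a sign-reversing involution on the ``non-extremal'' linear extensions: given $\sigma$ with $\DES(\sigma)$ being $\alpha$-unimodal, one uses the combinatorics of $\alpha$-unimodal sets developed in \cref{sec:unimodal} to locate a canonical position inside a block of $\alpha$ at which $\sigma$ admits an order-preserving adjacent transposition that toggles membership of exactly one descent while preserving $\alpha$-unimodality of the descent set; swapping there is an involution that flips the sign $(-1)^{|\DES(\sigma)\setminus\compset{\alpha}|}$. Its fixed points are precisely the ``$\alpha$-rigid'' linear extensions, and one defines $\ILinExt^\ast_\alpha(P,w)$ to be this fixed-point set, so that $c_\alpha(P,w)=|\ILinExt^\ast_\alpha(P,w)|$. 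The delicate points are the precise definition of the canonical toggle and the verification --- via the lemma on $\alpha$-unimodal sets --- that it never escapes the class of $\alpha$-unimodal descent sets and accounts for the sign exactly; this is where I expect the real work to lie.

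Finally, for the second equality I would construct a bijection between $\ILinExt^\ast_\alpha(P,w)$ and a set $\opsurj^\ast_\alpha(P)$ of ordered surjections (equivalently, ordered set partitions with block sizes $\alpha$) of the ground set of $P$ that are compatible with the order relations of $P$; since the target makes no reference to the labeling $w$, this simultaneously yields $|\ILinExt^\ast_\alpha(P,w)|=|\opsurj^\ast_\alpha(P)|$ and shows that the coefficient depends only on $P$. Combining the three identities gives the theorem.
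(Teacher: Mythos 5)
Your outer framework is exactly the paper's: expand $K_P$ via Stanley's theorem over $\ILinExt(P,w)$, substitute the $\Psi$-expansion of each $\gessel_{n,\DES(\sigma)}$ from \cref{t1}, interchange sums, and reduce to showing that the alternating sum $c_\alpha(P,w)$ is the claimed cardinality. The gap is in the crux you defer. First, there is a circularity: the theorem asserts that $c_\alpha(P,w)=|\ILinExt^\ast_{\alpha}(P,w)|$ for the \emph{already defined} set $\ILinExt^\ast_{\alpha}(P,w)$ (those $\alpha$-unimodal $\sigma\in\ILinExt(P,w)$ for which every block subposet $\compp{i}{\alpha}(\sigma)$ has a unique minimal element), whereas you propose to \emph{define} $\ILinExt^\ast_{\alpha}(P,w)$ as the fixed-point set of an involution you have not constructed. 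With that definition nothing is proved about the stated set, and your last step --- the bijection to $\opsurj^{\ast}_{\alpha}(P)$ --- becomes unavailable, since that bijection (the paper's \cref{prop:opsurj}, $\sigma\mapsto f_\sigma$) rests precisely on the structural characterization ``unique minimal element in each fiber'' together with $\DES(\sigma)\subseteq\compset{\alpha}$ (\cref{lemma:no_descents}); you would also need to say what $\opsurj^{\ast}_{\alpha}(P)$ is, which again requires that same condition rather than mere order-compatibility of an ordered set partition.

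Second, the proposed mechanism --- a canonical \emph{adjacent} transposition that toggles one descent, governed only by the combinatorics of $\alpha$-unimodal sets --- is not the right shape of cancellation. The required involution (the paper's \cref{thm:ILinExt}, in the spirit of Ellzey) is driven by the \emph{poset} structure, not by descents alone: in the first block whose subposet $\compp{i}{\alpha}(\sigma)$ has two minimal elements one takes the largest label $M$ of a minimal element, finds the interval $[j,m]$ of positions in the block with values $\leq M$, and applies the cycle $(j,\dots,m)$ or its inverse, moving $M$ across possibly many positions; an adjacent swap generally destroys the linear-extension property or fails to be an involution. Moreover there are $\sigma$ with $\DES(\sigma)\subseteq\compset{\alpha}$ (so no descent outside $\compset{\alpha}$ to toggle) that nevertheless must cancel because some $\compp{i}{\alpha}(\sigma)$ has two minimal elements (cf.\ \cref{ex:ILinExtast}); any pairing rule that only looks at $\alpha$-unimodality of $\DES(\sigma)$ cannot detect these terms. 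So the step you flag as ``where the real work lies'' is indeed missing, and the specific toggle you sketch would fail; you need the block-cycle involution on $\ILinExt_{\alpha}(P,w)\setminus\ILinExt^\ast_{\alpha}(P,w)$, its compatibility with the linear-extension condition, and then the explicit bijection $\sigma\mapsto f_\sigma$ to order-preserving surjections with unique minimal elements in each fiber.
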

The set $\ILinExt^\ast_{\alpha}(P,w)$ consists of certain $\alpha$-unimodal linear extensions\footnote{To be precise, the elements of $\ILinExt_{\alpha}^{\ast}(P,w)$ lie in the Jordan--H{\"o}lder set of $(P,w)$, that is, they are perhaps more accurately described as inverses of linear extensions of $(P,w)$.} of $P$.
The definition is given in \cref{sec:unimodal}.
The set $\opsurj_{\alpha}^{\ast}(P)$ consists of certain order-preserving surjections from $P$ onto a chain.
The definition is given in \cref{sec:opsurj}.
The proof of \cref{t2} uses the well-known expansion of $K_P$ into the fundamental basis, \cref{t1}, and a sign-reversing involution closely related to an involution constructed by B.~Ellzey in~\cite[Thm.~4.1]{Ellzey2016}.

It follows from \cref{t2} that any \emph{symmetric} function which is a positive linear combination of functions $K_P$ for posets $P$ is $p$-positive.
This affirms \cref{q1}.

It is a manifestation of the ubiquity of reverse $P$-partitions in algebraic combinatorics that many interesting families of symmetric and quasisymmetric functions can be expressed as nonnegative linear combinations of functions $K_P$. 
By \cref{t2} each function $X$ that belongs to such a family is $\Psi$-positive. 
This answers \cref{q2} for a large class of families $\mathcal{F}$.

As an application we give positivity results and combinatorial interpretations for the coefficients in the expansion into (quasi)symmetric power sums for the following families of quasisymmetric functions:
\begin{itemize}
\item The chromatic quasisymmetric functions of J.~Shareshian and M.~Wachs, \cite{ShareshianWachs2016}.
We prove a generalization of a recent result by B.~Ellzey~\cite{Ellzey2016},
that applies to all directed graphs, and not only those with a symmetric chromatic quasisymmetric functions.
Our result also extends to a $q$-generalization of so 
called $k$-balanced chromatic quasisymmetric functions that were introduced by B.~Humpert in \cite{Humpert2011}.

\item Unicellular and vertical-strip LLT polynomials, which are of special 
interest in \cite{CarlssonMellit2017} and in the study of diagonal harmonics.
This generalizes an observation in \cite{AlexanderssonPanova2016,HaglundWilson2017} and answers an open problem in \cite{AlexanderssonPanova2016}. 
Furthermore, this result provides more supporting evidence regarding a related $e$-positivity conjecture.

\item The multivariate Tutte polynomials introduced by 
R.~Stanley \cite{Stanley98Chromatic}, and the more general $B$-polynomals on directed graphs due to J.~Awan and O.~Bernardi, \cite{AwanBernardi2016}.

\item The quasisymmetric functions associated to matroids due to L.~Billera, N.~Jia and V.~Reiner, \cite{BilleraJiaReiner2009}.

\item Certain Eulerian quasisymmetric functions introduced by J.~Shareshian and M.~Wachs in \cite{ShareshianWachs2010}.
\end{itemize}

\cref{fig:polynomialGraph} gives an overview of some of the bases of symmetric and quasisymmetric functions that are mentioned in this paper.
\begin{figure}[t]
\includegraphics[width=0.7\textwidth]{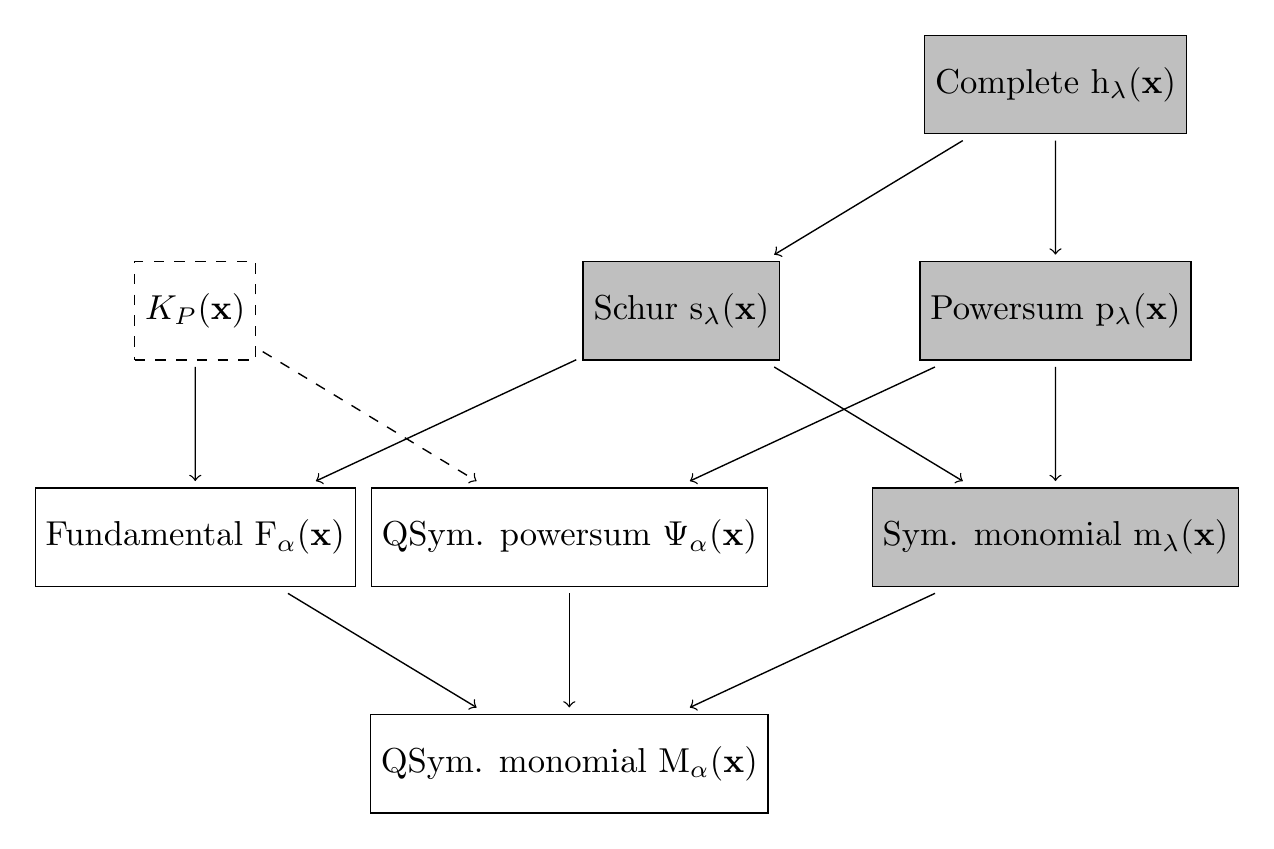}
\caption{
An overview of the families of functions we discuss. The shaded families are symmetric functions (and bases for the corresponding space).
The remaining families are bases for the space of quasisymmetric functions, except for $K_P$ which is too large to be a basis.
The arrows represent the relation ``expands positively in'' (which of course is a transitive relation).
The dashed line is the result in \cref{t2}.
}\label{fig:polynomialGraph}
\end{figure}

\subsection{Outline}

In \cref{sec:unimodal} we engage in the combinatorics of $\alpha$-unimodal permutations and sets.
We prove two new results which are instrumental in the proofs of our main theorems.
In particular we define the set $\ILinExt_{\alpha}^{\ast}(P,w)$ attached to a labeled poset as the set of certain $\alpha$-unimodal linear extensions of $P$.
In \cref{sec:gessel} we give a short introduction to quasisymmetric functions and define quasisymmetric power sums.
We proceed to prove \cref{t1} and conclude \cite[Prop.~3.2]{Athanasiadis15} as a corollary.
In \cref{sec:revPP} we define reverse $P$-partitions and prove the first half of \cref{t2}.
\cref{sec:opsurj} is dedicated to order-preserving surjections onto chains.
It contains the definition of the set $\opsurj_{\alpha}^{\ast}(P)$ and the proof of the second half of \cref{t2}.
In \cref{sec:revPPeq} we generalize \cref{t2} to include weighted posets or, equivalently, reverse $P$-partitions with forced equalities.
This is perhaps the most technical section and it is not required to understand the rest of the paper.
In \cref{sec:applications} we use the developed tools to derive $\Psi$-expansions of some of the most commonly used bases of the space of symmetric functions ($h_{\lambda}$, $p_{\lambda}$ and $s_{\lambda}$), including Roichman's formula~\cite[Thm.~4]{Roichman1997}.
Moreover we obtain the positivity results mentioned above.
Finally, in \cref{sec:directions} we mention several interesting direction that could be pursued in the future, as well as some ideas that, sadly, do not work.

\subsection{Acknowledgements}

The authors would like to thank Svante Linusson.
The first author is funded by the \emph{Knut and Alice Wallenberg Foundation} (2013.03.07).

\numberwithin{theorem}{section}

\section{\texorpdfstring{$\alpha$}{alpha}-unimodal combinatorics}
\label[section]{sec:unimodal}

In this section we investigate $\alpha$-unimodal permutations, sets and compositions.
Our main objective is to prove two bijective results, namely \cref{thm:CONS} and \cref{thm:ILinExt}, 
which we apply to the theory of quasisymmetric functions in the subsequent sections.
However, we contend that $\alpha$-unimodal combinatorics is an interesting topic in its own right.

The so called \emph{$\alpha$-unimodal} sets, where $\alpha$ is a composition, first appear in a recursive 
formula for Kazhdan--Lusztig characters of the Hecke algebra of type $A_{n-1}$ due to Y.~Roichman~\cite[Thm.~4]{Roichman1997}.
A bijective treatment of this formula was later given by A.~Ram~\cite{Ram1998}.
The term $\alpha$-unimodal was coined in~\cite{AdinRoichman2015}.
We refer to \cite{ElizaldeRoichman2013,Thibon2001} for more results on unimodal permutations.
More recently $\alpha$-unimodal sets were used in the works of C.~Athanasiadis~\cite{Athanasiadis15} and B.~Ellzey~\cite{Ellzey2016} 
to derive the power sum expansions of certain families of symmetric functions.

A word $\sigma_1\dotsm\sigma_n$ is \emph{unimodal} if there exists an index $k\in[n]$ such that
\begin{equation}\label{eq:unimodal}
\sigma_1>\dotsm>\sigma_k<\dotsm<\sigma_n.
\end{equation}
For instance, note that a permutation $\sigma\in\symS_n$ is unimodal if and only if the set
$\{\sigma^{-1}({1}),\dotsc,\sigma^{-1}({i})\}$ is a subinterval of $[n]$ for all $i\in[n]$.
We remark that this definition, which is borrowed from~\cite{AdinRoichman2015,Athanasiadis15,Ellzey2016}, is not 
standard in the study of unimodal sequences, where one would usually assume $\sigma_1<\dotsm<\sigma_k>\dotsm>\sigma_n$ instead.
However, \eqref{eq:unimodal} is more natural if one wants to use descents rather than ascents, 
which is common practice when working with tableaux and quasisymmetric functions.
In Sections~\ref{sec:applications} and~\ref{sec:directions} we discuss unimodality of polynomials.
In that case the standard definition is used, that is, a polynomial $a_0+a_1q+\dotsm+a_dq^d$ is \emph{unimodal} if there exists $k\in\{0,\dotsc,d\}$ with $a_0\leq\dotsm\leq a_k\geq\dotsm\geq a_d$.

Given a composition $\alpha$ of $n$ with $\ell$ parts define
\[
\compset{\alpha}\coloneqq\{\alpha_1,\alpha_1+\alpha_2,\dotsc,\alpha_1+\dotsm+\alpha_{\ell-1}\}
\,.
\]
The correspondence $\alpha\mapsto\compset{\alpha}$ is a well-known bijection between compositions of $n$ and subsets of $[n-1]$.
Sometimes it is more convenient to 
work with the set $\compset{\alpha}^0\coloneqq\compset{\alpha}\cup\{0\}$ instead.
Let $\alpha,\beta$ be compositions of $n$.
We say $\alpha$ \emph{refines} $\beta$ if $\compset{\beta}\subseteq\compset{\alpha}$.
Refinement on compositions is denoted by $\alpha\leq\beta$.
Equivalently, $\alpha\leq\beta$ if $\beta$ can be obtained by adding contiguous parts of $\alpha$.
We also say $\beta$ is \emph{coarser} than $\alpha$.

Define the \emph{blocks} of $\alpha$ as the sets
\[
\compb{1}{\alpha} \coloneqq \{1,2,\dotsc,\alpha_1\},\quad
\compb{2}{\alpha} \coloneqq \{\alpha_1+1,\dotsc,\alpha_1+\alpha_2\},
\]
and so on, such that $|\compb{i}{\alpha}|= \alpha_i$ for all $i\in[\ell]$ and $\{\compb{1}{\alpha},\compb{2}{\alpha},\dotsc,\compb{\ell}{\alpha}\}$ forms a set partition of $[n]$.

\begin{example}*
Let $\alpha=11213311$ and $\beta=535$.
Then $\compset{\alpha}=\{1,2,4,5,8,11,12\}$, $\compset{\beta}=\{5,8\}$ and $\alpha\leq\beta$.
The blocks of $\alpha$ are 
\[
\{1\},\{2\},\{3,4\},\{5\},\{6,7,8\},\{9,10,11\},\{12\} \text{ and } \{13\}.
\qeddefhere
\]
\end{example}

Let $\alpha$ be a composition of $n$.
A permutation $\sigma\in\symS_n$ is \emph{$\alpha$-unimodal} if the word obtained by 
restricting $\sigma$ to the block $\compb{i}{\alpha}$ is unimodal for all $i\in[\ell]$.
That is, for all $i\in[\ell]$ if $\compb{i}{\alpha}=[a,b]$ then there exists $k\in[a,b]$ such that
$\sigma_a>\dotsm>\sigma_k<\dotsm\sigma_b$.

The \emph{descent set} of a permutation $\sigma\in\symS_n$ is defined as
\[
\DES(\sigma) \coloneqq \{i\in[n-1]: \sigma(i) > \sigma(i+1) \},
\]
Furthermore, set $\des(\sigma)\coloneqq\abs{\DES(\sigma)}$.

A set $S \subseteq [n-1]$ is \emph{$\alpha$-unimodal} if it is the descent set of an $\alpha$-unimodal permutation.
Equivalently, $S\subseteq[n-1]$ is $\alpha$-unimodal if for all $i \in [\ell]$ the intersection of $S$ with $\compb{i}{\alpha}\setminus\compset{\alpha}$
is a prefix of the latter.

There is yet another equivalent description of $\alpha$-unimodal sets.
Define the binary sequence $a_1,\dots,a_{n-1}$ by letting $a_i=1$ if $i\in S$, and $a_i=0$ otherwise.
Similarly define $b_1,\dots,b_{n-1}$ by $b_i=1$ if $i\in\compset{\alpha}$, and $b_i=0$ otherwise.
Then $S$ is $\alpha$-unimodal if and only if the two-line arrangement
\begin{equation*}
\begin{matrix}
a_1&a_2&\cdots&a_{n-1}\\
b_1&b_2&\cdots&b_{n-1}
\end{matrix}
\qquad
\text{avoids the pattern}
\qquad
\begin{matrix}
0&1\\
0&0
\end{matrix}
\quad,
\end{equation*}
that is,
\begin{equation*}
\begin{matrix}
a_i&a_{i+1}\\
b_i&b_{i+1}
\end{matrix}
\quad
\neq
\quad
\begin{matrix}
0&1\\
0&0
\end{matrix}
\end{equation*}
for all $i\in[n-2]$.

Let $U_{\alpha}$ denote the set of $\alpha$-unimodal subsets of $[n-1]$.

\begin{example}
Let $\alpha=33$. Then $\compb{1}{\alpha}\setminus\compset{\alpha}= \{1,2\}$ and $\compb{2}{\alpha}\setminus\compset{\alpha}= \{4,5,6\}$.
The $\alpha$-unimodal subsets of $[5]$ are the following sets:
\begin{center}
\begin{tabular}{ccccccccc}
$\emptyset$ & 1 & 3 & 4 & 12 & 13 & 14 & 34 & 45 \\
123 & 124 & 134 & 145 & 345 & 1234 & 1245 & 1345 & 12345 \\
\end{tabular}
\end{center}
In short, if $2$ appears, then $1$ must also appear and, similarly, if $5$ appears then $4$ must also be there.
Equivalently, if the two-line arrangement
\begin{equation*}
\begin{matrix}
a_1&a_2&a_3&a_4&a_5\\
0&0&1&0&0
\end{matrix}
\end{equation*}
avoids the forbidden pattern, then $a_2=1$ implies $a_1=1$, and $a_5=1$ implies $a_4=1$.

\end{example}

We start out by collecting some basic combinatorial facts on $\alpha$-unimodality.
All of them are straightforward to prove and should be viewed as a warm up to get acquainted with $\alpha$-unimodal sets.

\begin{proposition}\label{lem:unimodal}
Let $n\in\setN$ and $\alpha,\beta$ be compositions of $n$ with $\ell$ parts.

\begin{enumerate}[(i)]
\item\label{item:unimodal_iff}
A subset $S\subseteq[n-1]$ is $\alpha$-unimodal if and only if for all $k\in S\setminus\compset{\alpha}$ we have $k-1\in S\cup\compset{\alpha}^0$.

\item\label{item:unimodal_subset}
If $\alpha\leq\beta$, then $U_{\beta}\subseteq U_{\alpha}$.

\item\label{item:unimodal_leq}
If $\alpha\leq\beta$ then $\compset{\beta}$ is $\alpha$-unimodal.

\item\label{item:unimodal_self}
In particular, $\compset{\alpha}$ is $\alpha$-unimodal.

\item\label{item:unimodal_countU}
The number of $\alpha$-unimodal subsets of $[n-1]$ is given by 
\begin{equation}\label{eq:Ua}
|U_{\alpha}|=2^{\ell-1}\cdot \alpha_1\cdot \alpha_2 \dotsm \alpha_\ell 
.
\end{equation}

\item\label{item:unimodal_posetU}
The set $U_{\alpha}$ is closed under unions and intersections and therefore forms a sublattice of the Boolean lattice.

\item\label{item:unimodal_Moebius}
The lattice $U_{\alpha}$ is the direct product of chains.
Its M{\"o}bius function is given by
\[
\mu(S)=
\begin{cases}
(-1)^{\abs{S}}&\quad\text{if }S\subseteq\compset{\alpha}\cup\{k+1:k\in\compset{\alpha}^0\},\\
0&\quad\text{otherwise}.
\end{cases}
\]
\item\label{item:unimodal_Va}
Let $V_{\alpha}\coloneqq\{\gamma\vDash n:\compset{\alpha}\in U_{\gamma}\}$.
Then
\begin{equation}\label{eq:Va}
|V_{\alpha}|
=
2^{n-1}\cdot\left(\frac{3}{4}\right)^{m}
\,,
\end{equation}
where $m$ denotes the number of indices $i\in[\ell-1]$ such that $\alpha_i>1$.
\item\label{item:unimodal_posetV} The set $V_{\alpha}$ defined in \eqref{item:unimodal_Va} is an order ideal and thus
a meet-semilattice in the lattice of compositions ordered by refinement.
However, $V_{\alpha}$ is generally not a sublattice.

\item\label{item:unimodal_gf}
The $q,t$-generating function of $\alpha$-unimodal sets is given by
\begin{equation*}
F(q,t,z)
\coloneqq
\sum_{n\geq 1}
\sum_{\alpha\vDash n}
\sum_{S\in U_{\alpha}} q^{\abs{S}}t^{\ell(\alpha)}z^n
=
\frac{tz}{1-(1+q)(1+t)z+qz^2}
\,.
\end{equation*}
\end{enumerate}
\end{proposition}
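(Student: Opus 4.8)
The plan is to treat part \eqref{item:unimodal_iff} as the master characterization and obtain everything else from it. I would first prove \eqref{item:unimodal_iff} by observing that the two-line pattern-avoidance description stated just before the proposition is exactly the column-by-column contrapositive of the asserted condition: the forbidden pattern $\binom{0\,1}{0\,0}$ occurs at position $i$ precisely when $i+1\in S\setminus\compset{\alpha}$ while $i\notin S\cup\compset{\alpha}$, and the remaining case $k=1$ is absorbed by $0\in\compset{\alpha}^0$. The one genuinely combinatorial point is to justify that the pattern-avoiding two-line words are exactly the descent sets of $\alpha$-unimodal permutations; for this I would restrict attention to a single block, use the remark (also recorded before the proposition) that a word is unimodal iff each of its initial-value sets $\{\sigma^{-1}(1),\dots,\sigma^{-1}(i)\}$ is an interval, and then glue the blocks back together. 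After this, everything is formal.

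Granting \eqref{item:unimodal_iff}, parts \eqref{item:unimodal_subset}--\eqref{item:unimodal_self} and \eqref{item:unimodal_posetU} are immediate: if $\alpha\le\beta$ then $\compset{\beta}\subseteq\compset{\alpha}$, so $S\setminus\compset{\alpha}\subseteq S\setminus\compset{\beta}$ and $\compset{\beta}^0\subseteq\compset{\alpha}^0$, forcing $U_\beta\subseteq U_\alpha$; \eqref{item:unimodal_leq} is the special case $S=\compset{\beta}$ (for which $S\setminus\compset{\alpha}=\emptyset$), and \eqref{item:unimodal_self} is $\beta=\alpha$; closure under unions and intersections is a one-line check of the condition in \eqref{item:unimodal_iff}. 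For \eqref{item:unimodal_countU} I would use the prefix description directly: an $\alpha$-unimodal set is built by independently choosing, for each block $\compb{i}{\alpha}$, one of the $\alpha_i$ prefixes of $\compb{i}{\alpha}\setminus\compset{\alpha}$, together with an arbitrary subset of the $\ell-1$ elements of $\compset{\alpha}$, which gives $2^{\ell-1}\alpha_1\cdots\alpha_\ell$.

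The same block decomposition exhibits $(U_\alpha,\subseteq)$ as the direct product of the $\ell$ chains of prefixes of $\compb{i}{\alpha}\setminus\compset{\alpha}$ (the $i$-th having $\alpha_i$ elements) with $\ell-1$ two-element chains, which proves the ``product of chains'' assertion in \eqref{item:unimodal_Moebius}; since the Möbius function of a direct product is the product of the factors' Möbius functions, and on a chain $\mu(\hat0,x)$ equals $1,-1,0$ according as $x=\hat0$, $x$ is an atom, or $x$ lies strictly above an atom, one gets $\mu(S)=(-1)^{|S|}$ when $S$ is a union of atoms and $0$ otherwise; it then remains only to identify the set of atoms with $\compset{\alpha}\cup\{k+1:k\in\compset{\alpha}^0\}$, a short verification matching the first element of each block against $\compset{\alpha}$ (a block of size $1$ contributes no atom of its own, and the spurious element $n$ is irrelevant since $S\subseteq[n-1]$). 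For \eqref{item:unimodal_Va}, applying \eqref{item:unimodal_iff} with $S=\compset{\alpha}$ and a variable $\gamma$ shows that $\gamma\in V_\alpha$ iff $\compset{\gamma}$ meets each pair $\{s-1,s\}$ with $s=\alpha_1+\dots+\alpha_i\in\compset{\alpha}$ and $\alpha_i>1$; these $m$ pairs are pairwise disjoint subsets of $[n-1]$, so the number of admissible $\compset{\gamma}$ is $3^m\cdot2^{\,n-1-2m}=2^{n-1}(3/4)^m$. Part \eqref{item:unimodal_posetV} then follows from \eqref{item:unimodal_subset}: $\gamma\mapsto U_\gamma$ reverses refinement, hence $V_\alpha=\{\gamma:\compset{\alpha}\in U_\gamma\}$ is a down-set, hence a meet-subsemilattice, and a small explicit example shows it need not be closed under joins.

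Finally, for \eqref{item:unimodal_gf} I would decompose a pair $(\alpha,S)$ into its $\ell-1$ non-final blocks followed by one final block, exactly as in \eqref{item:unimodal_countU}. A non-final block of size $s$ contributes $t\,z^{s}(1+q+\dots+q^{s-1})(1+q)$ — the geometric factor records the prefix inside the block, the $(1+q)$ the unconstrained $\compset{\alpha}$-element at the block end, the $t$ its contribution to $\ell(\alpha)$ — while a final block of size $s$ contributes $t\,z^{s}(1+q+\dots+q^{s-1})$. Writing $g(z):=\sum_{s\ge1}z^{s}(1+q+\dots+q^{s-1})=\frac{z}{(1-z)(1-qz)}$ and summing over the number of blocks turns the whole sum into a geometric series:
\[
F(q,t,z)\;=\;\frac{t\,g(z)}{1-(1+q)t\,g(z)}\;=\;\frac{tz}{(1-z)(1-qz)-(1+q)tz}\;=\;\frac{tz}{1-(1+q)(1+t)z+qz^{2}},
\]
using $(1-z)(1-qz)=1-(1+q)z+qz^{2}$ in the last step. (Equivalently one may run a $4\times4$ transfer matrix over the columns $\binom{a}{b}\in\{0,1\}^{2}$ weighted by $q^{a}t^{b}$, with the unique forbidden transition $\binom{0}{0}\mapsto\binom{1}{0}$ coming from the pattern $\binom{0\,1}{0\,0}$, together with an overall factor $tz$.) I expect the only steps needing genuine care to be the descent-set argument inside \eqref{item:unimodal_iff} and the bookkeeping of the prefix geometric series in \eqref{item:unimodal_gf}; every other part is a mechanical consequence of \eqref{item:unimodal_iff} and the block decomposition.
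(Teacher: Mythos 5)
Your proposal is correct, and for parts (i)--(ix) it runs essentially parallel to the paper: the paper likewise treats (i)--(iv), (vi) and (ix) as immediate from the definitions, proves (v) and (vii) via the same decomposition $S\mapsto(S\cap\compset{\alpha},r_1,\dotsc,r_\ell)$ exhibiting $U_{\alpha}$ as a product of chains, and counts $V_{\alpha}$ in (viii) by exactly your disjoint-pairs argument (phrased there through the two-line patterns). The one genuine divergence is part (x): the paper obtains the recursion $F=zt+(1+q)(1+t)zF-qz^2F$ by appending a single column to a shorter compatible pair and subtracting, via the term $qz^2F$, the extensions that create the forbidden pattern, whereas you sum the block contributions directly, getting $F=t\,g(z)/\bigl(1-(1+q)t\,g(z)\bigr)$ with $g(z)=z/\bigl((1-z)(1-qz)\bigr)$. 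Both are sound and equally short; your block-by-block summation makes the factorization underlying the count in (v) explicit, while the paper's column recursion is closer to the transfer-matrix picture you mention in passing. One small bookkeeping remark: for $i=\ell$ the set $\compb{\ell}{\alpha}\setminus\compset{\alpha}$ has $\alpha_\ell$ elements and hence $\alpha_\ell+1$ prefixes, the count $\alpha_\ell$ coming from discarding the full prefix because $n\notin[n-1]$; your handling of the final block in (x) shows you are aware of this, but the phrase ``one of the $\alpha_i$ prefixes'' in (v) should be qualified accordingly.
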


\begin{proof}
Claims~\eqref{item:unimodal_iff}, \eqref{item:unimodal_subset}, \eqref{item:unimodal_leq}, \eqref{item:unimodal_self} and~\eqref{item:unimodal_posetU} follow directly from the definition of $\alpha$-unimodal sets.
Claim~\eqref{item:unimodal_posetV} is an immediate consequence of Claim~\eqref{item:unimodal_subset}.

Given $S\subseteq[n-1]$ let $\varphi(S)\coloneqq(S\cap\compset{\alpha},r_1,\dots,r_{\ell})$, where $r_i\coloneqq\abs{S\cap(\compb{i}{\alpha}\setminus\compset{\alpha})}$ for all $i\in[\ell]$.
It is not difficult to see that this defines a bijection
\begin{equation*}
\varphi:U_{\alpha}
\to
\{T\subseteq\compset{\alpha}\}
\times
\prod_{i=1}^{\ell}\{0,\dotsc,\alpha_i-1\}
\,,
\end{equation*}
which yields \eqref{item:unimodal_countU}.
In fact, the map $\varphi$ is an isomorphism of partially ordered sets, where by definition $(T,r_1,\dotsc,r_{\ell})\leq(T',r_1',\dotsc,r_{\ell}')$ if and only if $T\subseteq T'$ and $r_i\leq r_i'$ for all $i\in[\ell]$.
Thus Claim~\eqref{item:unimodal_Moebius} follows from standard techniques for computing the M{\"o}bius function of finite posets, see~\cite[Sec.~3.8]{StanleyEC1}.

The remaining two claims are best understood using the definition of $\alpha$-unimodality via two-line arrangements of zeroes and ones.
To see Claim~\eqref{item:unimodal_Va} let $a_1,\dotsc,a_{n-1}$ be a fixed binary sequence encoding the set $\compset{\alpha}$.
We are looking to determine the number of compatible sequences $b_1,\dotsc,b_{n-1}$.
Each part of $\alpha$ except the last part corresponds to a pattern $a_ia_{i+1}=01$ if it is greater than $1$.
There are therefore three choices for $b_ib_{i+1}$, namely $01,10$ or $11$.
In total this contributes the factor of $3^m$ in \eqref{eq:Va}.
All other entries of $b$ can be chosen arbitrarily, contributing a factor of $2^{n-2m-1}$.

In order to see Claim~\eqref{item:unimodal_gf} note that a pair of compatible sequences $a,b\in\{0,1\}^{n}$ is either empty, or it can be obtained from a pair of shorter compatible sequences $a',b'\in\{0,1\}^{n-1}$ by appending one of the four patterns
\begin{equation*}
\begin{matrix}
0\\0
\end{matrix}
\qquad
\begin{matrix}
1\\0
\end{matrix}
\qquad
\begin{matrix}
0\\1
\end{matrix}
\qquad
\text{or}
\qquad
\begin{matrix}
1\\1
\end{matrix}
\quad.
\end{equation*}
Sequences $a,b$ that are obtained in this way and do contain the forbidden pattern are of the form
\begin{equation*}
\begin{matrix}
a''&0&1\\
b''&0&0
\end{matrix}
\end{equation*}
where $a'',b''\in\{0,1\}^{n-2}$ avoid the forbidden pattern.
Thus we conclude the recursion
\begin{equation*}
F(q,t,z)
=
zt+(1+q+t+qt)zF(q,t,z)-qz^2F(q,t,z)
\end{equation*}
and Claim~\eqref{item:unimodal_gf} follows.
\end{proof}

The relation defined on the subsets of $[n-1]$ by letting $S\prec\compset{\alpha}$ if and only if $S$ is $\alpha$-unimodal is neither symmetric, antisymmetric nor transitive.
It follows from \cref{lem:unimodal}~\eqref{item:unimodal_gf} that the total number $f(n)$ of such relations satisfies the recursion $f(n) = 4f(n-1) - f(n-2)$ with $f(0)=0$ and $f(1)=1$, and is therefore equal to the sequence \oeis{A001353} in \cite{OEIS}.
This relates $\alpha$-unimodal sets to, for example, spanning trees in a $2\times n$ grid.

It is also easy to obtain $q$-analogues of \cref{lem:unimodal}~\eqref{item:unimodal_countU} and~\eqref{item:unimodal_Va} --- this is left as an exercise.

\medskip

In order to state the first main result of this section we need one more definition, 
which is due to C.~Ballantine et al.~and appears in the study of quasisymmetric 
analogues of the power sum symmetric functions~\cite{BallantineDaughertyHicksMason2017}.

Let $\alpha \leq \beta$ be compositions of $n$.
Given a permutation $\sigma\in\symS_n$ and $i\in[\ell(\alpha)]$ define the 
subword $\sigma^{(i)}\coloneqq\sigma_a\dotsm\sigma_b$ of $\sigma$ where $\compb{i}{\alpha}=[a,b]$.
A permutation $\sigma\in\symS_n$ is called \emph{consistent with $\alpha\leq\beta$} if the following two conditions are satisfied:
\begin{enumerate}[(i)]
\item For each $i\in[\ell(\alpha)]$ the maximum of $\sigma^{(i)}$ is in last position.
\item For each $k\in[\ell(\beta)]$ the subwords $\sigma^{(i)},\dots,\sigma^{(j)}$ 
are sorted increasingly with respect to their maximal elements, where $i,j\in[\ell(\alpha)]$ are determined by
\[
\bigcup_{r=i}^j\compb{r}{\alpha}=\compb{k}{\beta}
\,.
\]
\end{enumerate}
Let $\CONS{\alpha}{\beta}$ denote the set of permutations $\sigma \in \symS_n$ that are consistent with $\alpha\leq\beta$.

\begin{example}\label{ex:cons}
Let $\sigma=438756219$, $\alpha=12123$ and $\beta=315$.
We shall see that $\sigma \in \CONS{\alpha}{\beta}$.
We separate $\beta$-blocks by vertical lines and put $\alpha$-blocks into parentheses:
\[
(4)(38)|(7)|(56)(219)
\]
In each $\alpha$-block the maximum is in last position.
Moreover the maxima are increasing within each $\beta$-block:
\[
4<8\mid7\mid6<9
\]
Thus $\sigma\in\CONS{\alpha}{\beta}$.
On the other hand
\[
(4)(38)|(7)|(65)(219)
\notin\CONS{\alpha}{\beta}
\]
because the maximum of $\sigma^{(4)}=65$ is not in last position.
Similarly
\[
(4)(38)|(7)|(59)(216)
\notin\CONS{\alpha}{\beta}
\]
because the maxima in the third $\beta$-block are not in increasing order.
\end{example}

The definitions of both $\alpha$-unimodal permutations and consistent permutations are somewhat out of the blue at first glance, however, these two concepts interplay in an interesting fashion.
The following theorem will allow us to expand Gessel's fundamental basis into quasisymmetric power sums in \cref{sec:gessel}.

\begin{theorem}\label{thm:CONS}
Let $n\in\setN$ and $\beta,\gamma$ be compositions of $n$.
Then
\begin{equation}\label{eq:CONS}
\sum_{\alpha}
|\CONS{\alpha}{\beta}|\, (-1)^{|\compset{\gamma}\setminus \compset{\alpha}|}
=
\begin{cases}
n!&\quad \text{if }\beta \leq \gamma,\\
0&\quad \text{otherwise,}
\end{cases}
\end{equation}
where the sum ranges over all compositions $\alpha$ of $n$ such that $\alpha\leq\beta$ and $\compset{\gamma}$ is $\alpha$-unimodal.
\end{theorem}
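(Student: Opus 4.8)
The plan is to prove \cref{thm:CONS} by analyzing the sum on the left-hand side as a signed count over permutations, organized by a suitable statistic, and then identifying an involution (or a telescoping cancellation) that kills the terms unless $\beta\leq\gamma$. First I would fix $\beta$ and a permutation $\sigma\in\symS_n$, and ask: for which compositions $\alpha$ with $\alpha\leq\beta$ and $\compset{\gamma}$ $\alpha$-unimodal does $\sigma\in\CONS{\alpha}{\beta}$? The key observation is that consistency with $\alpha\leq\beta$ is a condition that, for a \emph{fixed} $\sigma$, is satisfied by a very structured family of $\alpha$'s. Namely, within each $\beta$-block, being consistent forces the $\alpha$-refinement to cut that block precisely at the left-to-right maxima of $\sigma$ restricted to the block (so that each $\alpha$-sub-block ends at its maximum and the sub-block maxima increase). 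Thus for fixed $\sigma$ the set of admissible $\alpha$ (ignoring the unimodality constraint on $\compset{\gamma}$) is an interval in the refinement order, of the form $\{\alpha : \alpha_0 \leq \alpha \leq \beta\}$ where $\compset{\alpha_0}$ is the union over $\beta$-blocks of the ``left-to-right maximum'' positions. Equivalently $\compset{\alpha}$ must contain a certain forced set $D_\sigma\subseteq\compset{\beta}$-compatible positions and be contained in $\compset{\beta}$.

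Next I would reorganize:
\[
\sum_{\alpha}|\CONS{\alpha}{\beta}|\,(-1)^{|\compset{\gamma}\setminus\compset{\alpha}|}
=
\sum_{\sigma\in\symS_n}\ \sum_{\substack{\alpha\,:\,\sigma\in\CONS{\alpha}{\beta}\\ \compset{\gamma}\in U_\alpha}}(-1)^{|\compset{\gamma}\setminus\compset{\alpha}|}.
\]
For fixed $\sigma$ the inner sum runs over $\alpha$ in an interval of the Boolean lattice of subsets of $[n-1]$ — intersected with the sublattice $U_{\!\cdot}$ condition coming from $\compset{\gamma}\in U_\alpha$, i.e. $\alpha\in V_{\gamma}$ in the notation of \cref{lem:unimodal}~\eqref{item:unimodal_Va}. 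The sign $(-1)^{|\compset{\gamma}\setminus\compset{\alpha}|}$ is a ``partial inclusion–exclusion'' weight. The plan is to evaluate this inner signed sum combinatorially: it should be $0$ unless the forced interval of $\alpha$'s is a single point $\alpha=\beta$ with $\compset{\gamma}\supseteq\compset{\beta}$ available in the right way, in which case the sign is $(-1)^{|\compset{\gamma}\setminus\compset{\beta}|}$ — but wait, we want the grand total to be $n!$ with \emph{no} sign, so the signs must themselves cancel across a second layer. So more carefully: I expect the inner sum over $\alpha$, for fixed $\sigma$, to vanish whenever $\beta\not\leq\gamma$, and when $\beta\leq\gamma$ to collapse (after the alternating sum over $\alpha\in[\alpha_0,\beta]\cap V_\gamma$) to exactly $[\![\,\sigma\ \text{has no descents outside}\ \compset{\gamma}\setminus\compset{\beta}\ \text{forced}\dots]\!]$ — at which point summing over all $\sigma\in\symS_n$ gives $n!$. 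The cleanest route is probably: first do the alternating sum over $\alpha\supseteq\alpha_0$, $\alpha\subseteq\beta$ \emph{without} the $V_\gamma$ restriction using a product/telescoping identity $\sum_{\alpha_0\le\alpha\le\beta}(-1)^{|\compset{\gamma}\setminus\compset{\alpha}|}$, and observe it is zero unless $\compset{\alpha_0}=\compset{\beta}$ (i.e. $\alpha_0=\beta$), which happens iff $\sigma$ restricted to each $\beta$-block is increasing — forcing $\DES(\sigma)\subseteq\compset{\beta}\subseteq\compset{\gamma}$ when $\beta\leq\gamma$; then the $V_\gamma$/$\alpha$-unimodality constraint must be shown to be automatically satisfied in exactly these surviving cases, and the number of such $\sigma$ is $n!/(\prod \text{block sizes})\cdot(\text{reordering within blocks})=n!$ after one realizes every $\sigma$ arises uniquely. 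Actually the count of $\sigma\in\symS_n$ increasing on each $\beta$-block is $\binom{n}{\beta}$, not $n!$, so the telescoping must instead leave a \emph{weighted} contribution; the right statement is that the inner sum over $\alpha$ equals the number of ways the left-to-right maxima structure of $\sigma$ on $\beta$-blocks is compatible, summed so as to produce $n!$ — I would verify this by a direct bijective/recursive argument rather than pure Möbius inversion.

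The main obstacle will be correctly handling the interaction between the two constraints on $\alpha$: being in the refinement interval $[\alpha_0(\sigma),\beta]$ (from consistency) and satisfying $\compset{\gamma}\in U_\alpha$ (so $\alpha\in V_\gamma$). Individually each is a nice lattice, but their intersection together with the alternating sign is where the real work lies; I expect to need \cref{lem:unimodal}~\eqref{item:unimodal_Moebius} (the Möbius function of $U_\alpha$) or the explicit product structure of $U_\alpha$ to collapse the signed sum. A cleaner alternative I would try in parallel: prove \cref{thm:CONS} directly by a sign-reversing involution on the set of pairs $(\sigma,\alpha)$ — given such a pair with $\beta\not\leq\gamma$, pick the smallest ``bad'' position witnessing $\beta\not\leq\gamma$ (some $k\in\compset{\beta}\setminus\compset{\gamma}$) and toggle whether $k\in\compset{\alpha}$, checking this preserves consistency and $\alpha$-unimodality of $\compset{\gamma}$ while flipping the sign; when $\beta\leq\gamma$, a similar involution cancels all pairs except those with $\alpha=\gamma$ (or a canonical representative), and a separate direct count of the fixed points yields $n!$. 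The delicate point in either approach is checking that toggling membership of a single element in $\compset{\alpha}$ respects the consistency conditions (i) and (ii) — this requires understanding how splitting or merging one pair of adjacent $\alpha$-blocks interacts with the ``maximum in last position'' and ``maxima increasing'' requirements, and is precisely where I would spend the most care.
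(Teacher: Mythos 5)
Your central reduction fails. You fix $\sigma$ and hope that the inner alternating sum over admissible $\alpha$ vanishes for every $\sigma$ whenever $\beta\nleq\gamma$, but this is false: take $n=3$, $\beta=(3)$, $\gamma=(2,1)$ and $\sigma=123$. The compositions $\alpha\leq\beta$ with $\compset{\gamma}=\{2\}$ $\alpha$-unimodal and $\sigma\in\CONS{\alpha}{\beta}$ are $(2,1)$, $(1,2)$, $(1,1,1)$, contributing $+1-1+1=+1$, not $0$; the theorem only holds because $\sigma=213$ contributes $-1$ through $\alpha=(1,2)$. So the cancellation necessarily moves between different permutations, and no amount of M\"obius-function bookkeeping on the interval $[\alpha_0(\sigma),\beta]$ intersected with $V_\gamma$ can rescue a fixed-$\sigma$ argument. (Your structural observation that, for fixed $\sigma$, the consistent $\alpha$ form a Boolean interval determined by left-to-right maxima is essentially correct, but it does not carry the proof.) The paper handles exactly this obstruction: with $i$ the \emph{minimal} element of $\compset{\gamma}\setminus\compset{\beta}$, it either pairs $\alpha$ with $\alpha\cup\{i\}$ (when $i-1\in\compset{\beta}^0$, same permutations, opposite signs), or groups compositions in triples $\{T\cup\{i-1\},T\cup\{i\},T\cup\{i-1,i\}\}$ and uses a bijection that in one branch composes $\sigma$ with the transposition $(i-1,i)$ --- i.e.\ it changes the permutation, which is the ingredient your plan lacks.

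Your backup involution is also mis-specified on two counts. First, $\beta\nleq\gamma$ means $\compset{\gamma}\not\subseteq\compset{\beta}$, so the witness must be taken in $\compset{\gamma}\setminus\compset{\beta}$, not in $\compset{\beta}\setminus\compset{\gamma}$ as you propose; toggling an element of $\compset{\beta}$ out of $\compset{\alpha}$ destroys $\alpha\leq\beta$, and toggling any $k\notin\compset{\gamma}$ leaves the sign $(-1)^{|\compset{\gamma}\setminus\compset{\alpha}|}$ unchanged, so the map is neither well defined nor sign-reversing. Second, in the case $\beta\leq\gamma$ there is no cancellation to organize at all: $\alpha\leq\beta\leq\gamma$ forces $\compset{\gamma}\subseteq\compset{\alpha}$, so every sign is $+1$ (your worry about a ``second layer'' of cancelling signs, and your guess that fixed points concentrate at $\alpha=\gamma$, are both off). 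What is actually needed there is the identity $\sum_{\alpha\leq\beta}|\CONS{\alpha}{\beta}|=n!$, which the paper proves by the ``erase the parentheses'' bijection (reverse each $\alpha$-subword, as in passing between cycle and one-line notation); your proposal only gestures at ``a direct bijective/recursive argument'' and never supplies this count.
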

\begin{proof}
Let $R(\beta,\gamma)$ denote the set of all compositions $\alpha$ of $n$ such that $\alpha\leq\beta$ and $\compset{\gamma}$ is $\alpha$-unimodal.

\noindent
\textbf{Case 1.}
First assume $\beta\leq\gamma$.
Then $\compset{\gamma}$ is $\alpha$-unimodal for all $\alpha\leq\beta$ by \cref{lem:unimodal}~\eqref{item:unimodal_leq}.
Moreover $\compset{\gamma}\setminus\compset{\alpha}=\emptyset$ for all $\alpha\in R(\beta,\gamma)$.
Thus it suffices to give a bijection
\[
\varphi:\bigsqcup_{\alpha\leq\beta}\CONS{\alpha}{\beta}\to\symS_{n}\,.
\]
This is accomplished simply by reversing each subword $\sigma^{(i)}$.
The same idea appears in the well-known method for switching between cycle notation 
and one line notation by forgetting the parentheses, see~\cite[Sec.~1.3]{StanleyEC1}.
Note that $\varphi^{-1}$ depends on $\beta$.

For example, consider $\beta=3$ when $\alpha$ varies over all compositions of $3$.
\[
\begin{matrix}
(123)&\to&321\\
(213)&\to&312\\
(12)(3)&\to&213\\
\end{matrix}
\qquad\qquad
\begin{matrix}
(1)(23)&\to&132\\
(2)(13)&\to&231\\
(1)(2)(3)&\to&123\\
\end{matrix}
\]
\medskip

\noindent
\textbf{Case 2.}
On the other hand, if $\beta\nleq\gamma$ then let $i$ be the minimal element of $\compset{\gamma}\setminus\compset{\beta}$.

\noindent
If $i-1\in\compset{\beta}^0$ then $i-1\in\compset{\alpha}^0$ for all $\alpha\in R(\beta,\gamma)$.
Moreover the set $R(\beta,\gamma)$ is partitioned into pairs $\{\alpha,\alpha'\}$ such that $\compset{\alpha'}=\compset{\alpha}\cup\{i\}$ and $\compset{\alpha}=\compset{\alpha'}\setminus\{i\}$.
We claim that
\[
\CONS{\alpha}{\beta}
=\CONS{\alpha'}{\beta}.
\]
This becomes clear via the correspondence
\[
\dotsm)|(\sigma_i\sigma_{i+1}\dotsm\sigma_j)\dotsm
\qquad\longleftrightarrow\qquad
\dotsm)|(\sigma_i)(\sigma_{i+1}\dotsm\sigma_j)\dotsm
\]
where we use the notation of \cref{ex:cons}.
Moreover,
\[
(-1)^{|\compset{\gamma}\setminus\compset{\alpha}|}
=-(-1)^{|\compset{\gamma}\setminus\compset{\alpha'}|}
\,.
\]
Thus the left hand side of \eqref{eq:CONS} vanishes.

If $i-1\notin\compset{\beta}^0$ then $i-1\notin\compset{\gamma}$ by minimality of $i$.
Thus for $\compset{\gamma}$ to be $\alpha$-unimodal we must have $i-1\in\compset{\alpha}$ or $i\in\compset{\alpha}$ by \cref{lem:unimodal}~\eqref{item:unimodal_iff}.
In fact, the set $R(\beta,\gamma)$ is partitioned into sets of three, $\{\alpha,\alpha',\alpha''\}$, such that
\begin{align*}
\compset{\alpha}
=
T\cup\{i-1\}
\,,
&&
\compset{\alpha'}
=
T\cup\{i\}
\,,
&&
\compset{\alpha''}
=
T\cup\{i-1,i\}
\end{align*}
for some $T\subseteq [n-1]\setminus \{i-1,i\}$. 
We claim that there exists a bijection
\[
\varphi:\CONS{\alpha}{\beta}\to
\CONS{\alpha'}{\beta}\sqcup
\CONS{\alpha''}{\beta}
\,.
\]
Define the map $\varphi$ as follows.
For $\sigma\in\CONS{\alpha}{\beta}$ set
\begin{equation*}
\varphi(\sigma)
\coloneqq
\begin{cases}
\sigma\circ(i-1,i)\in\CONS{\alpha'}{\beta}&\quad\text{if }\sigma_{i-1}>\sigma_i,\\
\sigma\in\CONS{\alpha''}{\beta}&\quad\text{if }\sigma_{i-1}<\sigma_i.
\end{cases}
\end{equation*}
With the notation from \cref{ex:cons} this amounts to the following:
\[
\dotsm(\sigma_h\dotsm\sigma_{i-1})(\sigma_{i}\sigma_{i+1}\dotsm\sigma_j)\dotsm
\quad\longleftrightarrow\quad
\begin{cases}
\dotsm(\sigma_h\dotsm\sigma_{i}\sigma_{i-1})(\sigma_{i+1}\dotsm\sigma_j)\dotsm\\
\dotsm(\sigma_h\dotsm\sigma_{i-1})(\sigma_{i})(\sigma_{i+1}\dotsm\sigma_j)\dotsm
\end{cases}
\]
The claim follows from
\[
(-1)^{|\compset{\gamma}\setminus\compset{\alpha}|} =
-(-1)^{|\compset{\gamma}\setminus\compset{\alpha'}|}
=-(-1)^{|\compset{\gamma}\setminus\compset{\alpha''}|}
\,.\qedhere
\]
\end{proof}

We now turn to labeled posets, for which we adopt the same conventions as in \cite{StanleyEC1}.
A \emph{labeled poset} $(P,w)$ is a finite poset $P$ equipped with a bijection $w:P\to[n]$.
We call $(P,w)$ a \emph{naturally labeled poset} if $w$ is order-preserving, that is, $w(\poseta)<w(\posetb)$ for all $\poseta,\posetb\in P$ with $\poseta<_P \posetb$.

The \emph{Jordan--Hölder set} of a labeled poset $(P,w)$ with $n$ elements is defined as
\[
\ILinExt(P,w) \coloneqq \big\{ \sigma \in \symS_n : \sigma^{-1}\circ w(\poseta) < \sigma^{-1}\circ w(\posetb) \text{ for all }\poseta,\posetb\in P\text{ with }\poseta<_P \posetb\big\}.
\]
That is, $\sigma\in\ILinExt(P,w)$ if and only if $\sigma^{-1}\circ w$ is a linear extension of $P$.

To avoid ambiguity we refer to the values $w(\poseta)$, where $\poseta\in P$, as \emph{labels}.
Other functions $f:P\to\setN$ (such as $\sigma^{-1}\circ w$, where $\sigma\in\ILinExt(P,w)$) we sometimes call \emph{colorings} and their values $f(\poseta)$ \emph{colors}.
With this convention the elements $\sigma\in\ILinExt(P,w)$ map colors to labels.

\begin{example}\label{ex:linExtExamples}
For example, let $(P,w)$ be the labeled poset below:
\begin{align*}
(P,w)=
\begin{tikzpicture}[xscale=0.8,yscale=0.8,baseline=25pt,
circ/.style={circle,draw,inner sep=0.8pt, minimum width=4pt}
]
\node[circ] (1) at ( 0, 0) {$1$};
\node[circ] (2) at ( 1, 0) {$2$};
\node[circ] (3) at ( 0, 1.6) {$3$};
\node[circ] (4) at ( 1, 1.6) {$4$};
\node[circ] (5) at ( 0.5, 3.2) {$5$};
\draw[black,thick] (4)--(1)--(3)--(5)--(4)--(2);
\end{tikzpicture}
&&
\ILinExt(P,w) = 
\begin{tikzpicture}[xscale=0.8,yscale=0.8,baseline=25pt,
circ/.style={circle,draw,inner sep=0.8pt, minimum width=4pt}
]
\node[circ] (1) at ( 0, 0) {$1$};
\node[circ] (2) at ( 1, 1.6) {$2$};
\node[circ] (3) at ( 0, 0.8) {$3$};
\node[circ] (4) at ( 1, 2.4) {$4$};
\node[circ] (5) at ( 0.5, 3.2) {$5$};
\draw[black,thick] (4)--(1)--(3)--(5)--(4)--(2);
\node (6) at ( 0.5,-1.0) {$13245$};
\end{tikzpicture},\;
\begin{tikzpicture}[xscale=0.8,yscale=0.8,baseline=25pt,
circ/.style={circle,draw,inner sep=0.8pt, minimum width=4pt}
]
\node[circ] (1) at ( 0, 0) {$1$};
\node[circ] (2) at ( 1, 0.8) {$2$};
\node[circ] (3) at ( 0, 1.6) {$3$};
\node[circ] (4) at ( 1, 2.4) {$4$};
\node[circ] (5) at ( 0.5, 3.2) {$5$};
\draw[black,thick] (4)--(1)--(3)--(5)--(4)--(2);
\node (6) at ( 0.5,-1.0) {$12345$};
\end{tikzpicture},\;
\begin{tikzpicture}[xscale=0.8,yscale=0.8,baseline=25pt,
circ/.style={circle,draw,inner sep=0.8pt, minimum width=4pt}
]
\node[circ] (1) at ( 0, 0.8) {$1$};
\node[circ] (2) at ( 1, 0.0) {$2$};
\node[circ] (3) at ( 0, 1.6) {$3$};
\node[circ] (4) at ( 1, 2.4) {$4$};
\node[circ] (5) at (0.5,3.2) {$5$};
\draw[black,thick] (4)--(1)--(3)--(5)--(4)--(2);
\node (6) at ( 0.5,-1.0) {$21345$};
\end{tikzpicture},\;
\begin{tikzpicture}[xscale=0.8,yscale=0.8,baseline=25pt,
circ/.style={circle,draw,inner sep=0.8pt, minimum width=4pt}
]
\node[circ] (1) at ( 0, 0) {$1$};
\node[circ] (2) at ( 1, 0.8) {$2$};
\node[circ] (3) at ( 0, 2.4) {$3$};
\node[circ] (4) at ( 1, 1.6) {$4$};
\node[circ] (5) at ( 0.5, 3.2) {$5$};
\draw[black,thick] (4)--(1)--(3)--(5)--(4)--(2);
\node (6) at ( 0.5,-1.0) {$12435$};
\end{tikzpicture},\;
\begin{tikzpicture}[xscale=0.8,yscale=0.8,baseline=25pt,
circ/.style={circle,draw,inner sep=0.8pt, minimum width=4pt}
]
\node[circ] (1) at ( 0, 0.8) {$1$};
\node[circ] (2) at ( 1, 0.0) {$2$};
\node[circ] (3) at ( 0, 2.4) {$3$};
\node[circ] (4) at ( 1, 1.6) {$4$};
\node[circ] (5) at ( 0.5, 3.2) {$5$};
\draw[black,thick] (4)--(1)--(3)--(5)--(4)--(2);
\node (6) at ( 0.5,-1.0) {$21435$};
\end{tikzpicture}
\end{align*}
Thus $\ILinExt(P,w)  = \{13245,12345,21345,12435,21435\}$.
\end{example}

Given a labeled poset $(P,w)$ with $n$ elements and a composition $\alpha$ of $n$ with $\ell$ parts, let
\[
\ILinExt_{\alpha}(P,w)\coloneqq
\{ \sigma \in \ILinExt(P,w) : \sigma\text{ is $\alpha$-unimodal}\}.
\]
Furthermore, given $\sigma\in\symS_n$, define the subposets
\begin{equation}\label{eq:subposet}
\compp{i}{\alpha}(\sigma)\coloneqq
\{w^{-1}(\sigma_j):j\in\compb{i}{\alpha}\}\subseteq P
\,.
\end{equation}
Finally denote by $\ILinExt^\ast_{\alpha}(P,w)$ the subset of $\ILinExt_{\alpha}(P,w)$ 
that consists of the elements $\sigma\in\ILinExt_{\alpha}(P,w)$ such 
that $\compp{i}{\alpha}(\sigma)$ contains a unique minimal element for all $i\in[\ell]$.

\begin{example}*\label[example]{ex:ILinExtast}
Let $(P,w)$ be as in \cref{ex:linExtExamples}. 
If $\alpha = (2,3)$, then 
\[
\ILinExt_{23}(P,w) = \{
\hat{1}3|\hat{2}45, 
\hat{1}\hat{2}|\hat{3}\hat{4}5, 
\hat{2}\hat{1}|\hat{3}\hat{4}5, 
\hat{1}\hat{2}|\hat{4}\hat{3}5,
\hat{2}\hat{1}|\hat{4}\hat{3}5
\}
\quad\text{and}\quad
\ILinExt^\ast_{23}(P,w) = \{13245\}
,
\]
where we have marked the labels of the minimal elements in each subposet.
Similarly, if $\alpha=(4,1)$ then
\[
\ILinExt_{41}(P,w) = \{
\hat{1}\hat{2}34|\hat{5}, 
\hat{2}\hat{1}34|\hat{5}
\}
\quad\text{and}\quad
\ILinExt^\ast_{41}(P,w) = \emptyset.
\qeddefhere
\]
\end{example}

The set $\ILinExt_{\alpha}^{\ast}(P,w)$ 
associated to a naturally labeled poset, turns out to be a highly useful concept in the study of power sum symmetric functions.
The following lemma gives a necessary condition for a permutation $\sigma$ to lie in $\ILinExt_{\alpha}^{\ast}(P,w)$.

\begin{lemma}\label[lemma]{lemma:no_descents}
Let $(P,w)$ be a naturally labeled poset with $n$ elements, let $\alpha$ 
be a composition of $n$, and let $\sigma\in\ILinExt_{\alpha}^{\ast}(P,w)$.
Then $\DES(\sigma)\subseteq\compset{\alpha}$.
\end{lemma}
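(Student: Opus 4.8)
The plan is to argue by contradiction: suppose $\sigma\in\ILinExt_{\alpha}^{\ast}(P,w)$ has a descent at some position $k\notin\compset{\alpha}$, so $\sigma_k>\sigma_{k+1}$ with $k$ not a block boundary. Then $k$ and $k+1$ lie in the same block $\compb{i}{\alpha}$, and the two elements $u\coloneqq w^{-1}(\sigma_k)$ and $v\coloneqq w^{-1}(\sigma_{k+1})$ both lie in the subposet $\compp{i}{\alpha}(\sigma)$. Because $\sigma$ is a Jordan--H\"older element, we have $\sigma^{-1}\circ w$ order-preserving; since $w$ is a \emph{natural} labeling, $\sigma_k>\sigma_{k+1}$ forces $w(u)>w(v)$, and the natural labeling then rules out $u<_P v$ but leaves open that $u$ and $v$ are incomparable — so on its own a single descent is not yet a contradiction. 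The real point is to exploit the $\alpha$-unimodality of $\sigma$ together with the ``unique minimal element'' condition.

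First I would recall that, since $\sigma$ is $\alpha$-unimodal, the restriction of $\sigma$ to the block $\compb{i}{\alpha}=[a,b]$ is a unimodal word $\sigma_a>\dotsm>\sigma_m<\dotsm<\sigma_b$ for some $m\in[a,b]$. A descent at $k\in[a,b-1]$ with $k\notin\compset{\alpha}$ means the descending run of this unimodal word has length at least $2$, i.e. $a<m$ (equivalently $k<m$ or $k=m$ but $\sigma_{k}>\sigma_{k+1}$ already forces $k<m$, so $a\le k<m$). I want to locate the label $w(v_0)$ of the unique minimal element $v_0$ of $\compp{i}{\alpha}(\sigma)$. The elements of this subposet, read in the order $j=a,\dotsc,b$, receive colors $a,a+1,\dotsc,b$ under $\sigma^{-1}\circ w$ (because $\sigma_j=w(\text{element at color }j)$), so the minimal element $v_0$ gets some color $c\in[a,b]$; being minimal and $(P,w)$ naturally labeled does not immediately pin down $c$, but being the \emph{unique} minimal element of a poset on the color set $\{a,\dots,b\}$ for which $\sigma^{-1}\circ w$ restricted to the block is a valid partial linear extension — here is where I'd push: the unique minimal element of $\compp{i}{\alpha}(\sigma)$ must be the one that $\sigma^{-1}\circ w$ sends to the smallest color among the block, namely color $a$, because any element below everything else in the subposet can be listed first without violating the linear-extension property, and conversely the color-$a$ element has no predecessor in the block.

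With $v_0$ identified as the color-$a$ element, its label is $w(v_0)=\sigma_a$, which is the \emph{maximum} of the descending run and hence $\sigma_a>\sigma_k>\sigma_{k+1}$. Now take the element $v=w^{-1}(\sigma_{k+1})$: it lies in $\compp{i}{\alpha}(\sigma)$, it is not minimal (as $v_0$ is the unique minimal element and $v\ne v_0$ since $\sigma_{k+1}<\sigma_a=w(v_0)$), so there is some $u'\in\compp{i}{\alpha}(\sigma)$ with $u'<_P v$. Since $(P,w)$ is naturally labeled, $w(u')<w(v)=\sigma_{k+1}$, so the color of $u'$, call it $j'\in[a,b]$, satisfies $\sigma_{j'}=w(u')<\sigma_{k+1}$. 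But $u'<_P v$ and $\sigma\in\ILinExt(P,w)$ means $\sigma^{-1}\circ w(u')<\sigma^{-1}\circ w(v)$, i.e. $j'<k+1$, so $j'\le k$. Thus among positions $a,\dots,k$ the value $\sigma_{j'}$ is strictly less than $\sigma_{k+1}$; but positions $a,\dots,k$ form a strictly \emph{decreasing} run in the unimodal block word with $\sigma_k$ its smallest entry, so $\sigma_{j'}\ge\sigma_k>\sigma_{k+1}$, a contradiction. This contradiction shows no such descent position $k$ exists, i.e. $\DES(\sigma)\subseteq\compset{\alpha}$.

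The main obstacle I anticipate is justifying cleanly that the unique minimal element of $\compp{i}{\alpha}(\sigma)$ must be the color-$a$ element (equivalently, that its label is $\sigma_a$, the block maximum). This needs the observation that $\sigma^{-1}\circ w$ restricted to $\compp{i}{\alpha}(\sigma)$ assigns the colors $a,a+1,\dots,b$ bijectively and is a linear extension of the induced subposet, combined with the fact that in a linear extension the first-listed element is minimal, while uniqueness of the minimal element forces it to be listed first in \emph{every} linear extension — in particular in this one. Once that identification is in place, the rest is the short run-length comparison above, which only uses $\alpha$-unimodality and the naturality of $w$. I would phrase this key step as a one-line remark and then give the contradiction argument in the two displayed chains of inequalities above.
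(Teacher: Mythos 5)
Your proof is correct and rests on the same ingredients as the paper's: comparabilities inside a block give noninversions in both label and color (naturality of $w$ plus the Jordan--H{\"o}lder property), the unique minimal element of $\compp{i}{\alpha}(\sigma)$ must occupy the first position of the block, and $\alpha$-unimodality then excludes any descent interior to the block. The paper argues directly (the minimal element's label $\sigma_a$ is smaller than every other label in the block, so the unimodal block word is increasing) while you argue by contradiction via a predecessor of the element in position $k+1$, but this is essentially the same approach.
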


\begin{proof}
Let $i\in[\ell(\alpha)]$ and $\compb{i}{\alpha}=[a,b]$.
Suppose $\poseta,\posetb\in\compp{i}{\alpha}(\sigma)$ with $\poseta<_Py$.
Then $w(\poseta)<w(\posetb)$ since $w$ is natural, and $\sigma^{-1}\circ w(\poseta)<\sigma^{-1}\circ w(\posetb)$ by definition of $\ILinExt(P,w)$.
Consequently the pair $(\sigma^{-1}\circ w(\poseta),\sigma^{-1}\circ w(\posetb))$ is a noninversion of $\sigma$.
Since $\compp{i}{\alpha}(\sigma)$ has a unique minimal element we obtain $\sigma_a<\sigma_j$ for all $j\in[a+1,b]$.
Thus by $\alpha$-unimodality
$\sigma_{a}<\dotsm<\sigma_{b}$.
\end{proof}

\cref{ex:ILinExtast} shows that $\DES(\sigma)\subseteq\compset{\alpha}$ is not a sufficient condition for $\sigma\in\ILinExt_{\alpha}^{\ast}(P,w)$.

We now come to the second main result of this section.
The following theorem will be used to prove that several families of quasisymmetric or symmetric functions expand positively into power sums.

\begin{theorem}\label{thm:ILinExt}
Let $(P,w)$ be a naturally labeled poset 
with $n$ elements, and let $\alpha$ be a composition of $n$.
Then
\begin{equation}\label{eq:L_sum}
\sum_{\sigma \in \ILinExt_{\alpha}(P,w)}
(-1)^{|\DES(\sigma)\setminus \compset{\alpha}|}
=|\ILinExt^\ast_{\alpha}(P,w)|
\,.
\end{equation}
In particular, the left-hand side of \refq{L_sum} is nonnegative.
\end{theorem}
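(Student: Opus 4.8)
The plan is to construct a sign-reversing involution on the set $\ILinExt_{\alpha}(P,w)$ whose fixed points are exactly the elements of $\ILinExt^\ast_{\alpha}(P,w)$, and such that every non-fixed point $\sigma$ is paired with a permutation $\tau$ satisfying $|\DES(\tau)\setminus\compset{\alpha}| = |\DES(\sigma)\setminus\compset{\alpha}| \pm 1$. Once such an involution is in hand, the contributions of the two elements of each non-trivial orbit cancel in the alternating sum on the left-hand side of \refq{L_sum}, leaving only the contribution of the fixed points; and each fixed point $\sigma\in\ILinExt^\ast_{\alpha}(P,w)$ satisfies $\DES(\sigma)\subseteq\compset{\alpha}$ by \cref{lemma:no_descents}, so it contributes $(-1)^0=1$. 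This yields the identity, and nonnegativity is then immediate.

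The key step is the construction of the involution. Given $\sigma\in\ILinExt_{\alpha}(P,w)\setminus\ILinExt^\ast_{\alpha}(P,w)$, some subposet $\compp{i}{\alpha}(\sigma)$ fails to have a unique minimal element; I would choose $i$ minimal with this property. Inside the block $\compb{i}{\alpha}=[a,b]$ the $\alpha$-unimodal word $\sigma_a\dotsm\sigma_b$ has the shape $\sigma_a>\dotsm>\sigma_k<\dotsm<\sigma_b$ for some $k$, and the failure of uniqueness of the minimal element should be localized at the "turning point": the plan is to toggle the adjacency of the two smallest colors in the block by swapping $\sigma_k$ with one of its neighbours (whichever of $\sigma_{k-1}$, $\sigma_{k+1}$ is smaller, so that the swap keeps the word $\alpha$-unimodal) precisely when the two positions being swapped correspond to poset elements that are incomparable in $P$. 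Swapping two adjacent entries that straddle the minimum of the block creates or destroys exactly one descent, and that descent lies inside the block $\compb{i}{\alpha}$ but, being at an interior position of the block, is not an element of $\compset{\alpha}$ — so the cardinality $|\DES(\sigma)\setminus\compset{\alpha}|$ changes by exactly one. One must check that such a swap is always possible: since $\compp{i}{\alpha}(\sigma)$ has at least two minimal elements, the two smallest colors in the block index incomparable elements of $P$, so transposing them preserves membership in $\ILinExt(P,w)$; that it preserves $\alpha$-unimodality in the block, and leaves all other blocks untouched; that the minimality of $i$ is preserved, so the same $i$ is selected for $\tau$; and that applying the rule again returns $\sigma$, i.e. the map is an honest involution.

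The main obstacle I anticipate is making the local move well-defined and genuinely involutive: I need a canonical way to single out which adjacent pair to swap so that the construction is symmetric in $\sigma$ and $\tau$, and so that after the swap the "defect block" is still the $i$-th one and the defect is still detected at the same location. This is a familiar but delicate issue — it is essentially the combinatorics behind Ellzey's involution in \cite[Thm.~4.1]{Ellzey2016}, on which the paper says this argument is modelled. Concretely, I expect the right bookkeeping is: let $m_1<m_2$ be the two smallest colors appearing in $\compp{i}{\alpha}(\sigma)$; because the block word is $\alpha$-unimodal, $m_1=\sigma_j$ for some $j$ and $m_2$ occupies one of the positions $j-1$ or $j+1$; the involution transposes these two positions, which is legal exactly when $w^{-1}(\sigma_{j-1})$ or $w^{-1}(\sigma_{j+1})$ (as appropriate) is incomparable in $P$ to $w^{-1}(\sigma_j)$, and one shows the non-unique-minimum condition forces incomparability. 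Verifying that this transposition is self-inverse and does not migrate the defect to an earlier block is the crux; after that the sign analysis and the appeal to \cref{lemma:no_descents} for the fixed points are routine.
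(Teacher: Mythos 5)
Your global strategy is the same as the paper's: a sign-reversing involution on $\ILinExt_{\alpha}(P,w)\setminus\ILinExt^{\ast}_{\alpha}(P,w)$, with the defect located in the block of minimal index $i$, and \cref{lemma:no_descents} handling the fixed points. The gap is in the local move itself. You propose an adjacent transposition of the two smallest labels of the defective block, and this hinges on the claim that non-uniqueness of the minimal element of $\compp{i}{\alpha}(\sigma)$ forces these two labels to sit on incomparable elements of $P$. That claim is false, and without it your map is undefined (or acquires spurious fixed points) exactly where cancellation is needed. Concretely, let $P$ have elements $u<_P v$ and $z$ incomparable to both, naturally labeled by $w(u)=1$, $w(v)=2$, $w(z)=3$, and take $\alpha=(3)$. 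Then $\ILinExt_{(3)}(P,w)=\{123,\,312\}$ and $\ILinExt^{\ast}_{(3)}(P,w)=\emptyset$, so the two permutations must cancel each other (their signs are $+1$ and $-1$). But in both words the two smallest labels are $1$ and $2$, which lie on the comparable pair $u<_P v$; swapping them destroys the linear-extension property ($312\mapsto 321\notin\ILinExt(P,w)$, $123\mapsto 213\notin\ILinExt(P,w)$), and the only other adjacent swaps produce $132$, which is not unimodal. So no adjacent transposition can realize the required pairing: $123$ and $312$ differ by a $3$-cycle, and the cancellation is intrinsically non-local.

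The paper's involution differs precisely at this point. With $i$ and $\compb{i}{\alpha}=[a,b]$ as above, it sets $M$ to be the \emph{largest} label of a minimal element of $\compp{i}{\alpha}(\sigma)$, lets $[j,m]$ be the (contiguous, by unimodality) set of positions in the block whose entries are at most $M$, and applies the cyclic rotation $\sigma\circ(j,\dotsc,m)$ or its inverse according to whether $M=\sigma_j$ or $M=\sigma_m$; this moves $M$ across the entire run of smaller values (in the example it exchanges $123$ and $312$). The linear-extension property survives because $M$ labels a minimal element of the subposet, so every element of $P$ lying above $w^{-1}(M)$ has its label in a position beyond $m$; the two cases exchange roles because $M>\sigma_k$ when there are at least two minimal elements, which makes the map an involution, and each application changes $|\DES(\sigma)\setminus\compset{\alpha}|$ by exactly one. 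Your sign bookkeeping and the treatment of fixed points via \cref{lemma:no_descents} are fine, but the adjacent-swap kernel of your argument does not work and must be replaced by a move of this rotation type.
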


\begin{proof}
We prove the theorem by the use of a sign-reversing involution
\[
\varphi:\ILinExt_{\alpha}(P,w)\setminus\ILinExt_{\alpha}^{\ast}(P,w)\to\ILinExt_{\alpha}(P,w)\setminus\ILinExt_{\alpha}^{\ast}(P,w)
\]
similar to an involution due to B.~Ellzey~\cite[Thm.~4.1]{Ellzey2016}.

Let $\ell$ be the number of parts of $\alpha$, and let $\sigma\in\ILinExt_{\alpha}(P,w)\setminus\ILinExt_{\alpha}^{\ast}(P,w)$.
Then there exists a minimal index $i\in[\ell]$ such that $\compp{i}{\alpha}(\sigma)$ has at least two minimal elements.
Suppose $\compb{i}{\alpha}=[a,b]$.
Since $\sigma$ is $\alpha$-unimodal there exists $k\in\compb{i}{\alpha}$ with
\[
\sigma_{a}>\dotsm>\sigma_k<\dotsm<\sigma_{b}.
\]
Define $M\in[n]$ as
\[
M
\coloneqq
\max\big\{w(\poseta):
\poseta\in\compp{i}{\alpha}(\sigma)\text{ and }
\poseta\leq_Py\text{ for all } \posetb\in\compp{i}{\alpha}(\sigma)\big\}.
\]
That is, $M$ is the maximal label of a minimal element of $\compp{i}{\alpha}(\sigma)$.
Since $\sigma$ is $\alpha$-unimodal, there exist indices $j,m \in\compb{i}{\alpha}$ with $j<m$ and $j\leq k\leq m$ such that
\[
[j,m]=\{r\in\compb{i}{\alpha}:\sigma_r\leq M\}.
\]
In particular, $M=\sigma_j$ or $M=\sigma_m$.
We distinguish between these two cases.

\noindent
\textbf{Case 1.} If $M=\sigma_j$ then set
$\varphi(\sigma)\coloneqq \sigma\circ(j,\dotsc, m)$.

\noindent
\textbf{Case 2.} If $M=\sigma_m$ then set
$\varphi(\sigma)\coloneqq  \sigma\circ(j, \dotsc,m)^{-1}$.

It is straightforward to verify that $\varphi(\sigma)$ is $\alpha$-unimodal in both cases.
Moreover, $\compp{r}{\alpha}(\sigma)=\compp{r}{\alpha}(\varphi(\sigma))$ for all $r\in[\ell]$, so by the definition of $\ILinExt_{\alpha}^{\ast}(P,w)$ we have $\varphi(\sigma)\notin\ILinExt_{\alpha}^{\ast}(P,w)$.
Another consequence is that $\varphi(\sigma)^{-1}\circ w(\poseta)<\varphi(\sigma)^{-1}\circ w(\posetb)$ for all $\poseta\in\compp{r}{\alpha}(\sigma)$ and $\posetb\in\compp{s}{\alpha}(\sigma)$ with $r<s$.
To show that $\varphi$ is well-defined, it therefore suffices to verify that the restriction of $\varphi(\sigma)^{-1}\circ w$ to $\compp{i}{\alpha}(\sigma)$ is a linear extension.

Suppose we are in Case~1 and set $\poseta=w^{-1}(M)$.
In order to prove $\varphi(\sigma)\in\ILinExt(P,w)$ we need to verify
\[
\varphi(\sigma)^{-1}\circ w(\poseta)<\varphi(\sigma)^{-1}\circ w(\posetb)
\]
for all $\posetb\in\compp{i}{\alpha}(\sigma)$ with $\poseta<_Py$.
Thus assume that $\posetb\in\compp{i}{\alpha}(\sigma)$ satisfies $\poseta<_Py$.
Then $M=w(\poseta)<w(\posetb)$ implies $\sigma^{-1}\circ w(\posetb)>j$.
By the defining property of $j$ and $m$ we also have $\sigma^{-1}\circ w(\posetb)>m$, and therefore
\[
\varphi(\sigma)^{-1}\circ w(\poseta)
=(j,\dotsc,m)^{-1}\circ\sigma^{-1}(M)
=m<\sigma^{-1}\circ w(\posetb)
=\varphi(\sigma)^{-1} \circ w(\posetb),
\]
as claimed.

Next assume we are in Case~2 and set $\posetb=w^{-1}(M)$.
To show that $\varphi(\sigma)\in\ILinExt(P,w)$ we need to verify
\[
\varphi(\sigma)^{-1}\circ w(\poseta)<\varphi(\sigma)^{-1}\circ w(\posetb)
\]
for all $\poseta\in\compp{i}{\alpha}(\sigma)$ with $\poseta<_Py$.
But this is trivially true because $\posetb$ is a minimal element of $\compp{i}{\alpha}(\sigma)$.

To see that $\varphi$ is an involution note that $M>\sigma_k$ since $\compp{i}{\alpha}(\sigma)$ 
has at least two minimal elements by assumption.
Thus $\sigma$ belongs to Case~1 if and only if $\varphi(\sigma)$ belongs to Case~2.

It is also clear that $\varphi$ is sign-reversing.
Indeed
\[
|\DES(\varphi(\sigma))\setminus \compset{\alpha}|
=|\DES(\sigma)\setminus \compset{\alpha}|-1
\]
whenever $\sigma$ belongs to Case~1, and equivalently, 
\[
|\DES(\varphi(\sigma))\setminus \compset{\alpha}|
=|\DES(\sigma)\setminus \compset{\alpha}|+1
\]
if $\sigma$ belongs to Case~2.
Hence
\[
\sum_{\sigma\in\ILinExt_{\alpha}(P,w)}(-1)^{|\DES(\sigma)\setminus \compset{\alpha}|}
=\sum_{\sigma\in\ILinExt_{\alpha}^{\ast}(P,w)}(-1)^{|\DES(\sigma)\setminus \compset{\alpha}|}
\,.
\]
The claim in \eqref{eq:as_sum} now follows from \cref{lemma:no_descents}, which guarantees that
$\DES(\sigma)\subseteq \compset{\alpha}$
for all $\sigma\in\ILinExt_{\alpha}^{\ast}(P,w)$.
\end{proof}

\section{
Quasisymmetric functions}
\label[section]{sec:gessel}

The main result of this section, \cref{thm:gesselInPSum}, 
is the expansion of Gessel's fundamental basis into quasisymmetric power sums.
As an immediate consequence we obtain a new proof of a recent 
result of C.~Athanasiadis~\cite[Prop.~3.2]{Athanasiadis15} (\cref{prop:fundToPowerSumLift} below).
We start out with a brief introduction to quasisymmetric functions.
For more background the reader is referred to~\cite{StanleyEC2,LuotoEtAl2013IntroQSymSchur}.

A \emph{quasisymmetric function} $f$ is a formal power series $f \in \setQ[[x_1,x_2,\dotsc]]$
such that the degree of $f$ is finite, and for every composition $(\alpha_1,\dotsc,\alpha_\ell)$
the coefficient of $\xvar_{i_2}^{\alpha_2} \dotsm \xvar_{i_\ell}^{\alpha_\ell}$ in $f$
is the same for all integer sequences $1\leq i_1 < i_2 < \dotsb < i_\ell$.

Given a composition $\alpha$ with $\ell$ parts, the \emph{monomial quasisymmetric function} $\qmonom_\alpha$
is defined as
\[
\qmonom_\alpha(\xvec) \coloneqq \sum_{i_1 < i_2 < \dotsb < i_\ell} \xvar_{i_1}^{\alpha_1} \xvar_{i_2}^{\alpha_2} \dotsm \xvar_{i_\ell}^{\alpha_\ell}
\,.
\]
The functions $\qmonom_\alpha$, where $\alpha$ ranges over all compositions of $n$,
constitute a basis for the space of homogeneous quasisymmetric functions of degree $n$.

Another basis for the space of quasisymmetric functions are the fundamental quasisymmetric functions.
The \emph{fundamental quasisymmetric functions} of degree $n$ are indexed by subsets $S\subseteq [n-1]$ and defined as
\begin{align*}
\gessel_{n,S}(\xvec) \coloneqq  \sum_{\substack{j_1 \leq j_2 \leq \dotsc \leq j_n \\ i\in S \Rightarrow j_i<j_{i+1}  }} \xvar_{j_1}\dotsm \xvar_{j_n}
\,.
\end{align*}
Alternatively, given a composition $\alpha$ of $n$, we sometimes write $\gessel_\alpha\coloneqq \gessel_{n,\compset{\alpha}}$.
The expansion of fundamental quasisymmetric functions into monomial quasisymmetric functions is given by
\begin{equation}\label{eq:GesselintoMonomial}
\gessel_{\alpha}(\xvec)
=
\sum_{\beta \leq \alpha} \qmonom_\beta(\xvec)
\,.
\end{equation}
Given a composition $\alpha$ set
$z_\alpha
\coloneqq\prod_{i\geq1}
i^{m_i} m_i!$,
where $m_i$ denotes the number of parts of $\alpha$ that are equal to $i$.
The following definitions appear in \cite{BallantineDaughertyHicksMason2017}.
Given compositions $\alpha \leq \beta$, let
\begin{align}\label{eq:hookValues}
\pi(\alpha)
\coloneqq
\prod_{i=1}^{\ell(\alpha)}
(\alpha_1+\alpha_2+\dotsb + \alpha_i)
&&
\text{and}
&&
\pi(\alpha,\beta)
\coloneqq
\prod_{i=1}^{\ell(\beta)} \pi\big( \alpha^{(i)} \big)
\,,
\end{align}
where $\alpha^{(i)}$ is the composition of $\beta_i$ that consists of the parts $\alpha_j$ with $\compb{j}{\alpha}\subseteq\compb{i}{\beta}$.
The \emph{quasisymmetric power sum} $\Psi_\alpha$ is defined as
\begin{equation}\label{eq:PsiintoMonomial}
\Psi_\alpha(\xvec) \coloneqq z_\alpha \sum_{\beta \geq \alpha } \frac{1}{\pi(\alpha,\beta)} \qmonom_\beta(\xvec).
\end{equation}
For example,
\[
\Psi_{231} = 
\frac{1}{10}\qmonom_6 + 
\frac{1}{4}\qmonom_{24}+
\frac{3}{5}\qmonom_{51}+\qmonom_{231}.
\]
The quasisymmetric power sums refine the power sum symmetric functions as 
\begin{equation}\label{eq:powersum_Psi_expansion}
\psumP_\lambda(\xvec) = \sum_{\alpha \sim \lambda} \Psi_\alpha(\xvec)
\,,
\end{equation}
where the sum ranges over all compositions $\alpha$ whose parts rearrange to $\lambda$.
This is shown in \cite[Thm.~3.11]{BallantineDaughertyHicksMason2017} and we also give an alternative proof in \cref{sec:p}.

Let $\omega$ be the automorphism on quasisymmetric functions defined by $\omega(\gessel_S) = \gessel_{[n-1]\setminus (n-S)}$.
In particular, on the classical symmetric functions $\omega$ acts as
$\omega (\elementaryE_\lambda) = \completeH_\lambda$, $\quad \omega (\schurS_\lambda) = \schurS_{\lambda'}$, and
$\omega (\psumP_\lambda) = (-1)^{|\lambda|-\length(\lambda)}\psumP_\lambda$.
%
We remark that
\begin{equation}
\omega\left( \Psi_\alpha \right) = (-1)^{|\alpha|-\length(\alpha)}\Psi_{\alpha^r}
\,,
\end{equation}
where $\alpha^r$ denotes the reverse of $\alpha$, see \cite[Sec.~4]{BallantineDaughertyHicksMason2017}.

The following expansion of the fundamental basis into quasisymmetric power sums is the main result of this section.

\begin{theorem}\label{thm:gesselInPSum}
Let $n\in\setN$ and $S \subseteq [n-1]$.
Then
\begin{equation}\label{eq:gesselInPSum}
\gessel_{n,S}(\xvec)
= \sum_{\alpha} \frac{\Psi_{\alpha}(\xvec)}{z_{\alpha}} (-1)^{|S\setminus \compset{\alpha}|}
\,,
\end{equation}
where the sum ranges over all compositions $\alpha$ of $n$ such that $S$ is $\alpha$-unimodal.
\end{theorem}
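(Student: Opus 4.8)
The strategy is to invert the relation \eqref{eq:PsiintoMonomial} so that we may substitute known expansions into each other and apply \cref{thm:CONS}. First I would record the expansion of $\Psi_\alpha$ into the fundamental basis rather than the monomial basis. Combining \eqref{eq:PsiintoMonomial} with the (M\"obius-type) inversion of \eqref{eq:GesselintoMonomial}, which reads $\qmonom_\beta = \sum_{\gamma\leq\beta}(-1)^{|\compset{\beta}\setminus\compset{\gamma}|}\gessel_\gamma$, one obtains
\[
\Psi_\alpha(\xvec)
=
z_\alpha \sum_{\gamma}
\Big(
\sum_{\beta:\, \alpha\leq\beta,\ \gamma\leq\beta}
\frac{(-1)^{|\compset{\beta}\setminus\compset{\gamma}|}}{\pi(\alpha,\beta)}
\Big)
\gessel_\gamma(\xvec)
\,.
\]
The key combinatorial identity I expect to need is that the inner coefficient equals $\tfrac{1}{n!}|\CONS{\alpha}{\gamma}|$ when $\alpha\leq\gamma$ and vanishes otherwise; this is essentially the content of \cref{thm:CONS} read in the other direction, together with the hook-length interpretation $n!/\pi(\alpha,\beta)$ of a certain count of consistent permutations. (If \cref{thm:CONS} is not literally in this form, then the honest route is the one the introduction advertises: prove directly, via the hook-length formula for forests, that $|\CONS{\alpha}{\beta}| = n!\big/\prod_i \pi(\alpha^{(i)})$-type products, so that $\Psi_\alpha = \sum_{\gamma\geq\alpha} \tfrac{z_\alpha}{n!}|\CONS{\alpha}{\gamma}|\,\gessel_\gamma$ after reindexing.)

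Granting such a fundamental-basis expansion of $\Psi_\alpha$, the theorem becomes a finite linear-algebra verification. I would plug the right-hand side of \eqref{eq:gesselInPSum} in, i.e.\ compute
\[
\sum_{\alpha:\, S\in U_\alpha} \frac{(-1)^{|S\setminus\compset{\alpha}|}}{z_\alpha}\,\Psi_\alpha(\xvec)
=
\sum_{\gamma\vDash n}
\gessel_\gamma(\xvec)
\cdot
\frac{1}{n!}
\sum_{\alpha:\, \alpha\leq\gamma,\ \compset{\gamma}\in U_\alpha}
|\CONS{\alpha}{\gamma}|\,(-1)^{|S\setminus\compset{\alpha}|}
\,,
\]
after interchanging the two sums and using $S\in U_\alpha \Leftrightarrow \compset{\gamma}\in U_\alpha$ only in the relevant range; here I would need to match $S$ with $\compset{\gamma}$ so that the sign $(-1)^{|S\setminus\compset{\alpha}|}$ becomes $(-1)^{|\compset{\gamma}\setminus\compset{\alpha}|}$, which forces attention to those $\gamma$ with $\compset{\gamma}=S$ contributing the ``diagonal'' and all other $\gamma$ contributing zero. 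By \cref{thm:CONS}, $\sum_{\alpha}|\CONS{\alpha}{\gamma}|(-1)^{|\compset{\gamma}\setminus\compset{\alpha}|}$ equals $n!$ if $\gamma\leq\gamma$ (always true, giving the diagonal term $\gessel_{n,S}$) and $0$ otherwise; the summation condition ``$\alpha\leq\beta$'' in \cref{thm:CONS} with $\beta=\gamma$ is exactly what appears, so the off-diagonal terms vanish and only $\gessel_{n,\compset{\gamma}}=\gessel_{n,S}$ survives.

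The step I expect to be the genuine obstacle is reconciling the two descriptions of the transition coefficient: showing that the rational number $\sum_{\beta\geq\alpha,\beta\geq\gamma}(-1)^{|\compset{\beta}\setminus\compset{\gamma}|}/\pi(\alpha,\beta)$ is (up to the factor $n!/z_\alpha$) the integer $|\CONS{\alpha}{\gamma}|$, and in particular that it vanishes unless $\alpha\leq\gamma$. The cleanest way I see is not to sum the signs at all, but to interpret $1/\pi(\alpha,\beta)$ via the hook-length formula for forests: $\pi(\alpha^{(i)})$ is the product of hook lengths of a suitable linear (or ``broom'') forest, so $n!/\pi(\alpha,\beta)$ counts linear extensions, and $\CONS{\alpha}{\beta}$ is precisely a disjoint-union model for those linear extensions refined over $\beta$. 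Once that bijective identification is in hand, the sign cancellation in the theorem is nothing more than \cref{thm:CONS}, and the rest is bookkeeping about the refinement poset and the identity $S\in U_\alpha \Leftrightarrow \alpha\leq$ (the coarsening of $\compset{\gamma}$) that makes the two index sets agree.
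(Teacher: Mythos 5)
Your plan hinges on an intermediate claim that is false: that the fundamental-basis expansion of a quasisymmetric power sum is $\Psi_\alpha = \sum_{\gamma\geq\alpha}\tfrac{z_\alpha}{n!}\,\abs{\CONS{\alpha}{\gamma}}\,\gessel_\gamma$, i.e.\ that the transition coefficient is supported on $\alpha\leq\gamma$ and is nonnegative. Already for $n=2$ this fails: a direct computation from \eqref{eq:PsiintoMonomial} gives $\Psi_{11}=\gessel_{11}+\gessel_{2}$, whereas your formula predicts $2\gessel_{11}+\gessel_{2}$ (since $z_{11}=2$, $\abs{\CONS{(1,1)}{(1,1)}}=2$); moreover $\Psi_{2}=\qmonom_{2}=\gessel_{2}-\gessel_{11}$ has a negative coefficient and support outside $\{\gamma\geq\alpha\}$, so no formula of the asserted shape ($F$-positive and upper-triangular in refinement) can hold. (Your M\"obius inversion also has the sign backwards: for $\gamma\leq\beta$ one has $\compset{\beta}\subseteq\compset{\gamma}$, so the exponent must be $\abs{\compset{\gamma}\setminus\compset{\beta}}$; but the claim fails even with the correct sign.) Because of this, the subsequent substitution and the appeal to \cref{thm:CONS} do not go through. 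Even granting such an expansion, the last step misapplies \cref{thm:CONS}: in your inner sum the consistency data carry the index $\gamma$ while the sign and unimodality carry the fixed set $S$, so \cref{thm:CONS} (with its ``$\beta$'' equal to your $\gamma$ and its ``$\gamma$'' equal to the composition with descent set $S$) would make \emph{every} $\gamma$ with $S\subseteq\compset{\gamma}$ survive with coefficient $1$, not only $\compset{\gamma}=S$; the asserted equivalence ``$S\in U_\alpha\Leftrightarrow\compset{\gamma}\in U_\alpha$ in the relevant range'' has no justification.

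The repair is to avoid inverting \eqref{eq:GesselintoMonomial} altogether and to work in the monomial basis, which is what the paper does: expand both sides of \eqref{eq:gesselInPSum} via \eqref{eq:GesselintoMonomial} and \eqref{eq:PsiintoMonomial} and compare coefficients of $\qmonom_\beta$. This reduces the theorem to the single identity
\begin{equation*}
\sum_{\alpha\in R(\beta,\gamma)}\frac{(-1)^{\abs{\compset{\gamma}\setminus\compset{\alpha}}}}{\pi(\alpha,\beta)}
=\begin{cases}1&\text{if }\beta\leq\gamma,\\0&\text{otherwise,}\end{cases}
\end{equation*}
where $R(\beta,\gamma)$ consists of the $\alpha\leq\beta$ with $\compset{\gamma}$ $\alpha$-unimodal; here the sign composition $\gamma$ is the index of the fundamental being expanded, exactly as in \cref{thm:CONS}. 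Multiplying by $n!$ and using $\abs{\CONS{\alpha}{\beta}}\cdot\pi(\alpha,\beta)=n!$ (\cref{prop:treeHook}, via the hook-length formula for forests — this part of your plan is sound) turns the identity verbatim into \cref{thm:CONS}, with no inversion, no fundamental-basis expansion of $\Psi_\alpha$, and no diagonal-extraction argument needed.
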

\begin{proof}
Expanding both sides of \eqref{eq:gesselInPSum} in the monomial basis according to~\eqref{eq:GesselintoMonomial} and~\eqref{eq:PsiintoMonomial} we obtain
\[
\sum_{\beta \leq \gamma}
\qmonom{_\beta}(\xvec)
= 
\sum_{\substack{ \alpha \vDash n \\ 
\compset{\gamma}\in U_{\alpha}}}
\sum_{\beta \geq \alpha }
\frac{1}{\pi(\alpha,\beta)} \qmonom{_\beta}(\xvec)\,  (-1)^{|\compset{\gamma}\setminus \compset{\alpha}|}
\,.
\]
Comparing coefficients of $\qmonom_{\beta}$, it suffices to prove that for all compositions $\beta$ and $\gamma$ of $n$, we have
\begin{equation}\label{eq:Mbeta_coeff}
\sum_{\alpha\in R(\beta,\gamma)}
\frac{1}{\pi(\alpha,\beta)}(-1)^{|\compset{\gamma}\setminus \compset{\alpha}|} =
\begin{cases}
1 &\quad\text{if }\beta \leq \gamma,\\
0 &\quad\text{otherwise,}
\end{cases}
\end{equation}
where $R(\beta,\gamma)$ denotes the set of all compositions $\alpha\leq\beta$ such that $\compset{\gamma}$ is $\alpha$-unimodal.

It is shown in~\cite[Lemma 3.7]{BallantineDaughertyHicksMason2017} that $n! = |\CONS{\alpha}{\beta}|\cdot \pi(\alpha,\beta)$. 
We give a short alternative proof of this identity in \cref{prop:treeHook} below using the hook-length formula for forests.
After multiplying both sides of \refq{Mbeta_coeff} by $n!$ the claim follows from \cref{thm:CONS}.
\end{proof}

\begin{proposition}\label{prop:treeHook}
Let $n\in\setN$ and $\alpha \leq \beta$ be compositions of $n$.
Then 
\[
|\CONS{\alpha}{\beta}|\cdot \pi(\alpha,\beta) = n!
\,.
\]
\end{proposition}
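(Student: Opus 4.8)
The plan is to prove the identity $|\CONS{\alpha}{\beta}|\cdot\pi(\alpha,\beta)=n!$ by exhibiting a forest $F$ on $n$ vertices whose hook-length product equals $\pi(\alpha,\beta)$, and whose number of linear extensions (equivalently, the number of standard labelings of $F$) equals $|\CONS{\alpha}{\beta}|$; the claim then follows immediately from the hook-length formula for forests, which states that the number of linear extensions of a forest $F$ with $n$ nodes is $n!/\prod_{v\in F}h(v)$, where $h(v)$ is the size of the subtree rooted at $v$.

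First I would describe the forest. Recall that $\pi(\alpha,\beta)=\prod_{i=1}^{\ell(\beta)}\pi(\alpha^{(i)})$ and $\pi(\alpha^{(i)})=\prod_{j}(\alpha^{(i)}_1+\dots+\alpha^{(i)}_j)$, so $\pi(\alpha,\beta)$ is a product of partial sums. For a single block $\compb{i}{\beta}$ refined by the parts $\alpha^{(i)}_1,\dots,\alpha^{(i)}_{r}$, take a path-like tree built by stacking, in order, a chain of $\alpha^{(i)}_1$ nodes, then a chain of $\alpha^{(i)}_2$ nodes on top, and so on, with each new chain attached by its bottom node to the top node of the previous chain; the subtree sizes read off from the top down are exactly the partial sums $\alpha^{(i)}_1+\dots+\alpha^{(i)}_j$ together with the sizes internal to each chain — but one must be slightly careful, so the actual construction is: within the $j$-th chain the hook lengths are $\alpha^{(i)}_j,\alpha^{(i)}_j-1,\dots$ down to the branch point, which do not obviously give partial sums. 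The cleaner choice is a forest where block $\compb{i}{\beta}$ contributes one rooted tree whose root has hook length $\beta_i$, and which decomposes so that the $\pi$-factors appear as hook lengths at distinguished ``record'' nodes while all remaining hook lengths cancel against a factorial; this is precisely the standard combinatorial model behind the cycle-to-one-line bijection in Case~1 of the proof of \cref{thm:CONS}. Indeed, $\CONS{\alpha}{\beta}$ was shown there to be in sign-free bijection, after reversing subwords, with a set whose cardinality involves choosing, for each $\alpha$-block of size $\alpha_j$, a position for its minimum (this is where a factor not equal to $\alpha_j$ but to the running partial sum enters, because the blocks within a $\beta$-block must be merge-sorted by their maxima).

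The key steps, in order, would be: (1) make precise the forest $F=F(\alpha,\beta)$, namely a disjoint union over $i\in[\ell(\beta)]$ of trees $T_i$, where $T_i$ encodes the refinement $\alpha^{(i)}$ of $\beta_i$ by a ``broom'' whose record-node hook lengths are the partial sums $\alpha^{(i)}_1,\ \alpha^{(i)}_1+\alpha^{(i)}_2,\ \dots,\ \beta_i$; (2) verify $\prod_{v\in F}h(v)=\pi(\alpha,\beta)$ by direct inspection of the construction; (3) set up a bijection between $\CONS{\alpha}{\beta}$ and the linear extensions of $F$ (equivalently the Jordan--Hölder set $\ILinExt(F,w)$ for a natural labeling $w$), using that a permutation consistent with $\alpha\le\beta$ is determined by, and determines, the relative order in which the ``non-record'' letters of each block are inserted and the order in which blocks inside a common $\beta$-block are interleaved by increasing maxima; (4) apply the hook-length formula for forests to conclude $|\CONS{\alpha}{\beta}|=n!/\pi(\alpha,\beta)$.

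The main obstacle I expect is step~(3): getting the bijection between consistent permutations and linear extensions of $F$ exactly right, in particular matching the two constraints in the definition of $\CONS{\alpha}{\beta}$ (maximum of each $\sigma^{(i)}$ in last position, and $\beta$-block subwords sorted by their maxima) against the tree order of $F$. The ``max in last position'' condition should correspond to the record node being the top of each broom-chain, and the ``sorted maxima'' condition to the way the chains for one $\beta$-block stack into a single tree; verifying that no other linear extensions arise, and that the subtree-size product genuinely equals $\pi(\alpha,\beta)$ on the nose rather than up to a stray factorial, is the delicate bookkeeping. Once the forest is correctly pinned down, steps (1), (2) and (4) are routine, and step (4) is an immediate invocation of the hook-length formula for forests (see, e.g., \cite[Sec.~3.8]{StanleyEC1} or Knuth).
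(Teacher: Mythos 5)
Your proposal follows essentially the same route as the paper: attach to $(\alpha,\beta)$ the forest in which the non-maximal positions of each $\alpha$-block hang as leaves below the block's last position and these ``record'' positions are chained within each $\beta$-block, so that the record hook lengths are exactly the partial sums giving $\pi(\alpha,\beta)$, and then invoke Knuth's hook-length formula for forests. The bijection you flag as the delicate step (3) is in fact immediate: condition (i) of consistency says each record position carries the maximum of its $\alpha$-block (order-preservation on the leaf-to-record relations), and condition (ii) says the records within a $\beta$-block carry increasing values (order-preservation on the record chain), which together are precisely the linear-extension condition for this forest.
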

\begin{proof}
To the pair $(\alpha,\beta)$ we associate a (labeled) rooted forest on the vertices $[n]$.
For $i\in[\ell(\alpha)]$ set $s_i\coloneqq\alpha_{1}+\dotsm+\alpha_{i}$.
For all $i\in[\ell(\alpha)]$ and all $j\in\compb{i}{\alpha}\setminus\{s_i\}$ add an edge from $s_i$ to $j$.
Moreover if $s_i,s_{i+1}\in\compb{k}{\beta}$ for some $k\in[\ell(\beta)]$ then add an edge from $s_{i+1}$ to $s_i$.
For example, if $\alpha=2312$ and $\beta=62$ then the forest is shown in \cref{fig:hookTree}.

\begin{figure}[b]
\begin{tikzpicture}[scale=.8,baseline=-10mm,
circ/.style={circle,draw,inner sep=0.8pt, minimum width=12pt,font=\footnotesize}]
\begin{scope}
\node[circ] (1)  at (0,0) {$2$};
\node[circ] (11) at (0,-1) {$1$};
\node[circ] (2)  at (1.5,1) {$5$};
\node[circ] (21) at (1,0) {$3$};
\node[circ] (22) at (2,0) {$4$};
\node[circ] (3)  at (3,2) {$6$};
\node[circ] (4)  at (4.5,1) {$8$};
\node[circ] (41)  at (4.5,0) {$7$};
\draw[black,thick,->] (1)--(11);
\draw[black,thick,->] (2)--(21);
\draw[black,thick,->] (2)--(22);
\draw[black,thick,->] (4)--(41);
\draw[black,thick,->] (3)--(2);
\draw[black,thick,->] (2)--(1);
\end{scope}
\begin{scope}[xshift=8cm]
\node[circ] (1)  at (0,0) {$2$};
\node[circ] (11) at (0,-1) {$1$};
\node[circ] (2)  at (1.5,1) {$5$};
\node[circ] (21) at (1,0) {$1$};
\node[circ] (22) at (2,0) {$1$};
\node[circ] (3)  at (3,2) {$6$};
\node[circ] (4)  at (4.5,1) {$2$};
\node[circ] (41)  at (4.5,0) {$1$};
\draw[black,thick,->] (1)--(11);
\draw[black,thick,->] (2)--(21);
\draw[black,thick,->] (2)--(22);
\draw[black,thick,->] (4)--(41);
\draw[black,thick,->] (3)--(2);
\draw[black,thick,->] (2)--(1);
\end{scope}
\end{tikzpicture}
\caption{Left: The forest associated with the compositions $\alpha=2312$ and $\beta=62$.
Right: The hook lengths of the vertices.
}\label{fig:hookTree}
\end{figure}
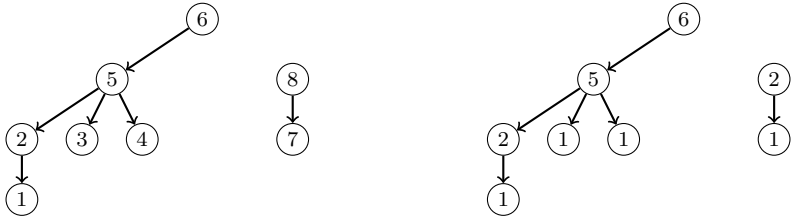

The \emph{hook length} of a vertex in the forest is defined as the size of the subtree rooted at that vertex, see \cref{fig:hookTree}.
From the definitions in \eqref{eq:hookValues} it is straightforward to see that the product of all hook lengths in the forest is $\pi(\alpha,\beta)$.
Furthermore it is immediate from the definition that the linear extensions of the forest can be identified with $\CONS{\alpha}{\beta}$.
The hook-length formula for counting linear extensions of forests by D.~Knuth \cite[Chap.~5.1.4, Ex.~20]{Knuth1998ArtOfProgramming} then implies that $|\CONS{\alpha}{\beta}|\cdot \pi(\alpha,\beta) = n!$ as claimed.
\end{proof}

\cref{thm:gesselInPSum} immediately implies the following result.

\begin{corollary}\label[corollary]{cor:qsym_Athanasiadis}
Let $X$ be a quasisymmetric function with
\begin{equation*}
X(\xvec)
=\sum_{S \subseteq [n-1]} c_S \gessel_{n,S}(\xvec).
\end{equation*}
Then
\begin{equation*}
X(\xvec) = \sum_{\alpha \vDash n} \frac{\Psi_{\alpha}(\xvec)}{z_{\alpha}} 
\sum_{S\in U_\alpha} (-1)^{|S\setminus \compset{\alpha}|} c_S
\,.
\end{equation*}
\end{corollary}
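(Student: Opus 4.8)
The plan is to expand the given quasisymmetric function $X$ via \cref{thm:gesselInPSum} and then interchange the order of summation. First I would substitute the expansion of each $\gessel_{n,S}$ into quasisymmetric power sums supplied by \cref{thm:gesselInPSum}:
\[
X(\xvec)
=\sum_{S\subseteq[n-1]} c_S\,\gessel_{n,S}(\xvec)
=\sum_{S\subseteq[n-1]} c_S
\sum_{\substack{\alpha\vDash n\\ S\in U_\alpha}}
\frac{\Psi_\alpha(\xvec)}{z_\alpha}(-1)^{|S\setminus\compset{\alpha}|}\,.
\]
Here I have rewritten the condition ``$S$ is $\alpha$-unimodal'' from the statement of \cref{thm:gesselInPSum} as $S\in U_\alpha$, using the notation $U_\alpha$ for the set of $\alpha$-unimodal subsets of $[n-1]$ introduced just before \cref{lem:unimodal}.

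Next I would swap the two (finite) sums, collecting the terms according to $\alpha$ rather than $S$. Since the index set of pairs $(S,\alpha)$ with $S\subseteq[n-1]$ and $S\in U_\alpha$ is the same whether we sum over $S$ first or over $\alpha$ first, this is a purely formal rearrangement of a finite double sum:
\[
X(\xvec)
=\sum_{\alpha\vDash n}\frac{\Psi_\alpha(\xvec)}{z_\alpha}
\sum_{\substack{S\subseteq[n-1]\\ S\in U_\alpha}}(-1)^{|S\setminus\compset{\alpha}|}c_S
=\sum_{\alpha\vDash n}\frac{\Psi_\alpha(\xvec)}{z_\alpha}
\sum_{S\in U_\alpha}(-1)^{|S\setminus\compset{\alpha}|}c_S\,,
\]
which is exactly the claimed identity.

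There is essentially no obstacle here: the corollary is a direct consequence of \cref{thm:gesselInPSum} by linearity, and the only thing to be careful about is that all sums involved are finite (there are finitely many subsets $S\subseteq[n-1]$ and finitely many compositions $\alpha\vDash n$), so the interchange of summation order is unconditionally valid. If anything, the one point worth a sentence of justification is the translation between the phrasing ``$S$ is $\alpha$-unimodal'' used in \cref{thm:gesselInPSum} and the notation $S\in U_\alpha$ used in the corollary; both refer to the same condition by the definition of $U_\alpha$.
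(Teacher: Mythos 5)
Your proposal is correct and is exactly the argument the paper intends: the paper states that \cref{cor:qsym_Athanasiadis} is an immediate consequence of \cref{thm:gesselInPSum}, i.e.\ one expands each $\gessel_{n,S}$ via that theorem and interchanges the two finite sums by linearity. Nothing further is needed.
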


From \cref{cor:qsym_Athanasiadis} and \eqref{eq:powersum_Psi_expansion} we obtain a new proof of the following result due to C.~Athanasiadis for symmetric functions, which has served as inspiration for \cref{thm:gesselInPSum}.

\begin{corollary}
\label{prop:fundToPowerSumLift}
\textnormal{(\cite[Prop.~3.2]{Athanasiadis15}, see also \cite[Prop.~9.3]{AdinRoichman2015})}
Let $X$ be a symmetric function with
\begin{equation*}
X(\xvec) = \sum_{S \subseteq [n-1]} c_S \gessel_{n,S}(\xvec).
\end{equation*}
Then
\begin{equation*}
X(\xvec) = \sum_{\lambda \vdash n} \frac{\psumP_{\lambda}(\xvec)}{z_\lambda}
\sum_{S\in U_\lambda} (-1)^{|S\setminus \compset{\lambda}|} c_S.
\end{equation*}
\end{corollary}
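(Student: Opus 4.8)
The plan is to derive \cref{prop:fundToPowerSumLift} as a direct specialization of \cref{cor:qsym_Athanasiadis}. First I would observe that \cref{cor:qsym_Athanasiadis} already furnishes the $\Psi$-expansion
\[
X(\xvec) = \sum_{\alpha \vDash n} \frac{\Psi_{\alpha}(\xvec)}{z_{\alpha}}
\sum_{S\in U_\alpha} (-1)^{|S\setminus \compset{\alpha}|} c_S
\]
for \emph{any} quasisymmetric $X = \sum_S c_S \gessel_{n,S}$. Now I add the hypothesis that $X$ is in fact symmetric. Grouping the compositions $\alpha\vDash n$ by the partition $\lambda$ they rearrange to, and invoking \eqref{eq:powersum_Psi_expansion}, which says $\psumP_\lambda = \sum_{\alpha\sim\lambda}\Psi_\alpha$, it suffices to show that the inner coefficient $c_\alpha \coloneqq \sum_{S\in U_\alpha}(-1)^{|S\setminus\compset\alpha|}c_S$ depends only on the underlying partition $\lambda$ of $\alpha$ (and then also that it may be written with the partition-indexed data $z_\lambda$, $U_\lambda$, $\compset\lambda$); once both hold, the symmetric-function expansion collapses to $X = \sum_{\lambda\vdash n}(\psumP_\lambda/z_\lambda)\,c_\lambda$, which is the claimed identity since $z_\alpha = z_\lambda$ whenever $\alpha\sim\lambda$.

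The substance of the argument is therefore the claim that $\alpha\sim\beta$ implies $c_\alpha = c_\beta$. I would prove this by expanding $X$ in the power sum \emph{symmetric} functions directly. Since $X$ is symmetric, it has a unique expansion $X = \sum_{\lambda\vdash n}d_\lambda\,\psumP_\lambda$; substituting $\psumP_\lambda = \sum_{\alpha\sim\lambda}\Psi_\alpha$ gives $X = \sum_{\alpha\vDash n}d_{\lambda(\alpha)}\Psi_\alpha$, where $\lambda(\alpha)$ is the partition sorting of $\alpha$. On the other hand \cref{cor:qsym_Athanasiadis} gives $X = \sum_{\alpha\vDash n}(c_\alpha/z_\alpha)\Psi_\alpha$. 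Because the $\Psi_\alpha$ form a basis of the space of homogeneous quasisymmetric functions of degree $n$, we may compare coefficients term by term: $c_\alpha/z_\alpha = d_{\lambda(\alpha)}$ for every composition $\alpha$. Hence $c_\alpha/z_\alpha$ depends only on $\lambda(\alpha)$, and since $z_\alpha = z_{\lambda(\alpha)}$ this shows $c_\alpha = z_{\lambda(\alpha)}\,d_{\lambda(\alpha)}$ depends only on $\lambda(\alpha)$, as needed. Finally, taking $\alpha$ to be the (weakly decreasing) composition equal to $\lambda$ itself, one has $\compset\alpha = \compset\lambda$ and $U_\alpha = U_\lambda$, so $c_\lambda = \sum_{S\in U_\lambda}(-1)^{|S\setminus\compset\lambda|}c_S$, matching the statement, and $d_\lambda = c_\lambda/z_\lambda$ is precisely the asserted coefficient.

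I do not expect a genuine obstacle here: the only point that needs a word of care is the legitimacy of comparing $\Psi_\alpha$-coefficients, which rests on the fact — recorded in the introduction and in \cite{BallantineDaughertyHicksMason2017} — that $\{\Psi_\alpha : \alpha\vDash n\}$ is a basis of the degree-$n$ homogeneous quasisymmetric functions. Everything else is bookkeeping: reindexing a sum over compositions as a double sum over partitions and their rearrangements, and using $z_\alpha = z_{\lambda(\alpha)}$. I would write the proof in three short steps — (1) apply \cref{cor:qsym_Athanasiadis}; (2) use symmetry of $X$ together with \eqref{eq:powersum_Psi_expansion} and linear independence of the $\Psi_\alpha$ to see the coefficient is partition-constant; (3) collect terms and read off the formula — and I expect it to take only a few lines.
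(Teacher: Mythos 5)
Your proposal is correct and follows essentially the same route as the paper, which obtains the corollary directly from \cref{cor:qsym_Athanasiadis} together with $\psumP_\lambda=\sum_{\alpha\sim\lambda}\Psi_\alpha$ from \eqref{eq:powersum_Psi_expansion}; your use of linear independence of the $\Psi_\alpha$ to see that $c_\alpha/z_\alpha$ is constant on rearrangement classes just makes explicit the bookkeeping the paper leaves to the reader.
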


\section{Reverse \texorpdfstring{$P$}{P}-partitions}\label[section]{sec:revPP}

In this section it is shown that the generating function of reverse $P$-partitions expands positively into quasisymmetric power sums for all posets $P$.

The theory of $P$-partitions was developed by R.~Stanley~\cite{Stanley1972}, and has numerous applications in the world of quasisymmetric functions.
Let $(P,w)$ be a labeled poset.
A function $f:P\to\setP$ is called \emph{reverse $(P,w)$-partition} 
if it satisfies the following two properties for all $\poseta,\posetb\in P$:
\begin{enumerate}[(i)]
\item If $\poseta\leq_Py$ then $f(\poseta)\leq f(\posetb)$.
\item If $\poseta<_Py$ and $w(\poseta)>w(\posetb)$ then $f(\poseta)<f(\posetb)$.
\end{enumerate}
Let $\rpp(P,w)$ be the set of reverse $(P,w)$-partitions, 
and denote the generating function of reverse $(P,w)$-partitions by
\begin{align}
K_{P,w}(\xvec)
\coloneqq
\sum_{f\in\rpp(P,w)} 
\prod_{\poseta\in P}\xvar_{f(\poseta)}
.
\end{align}
If $(P,w)$ is naturally labeled, then reverse $(P,w)$-partitions are just order-preserving maps $f:P\to\setP$, which are also called \emph{reverse $P$-partitions}.
Denote the set of reverse $P$-partitions by $\rpp(P)$ and the corresponding generating function by
\[
K_{P}(\xvec)
=
\sum_{\substack{f:P \to \setP \\ 
\poseta <_P \posetb \Rightarrow f(\poseta) \leq f(\posetb)}}
\prod_{\poseta\in P}\xvar_{f(\poseta)}
.
\]
If instead $w$ is order-reversing then reverse $(P,w)$-partitions are \emph{strict reverse $P$-partitions}.

The expansion of $K_{P,w}$ into fundamental quasisymmetric functions is a well-known result.

\begin{lemma}[{\cite[Cor.~7.19.5]{StanleyEC2}}]
\label{lem:fExpOfKP}
Let $(P,w)$ be a labeled poset with $n$ elements.
Then the series $K_{P,w}(\xvec)$ is quasisymmetric and its expansion into fundamental quasisymmetric functions is given by
\[
K_{P,w}(\xvec) = \sum_{\sigma \in \ILinExt(P,w) }
\gessel_{n,\DES(\sigma)}(\xvec)
\,.
\]
\end{lemma}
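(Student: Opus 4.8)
The statement to prove is Lemma~\ref{lem:fExpOfKP}, the expansion of $K_{P,w}$ into the fundamental basis. This is a cornerstone of Stanley's theory of $(P,w)$-partitions, so the plan is to prove it via the \emph{fundamental lemma of $(P,w)$-partitions}: the set $\rpp(P,w)$ of reverse $(P,w)$-partitions decomposes as a disjoint union
\[
\rpp(P,w)=\bigsqcup_{\sigma\in\ILinExt(P,w)}\rpp\big(\sigma,\DES(\sigma)\big),
\]
where $\rpp(\sigma,\DES(\sigma))$ denotes the functions $f:P\to\setP$ that, when read along the linear order imposed by $\sigma$, are weakly increasing with strict increases exactly forced at the descent positions of $\sigma$. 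Once this decomposition is established, summing the weight-generating function over each piece gives exactly $\gessel_{n,\DES(\sigma)}(\xvec)$ by definition of the fundamental quasisymmetric function, and the lemma follows immediately.

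\textbf{Key steps.} First I would fix the bijection $w:P\to[n]$ and, for each $\sigma\in\symS_n$, think of $\sigma$ as listing the elements of $P$ in the order $w^{-1}(\sigma_1),w^{-1}(\sigma_2),\dotsc,w^{-1}(\sigma_n)$; the condition $\sigma\in\ILinExt(P,w)$ says precisely that this listing is a linear extension of $P$. Second, given any $f\in\rpp(P,w)$, I would produce the unique $\sigma\in\symS_n$ such that $f(w^{-1}(\sigma_1))\leq f(w^{-1}(\sigma_2))\leq\dotsm\leq f(w^{-1}(\sigma_n))$, breaking ties among equal $f$-values by increasing label $w$. The heart of the argument is to check two things about this $\sigma$: (a) it is a linear extension of $P$, i.e.\ $\sigma\in\ILinExt(P,w)$ --- this uses condition~(i) (order-preservation) together with condition~(ii) (the strictness forced when $w$ inverts the order) to rule out the bad case where $\poseta<_P\posetb$ but $\posetb$ is listed before $\poseta$; and (b) $f\in\rpp(\sigma,\DES(\sigma))$, i.e.\ $f(w^{-1}(\sigma_i))<f(w^{-1}(\sigma_{i+1}))$ exactly when $i\in\DES(\sigma)$. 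For (b), the tie-breaking rule forces a strict increase whenever $\sigma_i>\sigma_{i+1}$ (equal values would have been ordered by increasing $w$), and conversely at a non-descent equal values are permitted; one must also verify there is no hidden obstruction, namely that $w^{-1}(\sigma_i)$ and $w^{-1}(\sigma_{i+1})$ need not be comparable in $P$ for this to work. Conversely, any $f$ lying in one of the pieces $\rpp(\sigma,\DES(\sigma))$ for $\sigma\in\ILinExt(P,w)$ satisfies (i) and (ii): order-preservation along a linear extension gives (i), and when $w(\poseta)>w(\posetb)$ with $\poseta<_P\posetb$, the pair appears as a descent (or is separated by intermediate elements each of which forces $\leq$, with at least one strict step) so $f(\poseta)<f(\posetb)$, giving (ii). Third, the disjointness of the union is automatic because distinct $\sigma$'s impose incompatible weak-order-plus-strictness patterns. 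Finally I would sum: $\sum_{f\in\rpp(\sigma,\DES(\sigma))}\prod_{\poseta\in P}\xvar_{f(\poseta)}=\sum_{j_1\leq\dotsm\leq j_n,\ i\in\DES(\sigma)\Rightarrow j_i<j_{i+1}}\xvar_{j_1}\dotsm\xvar_{j_n}=\gessel_{n,\DES(\sigma)}(\xvec)$, and adding over $\sigma\in\ILinExt(P,w)$ completes the proof; quasisymmetry of $K_{P,w}$ is then inherited from that of the $\gessel_{n,S}$.

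\textbf{Main obstacle.} The only genuinely delicate point is step (a)/(b): verifying that the tie-breaking rule (sort equal $f$-values by increasing label) produces a permutation that is simultaneously a linear extension \emph{and} has its descent set exactly matching the strict inequalities of $f$. The subtlety is that conditions (i) and (ii) only constrain \emph{comparable} pairs in $P$, whereas $\sigma$ totally orders all $n$ elements; one has to argue that the incomparable pairs can always be interleaved consistently and that no incomparable pair is ever \emph{forced} into a strict inequality by $f$ in a way that conflicts with a non-descent of $\sigma$. This is exactly where Stanley's insight lies, and it is why condition~(ii) is phrased with the label comparison $w(\poseta)>w(\posetb)$ rather than something symmetric. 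Everything else --- the disjointness and the final generating-function computation --- is routine. I would present step two as the core lemma and relegate the summation to a single display.
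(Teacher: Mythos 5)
Your argument is correct, and it is exactly the classical proof of this result: the paper itself offers no proof but simply cites \cite[Cor.~7.19.5]{StanleyEC2}, whose proof is the fundamental lemma of $(P,w)$-partitions that you reconstruct (decompose $\rpp(P,w)$ over $\sigma\in\ILinExt(P,w)$ via sorting by $f$-values with ties broken by increasing label, check compatibility of the descent pattern with conditions (i)--(ii), and sum each piece to $\gessel_{n,\DES(\sigma)}$). The only point worth tightening is the verification of condition (ii) in the converse direction: the strict step comes from the fact that $\sigma_p>\sigma_q$ forces a descent of $\sigma$ at some position between $p$ and $q$, which your parenthetical gestures at but does not state explicitly.
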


The following theorem is the main result of this section.
\begin{theorem}\label{thm:KP_positive}
Let $(P,w)$ be a naturally labeled poset with $n$ elements.
Then
\begin{equation}\label{eq:as_sum}
K_{P}(\xvec)
=\sum_{\alpha\vDash n} \frac{\Psi_{\alpha}(\xvec)}{z_{\alpha}} |\ILinExt^\ast_{\alpha}(P,w)|
\,.
\end{equation}
In particular, the quasisymmetric function $K_P$ is $\Psi$-positive.
\end{theorem}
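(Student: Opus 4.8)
The plan is to combine Lemma~\ref{lem:fExpOfKP}, which expands $K_P$ in the fundamental basis, with Theorem~\ref{thm:gesselInPSum}, which expands each fundamental function in quasisymmetric power sums, and then to recognize the resulting inner sign-sum as exactly the quantity computed by Theorem~\ref{thm:ILinExt}. This is precisely the ``expand in fundamentals, then apply the $\alpha$-unimodal machinery'' strategy advertised in the introduction, so the work is essentially bookkeeping once the three ingredients are lined up.

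\textbf{Step 1: substitute the fundamental expansion.} By Lemma~\ref{lem:fExpOfKP} we have $K_P(\xvec)=\sum_{\sigma\in\ILinExt(P,w)}\gessel_{n,\DES(\sigma)}(\xvec)$ (writing $K_P$ for $K_{P,w}$ since $(P,w)$ is naturally labeled). Now apply Theorem~\ref{thm:gesselInPSum} to each term: for a fixed $\sigma$, with $S=\DES(\sigma)$,
\[
\gessel_{n,\DES(\sigma)}(\xvec)=\sum_{\substack{\alpha\vDash n\\ \DES(\sigma)\in U_\alpha}}\frac{\Psi_\alpha(\xvec)}{z_\alpha}(-1)^{|\DES(\sigma)\setminus\compset{\alpha}|}.
\]

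\textbf{Step 2: swap the order of summation.} Summing over $\sigma$ and interchanging the two sums, I group terms by $\alpha$. For a fixed composition $\alpha\vDash n$, the permutations $\sigma\in\ILinExt(P,w)$ that contribute are exactly those with $\DES(\sigma)\in U_\alpha$, i.e.\ those $\sigma$ that are $\alpha$-unimodal --- this is the set $\ILinExt_\alpha(P,w)$ by definition. Hence
\[
K_P(\xvec)=\sum_{\alpha\vDash n}\frac{\Psi_\alpha(\xvec)}{z_\alpha}\sum_{\sigma\in\ILinExt_\alpha(P,w)}(-1)^{|\DES(\sigma)\setminus\compset{\alpha}|}.
\]
(One should note the subtlety that $\DES(\sigma)\in U_\alpha$ is equivalent to $\sigma$ being $\alpha$-unimodal because $U_\alpha$ is by definition the set of descent sets of $\alpha$-unimodal permutations; one direction needs the observation that if $\DES(\sigma)$ is $\alpha$-unimodal then $\sigma$ itself can be taken $\alpha$-unimodal, but in fact the cleaner route is to use the reformulation: $\sigma$ is $\alpha$-unimodal iff $\DES(\sigma)\cap(\compb{i}{\alpha}\setminus\compset{\alpha})$ is a prefix of the latter for each $i$, which is intrinsic to $\DES(\sigma)$.)

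\textbf{Step 3: identify the inner sum.} Apply Theorem~\ref{thm:ILinExt}: the inner sum $\sum_{\sigma\in\ILinExt_\alpha(P,w)}(-1)^{|\DES(\sigma)\setminus\compset{\alpha}|}$ equals $|\ILinExt^\ast_\alpha(P,w)|$. Plugging this in gives exactly \eqref{eq:as_sum}. Since each $|\ILinExt^\ast_\alpha(P,w)|$ is a nonnegative integer and each coefficient $1/z_\alpha>0$, the function $K_P$ is $\Psi$-positive, proving the final assertion.

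\textbf{Main obstacle.} The only genuinely delicate point is Step~2: making sure the index set ``$\alpha$ such that $\DES(\sigma)\in U_\alpha$'' really coincides with the condition defining $\ILinExt_\alpha(P,w)$, and more importantly that no $\sigma$ is double-counted or dropped when the two summations are exchanged. Since the double sum is finite (only finitely many compositions of $n$, finitely many linear extensions), the interchange is formally unproblematic; the substance is the identification of the two descriptions of $\alpha$-unimodality, which is already recorded after the definition of $\alpha$-unimodal sets in Section~\ref{sec:unimodal}. All the heavy lifting --- the sign-reversing involution --- has been absorbed into Theorem~\ref{thm:ILinExt}, so beyond this reconciliation the proof is a two-line computation.
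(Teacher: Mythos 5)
Your proposal is correct and follows the paper's own proof essentially verbatim: expand $K_P$ via Lemma~\ref{lem:fExpOfKP}, apply Theorem~\ref{thm:gesselInPSum} termwise, interchange the finite sums, and identify the inner signed sum with $|\ILinExt^\ast_\alpha(P,w)|$ via Theorem~\ref{thm:ILinExt}. Your remark on reconciling ``$\DES(\sigma)\in U_\alpha$'' with ``$\sigma$ is $\alpha$-unimodal'' is handled correctly by the intrinsic descent-set characterization, exactly as the paper implicitly does.
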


\begin{proof}
By \cref{lem:fExpOfKP}
\[
K_{P}(\xvec)
=K_{P,w}(\xvec)
=\sum_{\sigma \in \ILinExt(P,w) }\gessel_{n,\DES(\sigma)}(\xvec)
\,.
\]
Expanding the right hand side into quasisymmetric power sums according to \cref{thm:gesselInPSum}, we obtain
\begin{align*}
K_P(\xvec)
&=\sum_{\sigma\in\ILinExt(P,w)}
\sum_{\substack{\alpha\vDash n \\ \DES(\sigma)\in U_\alpha}}
\frac{\Psi_{\alpha}(\xvec)}{z_{\alpha}} (-1)^{|\DES(\sigma)\setminus \compset{\alpha}|}
\\
&=
\sum_{\alpha\vDash n}
\frac{\Psi_{\alpha}(\xvec)}{z_{\alpha}}
\sum_{\sigma\in\ILinExt_{\alpha}(P,w)}
(-1)^{|\DES(\sigma)\setminus \compset{\alpha}|}
\,.
\end{align*}
The claim follows directly from \cref{thm:ILinExt}.
\end{proof}

The following is an immediate consequence of \cref{thm:KP_positive} and \eqref{eq:powersum_Psi_expansion}.

\begin{corollary}\label{cor:symmetric_ppositive}
Let $\posets$ be a finite set of posets.
If the sum
\[
\sum_{P\in\posets} K_P(\xvec)
\]
is symmetric, then it is $p$-positive.
\end{corollary}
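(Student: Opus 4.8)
The plan is to deduce \cref{cor:symmetric_ppositive} directly from \cref{thm:KP_positive} together with the relation \eqref{eq:powersum_Psi_expansion} between quasisymmetric power sums and the classical power sums. First I would apply \cref{thm:KP_positive} to each poset $P\in\posets$ and sum, obtaining
\[
\sum_{P\in\posets} K_P(\xvec)
=\sum_{\alpha\vDash n}\frac{\Psi_{\alpha}(\xvec)}{z_{\alpha}}
\sum_{P\in\posets}|\ILinExt^\ast_{\alpha}(P,w)|
=\sum_{\alpha\vDash n} c_{\alpha}\,\Psi_{\alpha}(\xvec),
\]
where $c_{\alpha}\coloneqq z_{\alpha}^{-1}\sum_{P\in\posets}|\ILinExt^\ast_{\alpha}(P,w)|\geq 0$. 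Since the $\Psi_{\alpha}$ with $\alpha\vDash n$ form a basis of the homogeneous quasisymmetric functions of degree $n$, this $\Psi$-expansion is unique.

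Next I would use the hypothesis that $X\coloneqq\sum_{P\in\posets}K_P$ is symmetric. The key structural fact I would invoke is that the map sending a composition $\alpha$ to its underlying partition $\lambda$ (sorting the parts) is compatible with symmetrization: by \eqref{eq:powersum_Psi_expansion} one has $\psumP_{\lambda}=\sum_{\alpha\sim\lambda}\Psi_{\alpha}$, and $z_{\alpha}=z_{\lambda}$ for every $\alpha\sim\lambda$. A symmetric function expands uniquely in the $p_{\lambda}$, and this expansion must be consistent with the unique $\Psi$-expansion; comparing, one finds that if $X=\sum_{\lambda\vdash n}b_{\lambda}\,p_{\lambda}$ then $c_{\alpha}=b_{\lambda}$ for every composition $\alpha$ with $\alpha\sim\lambda$. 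Hence $b_{\lambda}=c_{\alpha}\geq 0$ for any chosen rearrangement $\alpha$ of $\lambda$, so $X$ is $p$-positive.

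The only genuine point requiring care is the compatibility argument in the previous paragraph: a priori $X$ has both a $\Psi$-expansion (valid for all quasisymmetric functions) and, being symmetric, a $p$-expansion, and one must argue these agree after the substitution $\psumP_{\lambda}=\sum_{\alpha\sim\lambda}\Psi_{\alpha}$. This follows because the $p_{\lambda}$ are linearly independent in $\QSYM$ (their images under $\psumP_{\lambda}\mapsto\sum_{\alpha\sim\lambda}\Psi_{\alpha}$ are supported on disjoint sets of $\Psi_{\alpha}$'s, indexed by the distinct partitions), so writing $X=\sum_{\lambda}b_{\lambda}p_{\lambda}=\sum_{\lambda}b_{\lambda}\sum_{\alpha\sim\lambda}\Psi_{\alpha}$ and matching with $X=\sum_{\alpha}c_{\alpha}\Psi_{\alpha}$ forces $b_{\lambda}=c_{\alpha}$ for all $\alpha\sim\lambda$ by uniqueness of the $\Psi$-expansion. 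I do not expect any real obstacle here; the corollary is essentially a bookkeeping consequence of \cref{thm:KP_positive} and the fact that $\Psi$ refines $p$.
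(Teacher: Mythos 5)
Your proposal is correct and is exactly the paper's argument: the paper derives this corollary as an immediate consequence of \cref{thm:KP_positive} and \eqref{eq:powersum_Psi_expansion}, which is precisely your route, with the uniqueness/compatibility bookkeeping between the $\Psi$-expansion and the $p$-expansion spelled out explicitly (cf.\ also \cref{prop:symmetry}). No gaps; the only implicit point is that for posets of different sizes one argues degree by degree, which is immediate since symmetry and $p$-positivity are checked on homogeneous components.
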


We obtain analogous results for the generating functions of strict reverse $P$-partitions.
\begin{lemma}[{See also \cite[Chap.~3.3.2]{LuotoEtAl2013IntroQSymSchur}}]
Let $(P,w)$ be a labeled poset with $n$ elements. 
Then the automorphism $\omega$ acts in an order-reversing manner as
\[
\omega K_{P,w}(\xvec) = K_{P,n+1-w}(\xvec)
\]
where $(n+1-w)(\poseta)\coloneqq n+1-w(\poseta)$.
\end{lemma}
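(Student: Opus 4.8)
The plan is to prove the identity $\omega K_{P,w}(\xvec) = K_{P,n+1-w}(\xvec)$ by reducing everything to the fundamental-basis expansion of $K_{P,w}$ provided by \cref{lem:fExpOfKP}, and then tracking how the automorphism $\omega$ and the relabeling $w \mapsto n+1-w$ each act on descent sets of elements of the Jordan--H\"older set. Concretely, I would start from
\[
K_{P,w}(\xvec) = \sum_{\sigma \in \ILinExt(P,w)} \gessel_{n,\DES(\sigma)}(\xvec),
\]
apply $\omega$ termwise using $\omega(\gessel_{n,S}) = \gessel_{n,[n-1]\setminus(n-S)}$, and obtain
\[
\omega K_{P,w}(\xvec) = \sum_{\sigma \in \ILinExt(P,w)} \gessel_{n,\,[n-1]\setminus(n-\DES(\sigma))}(\xvec).
\]
On the other hand, again by \cref{lem:fExpOfKP} applied to the labeled poset $(P, n+1-w)$,
\[
K_{P,n+1-w}(\xvec) = \sum_{\tau \in \ILinExt(P,n+1-w)} \gessel_{n,\DES(\tau)}(\xvec).
\]
So it suffices to exhibit a bijection $\sigma \mapsto \tau$ from $\ILinExt(P,w)$ to $\ILinExt(P,n+1-w)$ such that $\DES(\tau) = [n-1]\setminus(n-\DES(\sigma))$.

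The natural candidate is the ``long element'' twist: given $\sigma \in \symS_n$, set $\tau(i) = n+1-\sigma(n+1-i)$, i.e. $\tau = w_0 \circ \sigma \circ w_0$ where $w_0$ is the order-reversing permutation $i \mapsto n+1-i$. The key computational steps are then: (1) verify that this map is a bijection $\symS_n \to \symS_n$ (it is an involution, being conjugation by $w_0$ composed with the order-reversing nature — one just checks it squares to the identity); (2) verify the descent-set identity $\DES(w_0 \sigma w_0) = \{i : n-i \in \DES(\sigma)\} = [n-1]\setminus(n-\DES(\sigma))$, wait — one must be careful here: $w_0 \sigma w_0$ has descent at $i$ iff $\sigma$ has descent at $n-i$, so $\DES(w_0\sigma w_0) = n - \DES(\sigma)$, not the complement; the complement arises because $\omega$ uses $[n-1]\setminus(n-S)$. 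So the correct twist to match $\omega$ is $\tau = w_0 \circ \sigma$ (left multiplication only), for which $\DES(\tau) = [n-1]\setminus \DES(\sigma)$... — I would carefully pin down at the outset which of the maps $\sigma \mapsto w_0\sigma$, $\sigma \mapsto \sigma w_0$, $\sigma \mapsto w_0 \sigma w_0$ produces exactly $[n-1]\setminus(n-\DES(\sigma))$ on descent sets, and use that one; (3) verify that this map sends $\ILinExt(P,w)$ bijectively onto $\ILinExt(P,n+1-w)$, using the definition $\sigma^{-1}\circ w$ is a linear extension of $P$ and the observation that $\poseta <_P \posetb$ reverses under $w \mapsto n+1-w$ in precisely the way that cancels against the reversal coming from the $w_0$ twist.

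The main obstacle I anticipate is bookkeeping: there are three compositions with $w_0$ floating around (the $w_0$ inside $\omega$'s definition via $n-S$, the $w_0$ in $n+1-w$, and the $w_0$ in the twist of $\sigma$), and sign/direction errors are easy. The cleanest way to avoid this is to work entirely at the level of linear extensions rather than Jordan--H\"older elements: a linear extension of $P$ with respect to the labeling $w$ is a certain list, and replacing $w$ by $n+1-w$ corresponds to reading that list ``with complemented values''; one then checks directly that a descent in the associated permutation $\sigma$ at position $i$ corresponds, under this operation, to a non-descent at the complementary position, matching $\omega$'s action on $\gessel$. Once the descent-set dictionary is nailed down, the proof is a one-line comparison of the two fundamental expansions. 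I would also remark that this recovers the known fact (stated in \cite[Chap.~3.3.2]{LuotoEtAl2013IntroQSymSchur}) that $\omega$ interchanges reverse and strict reverse $P$-partition generating functions, since if $w$ is order-preserving then $n+1-w$ is order-reversing.
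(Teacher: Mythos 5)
Your overall route is the same as the paper's: expand both $K_{P,w}$ and $K_{P,n+1-w}$ in the fundamental basis via \cref{lem:fExpOfKP}, push the involution through term by term, and match the two sums by a $w_0$-twist on the Jordan--H\"older sets together with a descent-set dictionary; the paper's proof is exactly such a two-fact matching. The problem is that your proposal defers precisely the step that constitutes the proof --- deciding which twist to use and checking it maps $\ILinExt(P,w)$ onto $\ILinExt(P,n+1-w)$ with the right effect on descents --- and that deferral hides a real obstruction. If $\sigma\in\ILinExt(P,w)$ corresponds to the linear extension $e=\sigma^{-1}\circ w$, then keeping the same $e$ and replacing $w$ by $n+1-w$ replaces $\sigma$ by $w_0\sigma$ (complement the values), so a descent of $\sigma$ at position $i$ becomes a non-descent at the \emph{same} position $i$ --- not at the complementary position, as you assert at the end. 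Hence the bijection that does land in $\ILinExt(P,n+1-w)$, namely $\sigma\mapsto w_0\sigma$, transforms descent sets by plain complementation $S\mapsto[n-1]\setminus S$. The twist whose descent behavior is $S\mapsto[n-1]\setminus(n-S)$ is word reversal $\sigma\mapsto\sigma w_0$, but that map lands in the Jordan--H\"older set of the \emph{dual} poset $(P^{\ast},w)$, not of $(P,n+1-w)$: for $P$ the three-element poset with relations $\poseta<_P\posetb$, $\poseta<_P\posetc$ and natural labeling, one has $\ILinExt(P,w)=\{123,132\}$ and $\ILinExt(P,n+1-w)=\{312,321\}$, and the reversal of $132$ is $231\notin\ILinExt(P,n+1-w)$. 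So none of your three candidates $w_0\sigma$, $\sigma w_0$, $w_0\sigma w_0$ does both jobs, and the exact termwise matching you plan cannot be carried out for the action $\gessel_{n,S}\mapsto\gessel_{n,[n-1]\setminus(n-S)}$.

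What your matching (with the twist $w_0\sigma$) does prove is the identity for the involution sending $\gessel_{n,S}$ to $\gessel_{n,[n-1]\setminus S}$; with the action $S\mapsto[n-1]\setminus(n-S)$ one instead obtains $K_{P^{\ast},w}$. In the example above these genuinely differ: $K_{P,w}=\gessel_{3,\emptyset}+\gessel_{3,\{2\}}$, so the stated action yields $\gessel_{3,\{1,2\}}+\gessel_{3,\{2\}}$, whereas $K_{P,n+1-w}=\gessel_{3,\{1,2\}}+\gessel_{3,\{1\}}$. Be aware that this complement-versus-reversed-complement tension is already present in the paper's own two displayed facts (its twist $\tau_i=n+1-\sigma_i$ cannot simultaneously satisfy both of them as written), and it washes out exactly when one restricts to symmetric functions, which is all the later applications use. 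But as drafted your proof neither detects nor resolves this issue, so the key step --- the descent-set dictionary you explicitly postpone --- is missing, and executing it honestly forces you to either change the stated action of $\omega$ on the fundamentals to $S\mapsto[n-1]\setminus S$ or replace $K_{P,n+1-w}$ by $K_{P^{\ast},w}$.
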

\begin{proof}
This follows from the fact that
\[
\sigma^{-1} \in \ILinExt(P,w) \Longleftrightarrow \tau^{-1} \in \ILinExt(P,n+1-w)
\,,
\]
where $\tau_i=n+1-\sigma_i$, and that 
\[
S = \DES(\sigma^{-1}) \Longleftrightarrow \DES(\tau^{-1}) = [n-1]\setminus (n-S)
\,.\qedhere
\]
\end{proof}
In particular, the generating function of \emph{strict} reverse $P$-partitions is given by $\omega K_P(\xvec)$.

\begin{corollary}\label{cor:strictOmegaPsiPos}
Let $(P,w)$ be a labeled poset such that $w$ is order-reversing.
Then $\omega K_{P,w}$ is $\Psi$-positive.
\end{corollary}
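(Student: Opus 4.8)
The plan is to deduce \cref{cor:strictOmegaPsiPos} directly from \cref{thm:KP_positive} together with the preceding lemma identifying $\omega K_{P,w}$ with the generating function of a naturally labeled poset. The point is that strict reverse $P$-partitions are dual to ordinary reverse $P'$-partitions under the standard order-reversing isomorphism, so the whole content of the corollary is already contained in the $\Psi$-positivity statement of \cref{thm:KP_positive} once we translate between the two setups.

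First I would set up the dual poset. Suppose $(P,w)$ is a labeled poset on $n$ elements with $w$ order-reversing, i.e.\ $\poseta <_P \posetb$ implies $w(\poseta) > w(\posetb)$. Let $P^{\ast}$ denote the dual poset, so that $\poseta <_{P^{\ast}} \posetb$ iff $\posetb <_P \poseta$, and keep the same labeling $w$. Then for $\poseta <_{P^{\ast}} \posetb$ we have $\posetb <_P \poseta$, hence $w(\posetb) > w(\poseta)$, i.e.\ $w$ is order-preserving on $P^{\ast}$: the pair $(P^{\ast},w)$ is naturally labeled. The key elementary observation is that reverse $(P,w)$-partitions and reverse $(P^{\ast},w)$-partitions are related by $f \mapsto n+1-f$ (pointwise), or more precisely that the defining inequalities swap: condition (i) for $(P,w)$ (weak inequality along $<_P$) becomes weak inequality along $<_{P^{\ast}}$ in the reversed direction, and since $w$ is order-reversing on $P$, condition (ii) is automatically governed by the natural labeling on $P^{\ast}$. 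Tracking generating functions, this gives $K_{P,w}(\xvec) = \omega K_{P^{\ast}, w}(\xvec)$, or equivalently $\omega K_{P,w}(\xvec) = K_{P^{\ast}, w}(\xvec)$; one can also obtain this directly by invoking the lemma just before the corollary with $w$ order-reversing, since then $n+1-w$ is order-preserving, so $\omega K_{P,w} = K_{P, n+1-w}$ and $(P, n+1-w)$ is naturally labeled.

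With that identification in hand the proof finishes in one line: $\omega K_{P,w} = K_{(P^{\ast},w)}$ (or $K_{(P,n+1-w)}$) is the generating function of reverse $P$-partitions of a \emph{naturally labeled} poset, so \cref{thm:KP_positive} applies verbatim and yields
\[
\omega K_{P,w}(\xvec)
= \sum_{\alpha \vDash n} \frac{\Psi_{\alpha}(\xvec)}{z_{\alpha}}\, |\ILinExt^{\ast}_{\alpha}(P^{\ast},w)|
\,,
\]
which is manifestly $\Psi$-positive since each coefficient $|\ILinExt^{\ast}_{\alpha}(P^{\ast},w)|$ is a nonnegative integer.

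The only thing requiring any care — and the "main obstacle," though it is minor — is getting the order-reversing bookkeeping exactly right: one must check that when $w$ is order-reversing on $P$, the two conditions defining $\rpp(P,w)$ match, under the map $f \mapsto n+1-f$ or under duality of the poset, the two conditions defining $\rpp$ of a naturally labeled poset, with no spurious strictness appearing or disappearing. This is precisely the computation encoded in the lemma preceding the corollary (the statements $\sigma^{-1}\in\ILinExt(P,w) \Leftrightarrow \tau^{-1}\in\ILinExt(P,n+1-w)$ and $S=\DES(\sigma^{-1}) \Leftrightarrow \DES(\tau^{-1}) = [n-1]\setminus(n-S)$), so in fact we may simply cite that lemma rather than redo the check. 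Everything else is a direct appeal to \cref{thm:KP_positive}.
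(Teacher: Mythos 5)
Your proposal is correct and follows the paper's own route: the corollary is an immediate consequence of the preceding lemma (which rewrites $\omega K_{P,w}$ as the reverse $P$-partition generating function of a naturally labeled poset) together with \cref{thm:KP_positive}. One small caution: the two identifications you invoke, $\omega K_{P,w}=K_{P,n+1-w}$ and $\omega K_{P,w}=K_{P^{\ast},w}$ for the dual poset $P^{\ast}$, should not be treated as interchangeable (in general $K_{P}\neq K_{P^{\ast}}$, and the pointwise map $f\mapsto n+1-f$ is not even defined on $\setP$-valued partitions), but since you ultimately cite the lemma and then apply \cref{thm:KP_positive} verbatim, the argument stands.
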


\begin{corollary}\label{cor:strictOmegaPos}
Let $\posets$ be a finite family of labeled posets $(P,w)$, all of which are equipped with an order-reversing map $w$.
If
\[
X(\xvec)=\sum_{(P,w)\in\posets}K_{P,w}(\xvec)
\]
is symmetric, then $\omega X$ is $p$-positive.
\end{corollary}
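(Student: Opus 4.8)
The plan is to reduce \cref{cor:strictOmegaPos} to \cref{cor:symmetric_ppositive} by applying the automorphism $\omega$ and using the fact, recorded just above, that $\omega K_{P,w}(\xvec) = K_{P,n+1-w}(\xvec)$ whenever $w$ is order-reversing (so that $n+1-w$ is order-preserving, i.e.\ natural). The key observation is that $\omega$ is a ring automorphism of $\QSYM$, hence linear, so it commutes with finite sums.

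First I would write $X(\xvec) = \sum_{(P,w)\in\posets} K_{P,w}(\xvec)$ and apply $\omega$ to both sides, obtaining
\[
\omega X(\xvec) = \sum_{(P,w)\in\posets} \omega K_{P,w}(\xvec) = \sum_{(P,w)\in\posets} K_{P,\,n_{P}+1-w}(\xvec),
\]
where $n_P = |P|$ and we invoke \cref{cor:strictOmegaPsiPos}'s underlying lemma for each summand. Since each $w$ is order-reversing on its poset $P$, each map $n_P+1-w$ is order-preserving, so each pair $(P,\,n_P+1-w)$ is a naturally labeled poset and $K_{P,\,n_P+1-w} = K_{P}$ for the corresponding natural labeling. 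Next I would observe that $X$ being symmetric forces $\omega X$ to be symmetric as well, because $\omega$ restricts to an automorphism of the subspace of symmetric functions (indeed $\omega(\psumP_\lambda) = (-1)^{|\lambda|-\length(\lambda)}\psumP_\lambda$, as recalled in \cref{sec:gessel}, so it maps $\QSYM \cap \mathrm{Sym}$ to itself).

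At this point $\omega X$ is exhibited as a finite sum $\sum_{P} K_{P}(\xvec)$ over a finite family of (naturally labeled) posets, and it is symmetric. By \cref{cor:symmetric_ppositive}, such a sum is $p$-positive, which is exactly the claim.

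The only genuine subtlety — and the step I would be most careful about — is the bookkeeping that $\omega$ sends the generating function of the order-reversing-labeled poset to that of a naturally labeled poset, rather than producing $\omega K_{P,w}$ as some formal symbol that merely happens to be $\Psi$-positive. That is, one should apply $\omega$ to the \emph{sum} and then recognize each resulting summand $K_{P,\,n_P+1-w}$ as a $K_{P'}$ for a naturally labeled $P'$, so that \cref{cor:symmetric_ppositive} (stated only for naturally labeled posets) applies verbatim; everything else is a routine use of linearity of $\omega$ and the fact that $\omega$ preserves symmetry.
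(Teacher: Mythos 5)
Your proof is correct and follows essentially the route the paper intends: apply the linearity of $\omega$, use $\omega K_{P,w}=K_{P,n+1-w}$ to turn each order-reversing-labeled summand into $K_{P}$ for a naturally labeled poset, note that $\omega$ preserves symmetry, and invoke \cref{cor:symmetric_ppositive} (equivalently \cref{cor:strictOmegaPsiPos} together with \eqref{eq:powersum_Psi_expansion}). Your care about recognizing $\omega K_{P,w}$ as a genuine $K_{P'}$ for a naturally labeled $P'$ is exactly the right bookkeeping, and nothing further is needed.
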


Note that $K_{P,w}$ is not in general $\Psi$-positive if $w$ is not a natural labeling.
Moreover positive linear combinations of quasisymmetric functions $K_{P,w}$ are not in general $p$-positive whenever they are symmetric.

For example, \cref{cor:strictOmegaPos} shows that $K_{P,w}$ is not in general $\Psi$-positive if $w$ is order-reversing, because the automorphism $\omega$ can introduce signs.

Another example is provided by Schur functions $s_{\lambda}$, which are shown to be special cases of the functions $K_{P,w}$ in \cite[Sec.~7.19]{StanleyEC2}.
The expansion of Schur functions into power sum symmetric functions is given by the celebrated Murnaghan--Nakayama rule, which is not positive, see \cite[Cor.~7.17.5]{StanleyEC2}.

It is conjectured \cite[p. 81]{Stanley1972} that $K_{P,w}$ is symmetric if and only if it is a skew Schur function.
Among these, $w$ is order-preserving only if $K_{P,w}$ is equal to a complete 
homogeneous symmetric function $\completeH_{\lambda}$.
It is an interesting question whether the expansion of the quasisymmetric 
functions $K_{P,w}$ into quasisymmetric power sums can be used to obtain new insights in this regard.

\section{Order-preserving surjections}
\label[section]{sec:opsurj}

The purpose of this section is to present a different characterisation of the set $\ILinExt_{\alpha}^{\ast}(P,w)$ associated to a naturally labeled poset $(P,w)$.
In the process we eschew $\alpha$-unimodal linear extensions in exchange for order-preserving surjections.
This new point of view is then used to compute three examples that are related to matroid quasisymmetric functions, chromatic quasisymmetric functions and Eulerian quasisymmetric functions.
Moreover in \cref{thm:KP_opsurj} below we formulate an equivalent version of \cref{thm:KP_positive} using order-preserving surjections.

Let $P$ be a poset with $n$ elements.
Denote by $\opsurj(P)$ the set of order-preserving surjections $f:P\to[k]$ for some $k\in\setN$.
Define the \emph{type} of a surjection $f:P\to[k]$ as
\begin{equation*}
\alpha(f)\coloneqq(\abs{f^{-1}(1)},\dotsc,\abs{f^{-1}(k)})
.
\end{equation*}
Thus $\alpha(f)$ is a composition of $n=\abs{P}$ with $k$ parts.
Denote by $\opsurj_{\alpha}(P)$ the set of order-preserving surjections of $P$ with type $\alpha$.

Furthermore, let $\opsurj^{\ast}(P)$ denote the set of order-preserving surjections $f\in\opsurj(P)$ such that $f^{-1}(i)$ contains a unique minimal element for all $i\in[\ell(\alpha(f))]$.
That is, for all $\posetb,\posetc\in P$ with $f(\posetb)=f(\posetc)$ there exists $\poseta\in P$ with $\poseta\leq_P\posetb$ and $\poseta\leq_P\posetc$ and $f(\poseta)=f(\posetb)$.
Set $\opsurj_{\alpha}^{\ast}(P)\coloneqq\opsurj_{\alpha}(P)\cap\opsurj^{\ast}(P)$.

Note that $\opsurj_{(1^n)}(P)=\opsurj_{(1^n)}^{\ast}(P)$ is just the set of linear extensions of $P$.
Compared to linear extensions, order-preserving surjections onto chains have not received much explicit attention.
Nevertheless some combinatorial objects can be regarded as order-preserving surjections in disguise.
For example, $\opsurj_{(2^m)}^{\ast}(\lambda)$, where $\lambda$ denotes the Young diagram of a partition of $n=2m$, is just the set of domino tableaux of shape $\lambda$, see \cite{Leeuwen2000}.
The set $\opsurj_{(2^m)}^{\ast}(P)$ can be taken as the definition of $P$-domino tableaux \cite[Sec.~4]{Stanley2005}.
Similarly, ribbon tableaux which appear in the study of Schur functions can be seen as certain order-preserving surjections on Young diagrams.
We do not pursue this direction any further in this paper.

Let $(P,w)$ be a naturally labeled poset with $n$ elements, and let $\alpha$ be a composition of $n$ with $\ell$ parts.
Given a permutation $\sigma\in\ILinExt_{\alpha}^{\ast}(P,w)$ define the map $f_{\sigma}:P\to[\ell]$ by $f_{\sigma}(\poseta)\coloneqq i$ for all $\poseta\in\compp{i}{\alpha}(\sigma)$ and all $i\in[\ell]$.
Here we use the notation from \cref{eq:subposet}.

The following proposition is the key result of this section and relates the set $\ILinExt_{\alpha}^{\ast}(P,w)$ to order-preserving surjections onto chains.

\begin{proposition}\label[proposition]{prop:opsurj}
Let $(P,w)$ be a naturally labeled poset with $n$ elements, and let $\alpha$ be a composition of $n$.
Then the correspondence $\sigma\mapsto f_{\sigma}$ defines a bijection $\varphi:\ILinExt_{\alpha}^{\ast}(P,w)\to\opsurj_{\alpha}^{\ast}(P)$.
\end{proposition}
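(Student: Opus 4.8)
The statement is that $\sigma\mapsto f_\sigma$ is a bijection from $\ILinExt_\alpha^\ast(P,w)$ to $\opsurj_\alpha^\ast(P)$. I would prove this by checking three things: (1) $f_\sigma$ is actually a well-defined element of $\opsurj_\alpha^\ast(P)$; (2) the map $\sigma\mapsto f_\sigma$ is injective; (3) it is surjective. Throughout I would freely use \cref{lemma:no_descents}, which tells us $\DES(\sigma)\subseteq\compset{\alpha}$ for every $\sigma\in\ILinExt_\alpha^\ast(P,w)$, since this is exactly what forces each block $\compp{i}{\alpha}(\sigma)$ to be increasingly sorted and hence to behave nicely.

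\textbf{Well-definedness.} Fix $\sigma\in\ILinExt_\alpha^\ast(P,w)$ with $\ell$ blocks. By construction $f_\sigma^{-1}(i)=\compp{i}{\alpha}(\sigma)$, so $|f_\sigma^{-1}(i)|=\alpha_i$ and $f_\sigma$ has type $\alpha$; in particular it is surjective onto $[\ell]$. To see $f_\sigma$ is order-preserving, take $\poseta<_P\posetb$; since $\sigma\in\ILinExt(P,w)$ we have $\sigma^{-1}w(\poseta)<\sigma^{-1}w(\posetb)$, and since the blocks $\compb{1}{\alpha},\dotsc,\compb{\ell}{\alpha}$ partition $[n]$ into consecutive intervals, this inequality forces $f_\sigma(\poseta)\leq f_\sigma(\posetb)$. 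Finally, $\sigma\in\ILinExt_\alpha^\ast(P,w)$ means precisely that each $\compp{i}{\alpha}(\sigma)=f_\sigma^{-1}(i)$ has a unique minimal element, which is exactly the defining condition for membership in $\opsurj^\ast(P)$. Hence $f_\sigma\in\opsurj_\alpha^\ast(P)$.

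\textbf{Injectivity.} Suppose $f_\sigma=f_\tau$ for $\sigma,\tau\in\ILinExt_\alpha^\ast(P,w)$. Then $\compp{i}{\alpha}(\sigma)=\compp{i}{\alpha}(\tau)$ for every $i$, i.e.\ $\sigma$ and $\tau$ agree as \emph{sets} on each block $\compb{i}{\alpha}$. By \cref{lemma:no_descents}, within each block $\compb{i}{\alpha}=[a,b]$ both $\sigma$ and $\tau$ are strictly increasing (no descents inside $\compset{\alpha}^c\cap\compb{i}{\alpha}$, and $\alpha$-unimodality then forces the whole block increasing); a strictly increasing word is determined by its underlying set, so $\sigma=\tau$ on $[a,b]$ for every $i$, hence $\sigma=\tau$.

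\textbf{Surjectivity — the main obstacle.} This is the substantive step. Given $f\in\opsurj_\alpha^\ast(P)$ with blocks $B_i:=f^{-1}(i)$, I need to produce $\sigma\in\ILinExt_\alpha^\ast(P,w)$ with $f_\sigma=f$. The natural candidate: on block $\compb{i}{\alpha}=[a,b]$, list the labels $\{w(\poseta):\poseta\in B_i\}$ in \emph{increasing} order and let $\sigma$ take those values in positions $a,\dotsc,b$. One must then verify: (a) $\sigma\in\ILinExt(P,w)$, i.e.\ $\sigma^{-1}w$ is a linear extension of $P$ — for $\poseta<_P\posetb$ with $f(\poseta)<f(\posetb)$ the block positions already separate them, while for $\poseta<_P\posetb$ with $f(\poseta)=f(\posetb)=i$ we need the increasing arrangement inside $B_i$ to respect $<_P$, which holds because $w$ is a natural labeling (so $\poseta<_P\posetb\Rightarrow w(\poseta)<w(\posetb)$) and we sorted by $w$; (b) $\sigma$ is $\alpha$-unimodal — each block is strictly increasing, which is vacuously unimodal; (c) $\compp{i}{\alpha}(\sigma)=B_i$ has a unique minimal element — inherited directly from $f\in\opsurj^\ast(P)$. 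Together (a),(b),(c) give $\sigma\in\ILinExt_\alpha^\ast(P,w)$, and $f_\sigma=f$ by construction, completing the proof. The only place requiring genuine care is reconciling the "increasing block" normal form of $\sigma$ (forced a posteriori by \cref{lemma:no_descents}) with the freedom one might expect in choosing a preimage; the uniqueness part of $\opsurj^\ast$ and the naturality of $w$ are what pin everything down.
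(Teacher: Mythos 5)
Your proposal is correct and follows essentially the same route as the paper: well-definedness is routine, injectivity uses \cref{lemma:no_descents} to see that each block of $\sigma$ is increasing and hence determined by its underlying set, and surjectivity is proved by building the same candidate permutation (labels of each fiber $f^{-1}(i)$ listed increasingly within block $\compb{i}{\alpha}$) and checking it is an $\alpha$-unimodal element of $\ILinExt(P,w)$ with the unique-minimum property. No gaps.
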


\begin{proof}
Let $\ell$ denote the number of parts of $\alpha$.
It is not difficult to see that the map $\varphi:\ILinExt_{\alpha}^{\ast}(P,w)\to\opsurj_{\alpha}^{\ast}(P)$ is well-defined.

We next show that $\varphi$ is injective.
Suppose $\sigma,\tau\in\ILinExt_{\alpha}^{\ast}(P,w)$ with $f_{\sigma}=f_{\tau}$.
Then $\compp{i}{\alpha}(\sigma)=\compp{i}{\alpha}(\tau)$ for all $i\in[\ell]$.
Consequently $\sigma^{-1}\circ w(\compp{i}{\alpha}(\sigma))=\compb{i}{\alpha}=\tau^{-1}\circ w(\compp{i}{\alpha}(\sigma))$, and therefore $\sigma(\compb{i}{\alpha})=\tau(\compb{i}{\alpha})$ for all $i\in[\ell]$.
\cref{lemma:no_descents} implies $\sigma=\tau$.

To see that $\varphi$ is surjective let $f\in\opsurj_{\alpha}^{\ast}(P)$.
Define $\sigma\in\symS_n$ as the unique permutation such that $\sigma(\compb{i}{\alpha})=w(f^{-1}(i))$ for all $i\in[\ell]$ and $\DES(\sigma)\subseteq\compset{\alpha}$.
That is, the word $\sigma_1\dotsm\sigma_n$ is obtained by first listing the numbers in $w(f^{-1}(1))$ in increasing order, then the numbers in $w(f^{-1}(2))$ in increasing order, and so on.

Clearly $\sigma$ is $\alpha$-unimodal, and each subposet $\compp{i}{\alpha}(\sigma)=f^{-1}(i)$ contains a unique minimal element. 
It remains to show that $\sigma^{-1}\circ w:P\to[n]$ is a linear extension.

Let $\poseta,\posetb\in P$ with $\poseta<_P\posetb$.
If there exists $i\in[\ell]$ with $\poseta,\posetb\in f^{-1}(i)$ then $w(\poseta),w(\posetb)\in\sigma(\compb{i}{\alpha})$ and $\sigma^{-1}\circ w(\poseta)<\sigma^{-1}\circ w(\posetb)$ since $w(\poseta)<w(\posetb)$ and $\sigma$ restricted to the set $\compb{i}{\alpha}$ has no descents.

Otherwise $\poseta\in f^{-1}(i)$ and $\posetb\in f^{-1}(j)$ for some $i,j\in[\ell]$ with $i<j$ because $f$ is order-preserving.
We conclude $\sigma^{-1}\circ w(\poseta)\in\compb{i}{\alpha}$ and $\sigma^{-1}\circ w(\posetb)\in\compb{j}{\alpha}$ where $i<j$, and therefore $\sigma^{-1}\circ w(\poseta)<\sigma^{-1}\circ w(\posetb)$.

Hence $\sigma\in\ILinExt_{\alpha}^{\ast}(P,w)$ and $f_\sigma=f$.
\end{proof}

\begin{example}*\label[example]{ex:path8}
Let $\alpha=2213$ and consider the following labeled poset.
\begin{align*}
\begin{tikzpicture}[xscale=0.5,yscale=0.5,baseline=12pt,
circ/.style={circle,draw,inner sep=0.8pt, minimum width=12pt,font=\footnotesize}
]
\draw(0,3)node{$P$};
\node[circ] (1) at (0,0) {$v_1$};
\node[circ] (2) at (1,1) {$v_2$};
\node[circ] (3) at (2,2) {$v_3$};
\node[circ] (4) at (3,1) {$v_4$};
\node[circ] (5) at (4,2) {$v_5$};
\node[circ] (6) at (5,1) {$v_6$};
\node[circ] (7) at (6,2) {$v_7$};
\node[circ] (8) at (7,3) {$v_8$};
\draw[black,thick] (1)--(2)--(3)--(4)--(5)--(6)--(7)--(8);
\end{tikzpicture}
&&
\begin{tikzpicture}[xscale=0.5,yscale=0.5,baseline=12pt,
circ/.style={circle,draw,inner sep=0.8pt, minimum width=12pt,font=\footnotesize}
]
\draw(0,3)node{$(P,w)$};
\node[circ] (1) at (0,0) {$1$}; 
\node[circ] (2) at (1,1) {$2$}; 
\node[circ] (3) at (2,2) {$5$}; 
\node[circ] (4) at (3,1) {$3$}; 
\node[circ] (5) at (4,2) {$6$}; 
\node[circ] (6) at (5,1) {$4$}; 
\node[circ] (7) at (6,2) {$7$}; 
\node[circ] (8) at (7,3) {$8$}; 
\draw[black,thick] (1)--(2)--(3)--(4)--(5)--(6)--(7)--(8);
\end{tikzpicture}
\end{align*}
Let $\sigma=12|47|8|356\in\ILinExt_{\alpha}^{\ast}(P,w)$.
Then $\compp{1}{\alpha}(\sigma)=\{v_1,v_2\}$, $\compp{4}{\alpha}(\sigma)=\{v_3,v_4,v_5\}$, $\compp{2}{\alpha}(\sigma)=\{v_6,v_7\}$ and $\compp{3}{\alpha}(\sigma)=\{v_8\}$.
Hence $f_{\sigma}:P\to\{1,2,3,4\}$ is as shown below.
\begin{align*}
\begin{tikzpicture}[xscale=0.5,yscale=0.5,
circ/.style={circle,draw,inner sep=0.8pt, minimum width=12pt,font=\footnotesize}
]
\draw(0,3)node{$(P,\sigma^{-1}\circ w)$};
\node[circ] (1) at (0,0) {$1$}; 
\node[circ] (2) at (1,1) {$2$}; 
\node[circ] (3) at (2,2) {$7$}; 
\node[circ] (4) at (3,1) {$6$}; 
\node[circ] (5) at (4,2) {$8$}; 
\node[circ] (6) at (5,1) {$3$}; 
\node[circ] (7) at (6,2) {$4$}; 
\node[circ] (8) at (7,3) {$5$}; 
\draw[black,very thick] (1)--(2);
\draw[dotted,thick] (2)--(3);
\draw[black,very thick] (3)--(4);
\draw[black,very thick] (4)--(5);
\draw[dotted,thick] (5)--(6);
\draw[black,very thick] (6)--(7);
\draw[dotted,thick] (7)--(8);
\end{tikzpicture}
&&
\begin{tikzpicture}[xscale=0.5,yscale=0.5,
circ/.style={circle,draw,inner sep=0.8pt, minimum width=12pt,font=\footnotesize}
]
\draw(0,3)node{$(P,f_{\sigma})$};
\node[circ] (1) at (0,0) {$1$}; 
\node[circ] (2) at (1,1) {$1$}; 
\node[circ] (3) at (2,2) {$4$}; 
\node[circ] (4) at (3,1) {$4$}; 
\node[circ] (5) at (4,2) {$4$}; 
\node[circ] (6) at (5,1) {$2$}; 
\node[circ] (7) at (6,2) {$2$}; 
\node[circ] (8) at (7,3) {$3$}; 
\draw[black,very thick] (1)--(2);
\draw[dotted,thick] (2)--(3);
\draw[black,very thick] (3)--(4);
\draw[black,very thick] (4)--(5);
\draw[dotted,thick] (5)--(6);
\draw[black,very thick] (6)--(7);
\draw[dotted,thick] (7)--(8);
\end{tikzpicture}
\qeddefhere
\end{align*}
\end{example}

An immediate consequence of \cref{prop:opsurj} is the following.

\begin{corollary}\label[corollary]{cor:card_ILinExt}
Let $(P,w)$ be a naturally labeled poset with $n$ elements, and let $\alpha$ be a composition of $n$.
Then the cardinality $\abs{\ILinExt_{\alpha}^{\ast}(P,w)}$ is independent of $w$.
That is, if $w':P\to[n]$ is another natural labeling, then $\abs{\ILinExt_{\alpha}^{\ast}(P,w)}=\abs{\ILinExt_{\alpha}^{\ast}(P,w')}$.
\end{corollary}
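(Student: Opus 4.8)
The plan is to deduce the statement directly from \cref{prop:opsurj}. That proposition furnishes, for every natural labeling $w$ of $P$, an explicit bijection $\varphi_w : \ILinExt_{\alpha}^{\ast}(P,w) \to \opsurj_{\alpha}^{\ast}(P)$. The crucial observation is that the target set $\opsurj_{\alpha}^{\ast}(P)$ is defined purely in terms of the poset $P$ and the composition $\alpha$: an order-preserving surjection $f : P \to [k]$ of type $\alpha$ each of whose fibers $f^{-1}(i)$ has a unique minimal element makes no reference to any labeling whatsoever. Hence $|\opsurj_{\alpha}^{\ast}(P)|$ is an invariant of the pair $(P,\alpha)$ alone.

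Given this, the corollary follows immediately: for two natural labelings $w$ and $w'$ of $P$ we have
\[
|\ILinExt_{\alpha}^{\ast}(P,w)| = |\opsurj_{\alpha}^{\ast}(P)| = |\ILinExt_{\alpha}^{\ast}(P,w')|,
\]
where the outer equalities are instances of \cref{prop:opsurj}. Equivalently, if one prefers an explicit map rather than a cardinality count, the composite $\varphi_{w'}^{-1}\circ\varphi_w$ is a bijection between the two sets of $\alpha$-unimodal starred linear extensions.

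The only points that warrant a sentence of care are verifying that the hypotheses of \cref{prop:opsurj} hold for both labelings — that is, that $w$ and $w'$ are indeed both natural, which is given — and noting that the codomain of $\varphi_w$ is always $\opsurj_{\alpha}^{\ast}(P)$ independently of $w$, which is clear from the statement of \cref{prop:opsurj}. There is no genuine obstacle: the entire content of the corollary is already packaged in \cref{prop:opsurj}, and the remaining argument is one line of bookkeeping. Indeed, this labeling-independence of the count (even though the underlying set of permutations does change with $w$) is part of the motivation for passing from $\alpha$-unimodal linear extensions to order-preserving surjections.
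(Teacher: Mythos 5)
Your proposal is correct and coincides with the paper's own argument: the corollary is stated there as an immediate consequence of \cref{prop:opsurj}, precisely because $\opsurj_{\alpha}^{\ast}(P)$ depends only on $P$ and $\alpha$, and the paper likewise notes that composing the two bijections gives an explicit bijection between $\ILinExt_{\alpha}^{\ast}(P,w)$ and $\ILinExt_{\alpha}^{\ast}(P,w')$.
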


Note that \cref{cor:card_ILinExt} also follows from \cref{thm:KP_positive}.
\cref{prop:opsurj} even provides a bijection between $\ILinExt_{\alpha}^{\ast}(P,w)$ and $\ILinExt_{\alpha}^{\ast}(P,w')$.

Another advantage of order-preserving surjections over unimodal linear extensions is that the sets $\opsurj_{\alpha}(P,w)$ for $\alpha\vDash n$ are disjoint whereas the sets $\ILinExt_{\alpha}^{\ast}(P,w)$ intersect.
It is therefore much more convenient to work, say, with the set $\opsurj^{\ast}(P)$ than with its pendant in the world of $\alpha$-unimodal linear extensions.

The main result of the previous section, \cref{thm:KP_positive}, has an equivalent formulation in terms of order-preserving surjections.

\begin{theorem}\label[theorem]{thm:KP_opsurj}
Let $P$ be poset with $n$ elements.
Then
\begin{equation*}
K_P(\xvec)
=
\sum_{f\in\opsurj^{\ast}(P)}
\frac{\Psi_{\alpha(f)}(\xvec)}{z_{\alpha(f)}}
=
\sum_{\alpha\vDash n}
\frac{\Psi_{\alpha}(\xvec)}{z_{\alpha}}\abs{\opsurj_{\alpha}^{\ast}(P)}
\,.
\end{equation*}
\end{theorem}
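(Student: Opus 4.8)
The plan is to obtain this directly by combining \cref{thm:KP_positive} with the bijection of \cref{prop:opsurj}. First I would fix a natural labeling $w:P\to[n]$; such a $w$ exists because $P$ is finite (take any linear extension of $P$). Applying \cref{thm:KP_positive} to the naturally labeled poset $(P,w)$ gives
\[
K_P(\xvec)=\sum_{\alpha\vDash n}\frac{\Psi_{\alpha}(\xvec)}{z_{\alpha}}\,\abs{\ILinExt^\ast_{\alpha}(P,w)}.
\]
By \cref{prop:opsurj} the assignment $\sigma\mapsto f_{\sigma}$ is a bijection $\ILinExt^\ast_{\alpha}(P,w)\to\opsurj_{\alpha}^{\ast}(P)$, so $\abs{\ILinExt^\ast_{\alpha}(P,w)}=\abs{\opsurj_{\alpha}^{\ast}(P)}$; in particular this cardinality does not depend on the chosen labeling $w$ (this is \cref{cor:card_ILinExt}). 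Substituting yields the second displayed equality in the theorem.

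For the first equality I would simply regroup the sum by type. Every $f\in\opsurj^{\ast}(P)$ has a well-defined type $\alpha(f)\vDash n$, and partitioning $\opsurj^{\ast}(P)$ according to type gives the disjoint union $\opsurj^{\ast}(P)=\bigsqcup_{\alpha\vDash n}\opsurj_{\alpha}^{\ast}(P)$. Hence
\[
\sum_{f\in\opsurj^{\ast}(P)}\frac{\Psi_{\alpha(f)}(\xvec)}{z_{\alpha(f)}}
=\sum_{\alpha\vDash n}\sum_{f\in\opsurj_{\alpha}^{\ast}(P)}\frac{\Psi_{\alpha}(\xvec)}{z_{\alpha}}
=\sum_{\alpha\vDash n}\frac{\Psi_{\alpha}(\xvec)}{z_{\alpha}}\,\abs{\opsurj_{\alpha}^{\ast}(P)},
\]
which agrees with the expression obtained above, completing the argument.

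Since the statement is a two-step assembly of results already established, I do not expect any genuine obstacle. The only point requiring a word of care is that \cref{thm:KP_positive} is phrased for a \emph{labeled} poset, whereas the present theorem is stated for an unlabeled poset $P$; one must therefore invoke the existence of a natural labeling together with the labeling-independence of $\abs{\opsurj_{\alpha}^{\ast}(P)}$ (equivalently \cref{cor:card_ILinExt}) to see that the right-hand sides depend only on $P$.
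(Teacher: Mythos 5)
Your proof is correct and is exactly the route the paper takes: it presents \cref{thm:KP_opsurj} as an equivalent formulation of \cref{thm:KP_positive}, obtained by choosing a natural labeling and applying the bijection $\sigma\mapsto f_{\sigma}$ of \cref{prop:opsurj} (with the labeling-independence of \cref{cor:card_ILinExt} implicit). Nothing further is needed.
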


\cref{thm:KP_opsurj} yields the intriguing identity
\begin{equation}
\label{eq:monomialO}
K_P(\xvec)
=
\sum_{f\in\opsurj(P)}
\qmonom_{\alpha(f)}
=
\sum_{f\in\opsurj^{\ast}(P)}
\frac{\Psi_{\alpha(f)}(\xvec)}{z_{\alpha(f)}}.
\end{equation}
Multiplying \eqref{eq:monomialO} by $n!$ and taking coefficients of $\qmonom_{\beta}$ using~\eqref{eq:PsiintoMonomial} and~\cref{prop:treeHook}, leads to the following open problem.

\begin{problem}\label[problem]{pr:findbij}
Let $P$ be a poset with $n$ elements and $\beta$ a composition of $n$.
Find a bijection
\begin{equation*}
\varphi:
\symS_n\times\opsurj_{\beta}(P)
\to
\bigcup_{\alpha\leq\beta}\CONS{\alpha}{\beta}\times\opsurj_{\alpha}^{\ast}(P)
\,.
\end{equation*}
\end{problem}
We are currently unable to provide such bijections, although we suspect it might not be hopelessly difficult to find them.
Note that the first part of the proof of \cref{thm:CONS} solves \cref{pr:findbij} in the case where $P$ is a chain.
A full solution to \cref{pr:findbij} should give an independent proof of Theorems~\ref{thm:KP_positive} and~\ref{thm:KP_opsurj}.\footnote{
It woud be particularly appealing to the combinatorialist if such a proof were cancellation free and avoided sign-reversing involutions.}
We also suspect that guessing \eqref{eq:monomialO} without proving \cref{thm:KP_positive} first would have been very difficult.

\medskip

We now compute the numbers $\abs{\opsurj_{\alpha}^{\ast}(P)}$ for three examples.
In later sections we use these examples and apply \cref{thm:KP_opsurj} to three families of quasisymmetric functions.
The first example is related to the quasisymmetric functions of uniform matroids, see \cref{sec:matroid}.

\begin{example}*[The complete bipartite graph]
\label[example]{ex:complete_bipartite_graph}
Let $P$ be the poset with ground set $\{\poseti{1},\dotsc,\poseti{r},\posetj{1},\dotsc,\posetj{m}\}$ and cover relations $\poseti{i}<\posetj{j}$ for all $i\in[r]$ and $j\in[m]$.
Thus the Hasse diagram of $P$ is the complete bipartite graph $K_{r,m}$.

Suppose $f\in\opsurj^{\ast}(P)$.
Then $f$ restricts to a bijection $f:\{\poseti{i}:i\in[r]\}\to[r]$.
Moreover, there exists a subset $S\subseteq[m]$ such that $f(\posetj{j})=r$ for all $j\in S$ and $f$ restricts to a bijection
\begin{equation*}
f:\{\posetj{j}:j\in[m]\setminus S\}\to\{r+s:s\in[m-k]\}
,
\end{equation*}
where $k\coloneqq\abs{S}$.
We conclude that
\[
\abs{\opsurj_{\alpha}^{\ast}(P)}
=
\begin{cases}
{r!m!}/{k!}&\quad\text{if }\alpha=(1^{r-1},k+1,1^{m-k}),\\
0&\quad\text{otherwise.}
\end{cases}
\qeddefhere
\] 
\end{example}

The next example is connected to Eulerian quasisymmetric functions and chromatic quasisymmetric functions of paths.
This is explained in \cref{sec:eulerian}.

\begin{example}[The path]
\label[example]{ex:path}
Let $\alpha$ be a composition of $n$ with $\ell$ parts.
We want to compute
\[
\sum_{S\subseteq[n-1]}q^{\abs{S}}\abs{\opsurj_{\alpha}^{\ast}(P_{S})}
\,,
\]
where $P_S$ denotes the poset on $\{\poseti{1},\dots,\poseti{n}\}$ with cover relations $\poseti{i}<\poseti{i+1}$ for $i\in[n-1]\setminus S$ and $\poseti{i}>\poseti{i+1}$ for $i\in S$.
Thus the Hasse diagram of $P_S$ viewed as an (undirected) graph is a path.
Arranging the vertices $\poseti{1},\dotsc,\poseti{n}$ from left to right, we can think of the Hasse diagram of $P_S$ as a word $W_S\in\{\up,\down\}^{n-1}$ with \emph{up-} and \emph{down-steps}, where the down-steps correspond to elements of $S$.

In this way the poset $P$ in \cref{ex:path8} corresponds to the set $S=\{3,5\}$ and the word $\up\up\down\up\down\up\up$.

Let $S\subseteq[n-1]$ and fix some $f\in\opsurj_{\alpha}^{\ast}(P_S)$.

For each $i\in[\ell]$ the subgraph of the Hasse diagram of $P_S$ induced by $f^{-1}(i)$ can be identified with a word $W(i)$ that consists of $r_i$ down-steps followed by $\alpha_i-r_i-1$ up-steps.
This follows from the fact that $f^{-1}(i)$ has a unique minimal element.
Moreover, reading the values of $f$ from left to right without repetitions we obtain a permutation $\pi$.

For instance, the order-preserving surjection $f_{\sigma}$ from \cref{ex:path8} yields $r_1=0$, $r_2=0$, $r_3=0$, $r_4=1$ and $\pi=1423$.

We claim that the map $\psi$ defined by $f\mapsto(\pi,r_1,\dotsc,r_\ell)$ is a bijection
\[
\psi:\bigsqcup_{S\subseteq[n-1]}\opsurj_{\alpha}^{\ast}(P_{S})
\to\symS_{\ell}\times\prod_{i=1}^{\ell}\{0,\dotsc,\alpha_i-1\}
\,.
\]
To see this we construct the inverse of $\psi$.
Given $(\pi,r_1,\dotsc,r_{\ell})$ first form words $W(i)$ consisting of $r_i$ down-steps followed by $\alpha_i-r_i-1$ up-steps.
We recover $S$ from the identity
\[
W_S=
W(\pi_1)\step{1}W(\pi_2)\dotsm W(\pi_{\ell-1})\step{\ell-1}W(\pi_{\ell})
\,,
\]
where $\step{i}=\down$ if $i\in\DES(\pi)$ and $\step{i}=\up$ otherwise.
Once $\alpha,S$ and $\pi$ are known, it is easy to recover $f$.

From
$\abs{S}=\des(\pi)+\sum_{i=1}^{\ell}r_i$
it follows that
\begin{align*}
\sum_{S\subseteq[n-1]}q^{\abs{S}}\abs{\opsurj_{\alpha}^{\ast}(P_{S})}
=
A_{\ell}(q)\prod_{i=1}^{\ell}[\alpha_i]_q
\,,
\end{align*}
where
$A_{k}(q)\coloneqq\sum_{\sigma\in\symS_{k}}q^{\des(\sigma)}$
is the \emph{Eulerian polynomial} and
$[a]_q\coloneqq\frac{1-q^a}{1-q}$
is the commonly used \emph{$q$-integer}.
\end{example}

Our final example is related to chromatic quasisymmetric functions of cycles and cycle Eulerian quasisymmetric functions, see \cref{sec:eulerian}.
The same computation was previously done by B.~Ellzey with different notation.

\begin{example}*[The cycle, {\cite[Thm.~4.4]{Ellzey2016}}]
\label[example]{ex:cycle}
Let $\alpha$ be a composition of $n$ with $\ell$ parts.
We want to compute
\[
\sum_{\substack{S\subseteq[n]\\ 0<\abs{S}<n}}q^{\abs{S}}\abs{\opsurj_{\alpha}^{\ast}(P_{S})}
\,,
\]
where $P_S$ denotes the poset on $\{\poseti{1},\dots,\poseti{n}\}$ with cover relations $\poseti{i}<\poseti{i+1}$ for $i\in[n]\setminus S$ and $\poseti{i}>\poseti{i+1}$ for $i\in S$.
Here indices are to be understood modulo $n$, thus the Hasse diagram of $P_S$ viewed as an (undirected) graph is a cycle.
Arranging the vertices $\poseti{1},\dotsc,\poseti{n}$ from left to right, we can think of the Hasse diagram of $P_S$ as a word $W_S\in\{\up,\down\}^{n}$, where the last letter determines the relation between $\poseti{1}$ and $\poseti{n}$.
Note that we have to exclude the cases $S=\emptyset$ and $S=[n]$ since they do not give rise to partial orders.

For example, the following poset corresponds to the set $S=\{2,3,4,6,7\}$ and the word $\up\down\down\down\up\down\down$.
\begin{align*}
\begin{tikzpicture}[xscale=0.5,yscale=0.5,baseline=12pt,
circ/.style={circle,draw,inner sep=0.8pt, minimum width=12pt,font=\footnotesize}
]
\draw(0,0)node{$P$};
\node[circ] (1) at (0,2) {$v_1$};
\node[circ] (2) at (1,3) {$v_2$};
\node[circ] (3) at (2,2) {$v_3$};
\node[circ] (4) at (3,1) {$v_4$};
\node[circ] (5) at (4,0) {$v_5$};
\node[circ] (6) at (5,1) {$v_6$};
\node[circ] (7) at (6,0) {$v_7$};
\draw[black,thick] (1)--(2)--(3)--(4)--(5)--(6)--(7);
\draw[black,thick,->](-.8,2.8)--(1);
\draw[black,thick,->](7)--(6.8,-.8);
\end{tikzpicture}
&&
\begin{tikzpicture}[xscale=0.5,yscale=0.5,baseline=12pt,
circ/.style={circle,draw,inner sep=0.8pt, minimum width=12pt,font=\footnotesize}
]
\draw(0,0)node{$(P,w)$};
\node[circ] (1) at (0,2) {$1$};
\node[circ] (2) at (1,3) {$7$};
\node[circ] (3) at (2,2) {$5$};
\node[circ] (4) at (3,1) {$3$};
\node[circ] (5) at (4,0) {$2$};
\node[circ] (6) at (5,1) {$6$};
\node[circ] (7) at (6,0) {$4$};
\draw[black,thick] (1)--(2)--(3)--(4)--(5)--(6)--(7);
\draw[black,thick,->](-.8,2.8)--(1);
\draw[black,thick,->](7)--(6.8,-.8);
\end{tikzpicture}
\end{align*}

Let $S\subseteq[n]$ with $0<\abs{S}<n$.
We have to distinguish two cases.

\noindent
\textbf{Case 1.} First assume that $\ell\geq2$.

Fix some $f\in\opsurj_{\alpha}^{\ast}(P_{S})$.
For each $i\in[\ell]$ the subgraph of the Hasse diagram of $P_S$ induced by $f^{-1}(i)$ is a path and can be identified with a word $W(i)$ that consists of $r_i$ down-steps followed by $\alpha_i-r_i-1$ up-steps.
Again this follows from the fact that $f^{-1}(i)$ has a unique minimal element.
Reading the values of $f$ from left to right without repetitions we obtain a long cycle $(\pi_1,\dotsc,\pi_{\ell-1},\ell)\in\symS_{\ell}$ which we identify with the permutation $\pi_1\dotsm\pi_{\ell-1}\in\symS_{\ell-1}$.
Moreover, let $k$ be the position of the minimal element of $f^{-1}(1)$.

For instance, consider the composition $\alpha=331$ and the labeled poset $(P,w)$ above, and let $\sigma=235|147|6\in\ILinExt_{\alpha}^{\ast}(P,w)$.
Then $\compp{2}{\alpha}(\sigma)=\{v_1,v_2,v_7\}$, $\compp{1}{\alpha}(\sigma)=\{v_3,v_4,v_5\}$ and $\compp{3}{\alpha}(\sigma)=\{v_6\}$.
Hence $f_{\sigma}$ is as shown below.
\begin{align*}
\begin{tikzpicture}[xscale=0.5,yscale=0.5,
circ/.style={circle,draw,inner sep=0.8pt, minimum width=12pt,font=\footnotesize}
]
\draw(0,0)node{$(P,\sigma^{-1}\circ w)$};
\node[circ] (1) at (0,2) {$4$};
\node[circ] (2) at (1,3) {$6$};
\node[circ] (3) at (2,2) {$3$};
\node[circ] (4) at (3,1) {$2$};
\node[circ] (5) at (4,0) {$1$};
\node[circ] (6) at (5,1) {$7$};
\node[circ] (7) at (6,0) {$5$};
\draw[black,very thick,->](-.8,2.8)--(1);
\draw[black,very thick,->](7)--(6.8,-.8);
\draw[black,very thick] (1)--(2);
\draw[dotted,thick] (2)--(3);
\draw[black,very thick] (3)--(4)--(5);
\draw[dotted,thick] (5)--(6)--(7);
\end{tikzpicture}
&&
\begin{tikzpicture}[xscale=0.5,yscale=0.5,
circ/.style={circle,draw,inner sep=0.8pt, minimum width=12pt,font=\footnotesize}
]
\draw(0,0)node{$(P,f_{\sigma})$};
\node[circ] (1) at (0,2) {$2$};
\node[circ] (2) at (1,3) {$2$};
\node[circ] (3) at (2,2) {$1$};
\node[circ] (4) at (3,1) {$1$};
\node[circ] (5) at (4,0) {$1$};
\node[circ] (6) at (5,1) {$3$};
\node[circ] (7) at (6,0) {$2$};
\draw[black,very thick,->](-.8,2.8)--(1);
\draw[black,very thick,->](7)--(6.8,-.8);
\draw[black,very thick] (1)--(2);
\draw[dotted,thick] (2)--(3);
\draw[black,very thick] (3)--(4)--(5);
\draw[dotted,thick] (5)--(6)--(7);
\end{tikzpicture}
\end{align*}
This yields $r_1=2$, $r_2=1$, $r_3=0$, $\pi=21$, and $k=5$.

We claim that the map $\psi$ defined by $f\mapsto(k,\pi,r_1,\dotsc,r_\ell)$ is a bijection
\[
\psi:\bigsqcup_{\substack{S\subseteq[n]\\ 0<\abs{S}<n}}\opsurj_{\alpha}^{\ast}(P_{S})
\to[n]\times\symS_{\ell-1}\times\prod_{i=1}^{\ell}\{0,\dotsc,\alpha_i-1\}
\,.
\]
To see this we construct the inverse of $\psi$ in a similar fashion as in \cref{ex:path}.
Given $(k,\pi,r_1,\dotsc,r_{\ell})$ first form words $W(i)$ consisting of $r_i$ down-steps followed by $\alpha_i-r_i-1$ up-steps.
Then $W_S$ is a cyclic shift of the word
\[
W(\pi_1)\step{1}W(\pi_2)\dotsm\step{\ell-2}W(\pi_{\ell-1})\up W(\ell)\down
\,,
\]
where $\step{i}=\down$ if $i\in\DES(\pi)$ and $\step{i}=\up$ otherwise.
The number $k$ contains precisely the information needed to recover $S$.
As before, $f$ can be obtained once $\alpha,S$ and $\pi$ are known.

For example, suppose we are given the data $(k,\pi,r_1,r_2,r_3)=(5,21,2,1,0)$.
We first form the posets corresponding to the words $W(i)$.
\begin{align*}
\begin{tikzpicture}[xscale=0.5,yscale=0.5,baseline=-5mm,
circ/.style={circle,draw,inner sep=0.8pt, minimum width=12pt,font=\footnotesize}
]
\draw(-1,0)node{$W(1)=\down\down$};
\node[circ] (A) at (0,2) {$1$};
\node[circ] (B) at (1,1) {$1$};
\node[circ] (C) at (2,0) {$1$};
\draw[black,thick] (A)--(B)--(C);
\end{tikzpicture}
&&
\begin{tikzpicture}[xscale=0.5,yscale=0.5,baseline=-5mm,
circ/.style={circle,draw,inner sep=0.8pt, minimum width=12pt,font=\footnotesize}
]
\draw(1,2)node{$W(2)=\down\up$};
\node[circ] (A) at (0,1) {$2$};
\node[circ] (B) at (1,0) {$2$};
\node[circ] (C) at (2,1) {$2$};
\draw[black,thick] (A)--(B)--(C);
\end{tikzpicture}
&&
\begin{tikzpicture}[xscale=0.5,yscale=0.5,baseline=-5mm,
circ/.style={circle,draw,inner sep=0.8pt, minimum width=12pt,font=\footnotesize}
]
\draw(0,2)node{$W(3)=\emptyset$};
\node[circ] (A) at (0,0) {$3$};
\end{tikzpicture}
\end{align*}
Next join the words $W(i)$ according to $\pi$ (with $W(3)$ in last position).
\begin{align*}
\begin{tikzpicture}[xscale=0.5,yscale=0.5,
circ/.style={circle,draw,inner sep=0.8pt, minimum width=12pt,font=\footnotesize}
]
\draw(0,0)node{$W(2)\down W(1)\up W(3)\down$};
\node[circ] (1) at (0,3) {$2$};
\node[circ] (2) at (1,2) {$2$};
\node[circ] (3) at (2,3) {$2$};
\node[circ] (4) at (3,2) {$1$};
\node[circ] (5) at (4,1) {$1$};
\node[circ] (6) at (5,0) {$1$};
\node[circ] (7) at (6,1) {$3$};
\draw[dotted,thick,->](-.8,3.8)--(1);
\draw[dotted,thick,->](7)--(6.8,.2);
\draw[black,very thick] (1)--(2)--(3);
\draw[dotted,thick] (3)--(4);
\draw[black,very thick] (4)--(5)--(6);
\draw[dotted,thick] (6)--(7);
\end{tikzpicture}
&&
\begin{tikzpicture}[xscale=0.5,yscale=0.5,
circ/.style={circle,draw,inner sep=0.8pt, minimum width=12pt,font=\footnotesize}
]
\draw(0,0)node{$W_S=\up\down\down\down\up\down\down$};
\node[circ] (1) at (0,2) {$2$};
\node[circ] (2) at (1,3) {$2$};
\node[circ] (3) at (2,2) {$1$};
\node[circ] (4) at (3,1) {$1$};
\node[circ] (5) at (4,0) {$1$};
\node[circ] (6) at (5,1) {$3$};
\node[circ] (7) at (6,0) {$2$};
\draw[black,very thick,->](-.8,2.8)--(1);
\draw[black,very thick,->](7)--(6.8,-.8);
\draw[black,very thick] (1)--(2);
\draw[dotted,thick] (2)--(3);
\draw[black,very thick] (3)--(4)--(5);
\draw[dotted,thick] (5)--(6)--(7);
\end{tikzpicture}
\end{align*}
Since the minimal element of $f^{-1}(1)$ is currently in sixth position, we have to cyclically shift until the position is $k=5$ to obtain $W_S$.

From
$\abs{S}=1+\des(\pi)+\sum_{i=1}^{\ell}r_i$
it follows that
\begin{align*}
\sum_{S\subseteq[n-1]}q^{\abs{S}}\abs{\opsurj_{\alpha}^{\ast}(P_{S})}
=
nqA_{\ell-1}(q)\prod_{i=1}^{\ell}[\alpha_i]_q
\,,
\end{align*}
where $A_{k}(q)$ and $[a]_q$ are defined as in \cref{ex:path}.

\noindent
\textbf{Case 2.} Now suppose $\alpha=(n)$.

Note that $\opsurj_{(n)}^{\ast}(P_S)$ contains a unique element if $P_S$ has a unique minimal element, and $\opsurj_{(n)}^{\ast}(P_S)$ is empty otherwise.
It is not difficult to see that
\[
\sum_{\substack{S\subseteq[n]\\ 0<\abs{S}<n}}q^{\abs{S}}|\opsurj_{(n)}^{\ast}(P_S)|
=nq[n-1]_q
\,.\qeddefhere
\]
\end{example}

\section{Reverse \texorpdfstring{$P$}{P}-partitions with forced equalities}
\label[section]{sec:revPPeq}

In this section we consider the generating function of reverse $P$-partitions that assign equal colors to certain given elements.
In \cref{thm:KPE} below we show that these functions are $\Psi$-positive, thus extending \cref{thm:KP_positive}.
We hope this will enable (or simplify) future applications of our results to families of quasisymmetric functions that are not covered by \cref{sec:revPP}.
A simple example of such a family are the power sum symmetric functions $\psumP_{\lambda}$, which we discuss in \cref{sec:p}.
Reverse $P$-partitions with forced equalities have not received much attention to date.
The results of this section might be an indication that they are worth investigating further.

A \emph{partitioned poset} $(P,w,E)$ is a naturally labeled poset $(P,w)$ endowed with an equivalence relation $E$ on $P$.
Given a partitioned poset $(P,w,E)$ with $n$ elements, let
\begin{equation*}
\rpp(P,E)
\coloneqq
\big\{
f\in\rpp(P):f(\poseta)=f(\posetb)\text{ for all }\poseta,\posetb\in P\text{ with }\poseta\overset{E}{\sim}\posetb
\big\}
\end{equation*}
and define the generating function
\[
K_{P,E}(\xvec)
\coloneqq
\sum_{\substack{f\in\rpp(P)\\
\poseta\overset{E}{\sim}\posetb\Rightarrow f(\poseta)=f(\posetb)}}
\prod_{\poseta\in P}\xvec_{f(\poseta)}
\,.
\]
This is a homogeneous quasisymmetric function of degree $n$.
For example, if all equivalence classes of $E$ are singletons then $K_{P,E}=K_{P}$.
On the other hand if $\poseta\overset{E}{\sim}\posetb$ for all $\poseta,\posetb\in P$, then $K_{P,E}$ equals the power sum symmetric function $\psumP_{n}$.

In the above definition the equivalence relation $E$ is completely arbitrary.
We now define a very special kind of equivalence relation.

Let $P$ be a poset.
An equivalence relation $E$ on $P$ is a \emph{chain congruence} on $P$ if the following two conditions are satisfied:
\begin{enumerate}[(i)]
\item The equivalence class $[\poseta]_E$ is a chain in $P$ for all $\poseta\in P$.
\item For all $\poseta,\posetb\in P$ with $\poseta<_P\posetb$ and $\poseta\overset{E}\nsim\posetb$ we have $\max{[\poseta]_E}<_P\min{[\posetb]_E}$.
\end{enumerate}

In order to prove $\Psi$-positivity of the functions $K_{P,E}$ it suffices to consider chain congruences.
This is the content of the next lemma.

\begin{lemma}\label[lemma]{lem:chain_congruence}
Let $(P,w,E)$ be a partitioned poset.
Then there exists a partitioned poset $(P',w',E')$ such that
\[
K_{P,E}(\xvec)=K_{P',E'}(\xvec)
\,,
\]
the poset $P'$ is obtained from $P$ by adding order relations, $E'$ is obtained from $E$ by joining equivalence classes, and $E'$ is a chain congruence on $P'$.
\end{lemma}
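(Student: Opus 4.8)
The plan is to carry out the reduction in two independent moves: first enlarge $E$ so that it already records every equality that is automatically forced by order-preservation, and then add exactly the order relations needed to turn the enlarged relation into a chain congruence, all without altering the generating function.

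First I would introduce the preorder $\preceq$ on $P$ generated by $\leq_P$ together with $E$, where each $E$-equivalence contributes relations in both directions; concretely $\poseta\preceq\posetb$ iff there is a sequence $\poseta=z_0,z_1,\dots,z_k=\posetb$ in which every consecutive pair satisfies $z_i\leq_P z_{i+1}$ or $z_i\overset{E}{\sim}z_{i+1}$. Let $E_1$ be the equivalence relation with $\poseta\overset{E_1}{\sim}\posetb$ iff $\poseta\preceq\posetb$ and $\posetb\preceq\poseta$; then $E\subseteq E_1$, and $\preceq$ descends to a genuine partial order on the set of $E_1$-classes. Every $f\in\rpp(P,E)$ is weakly increasing along $\leq_P$-steps and constant along $E$-steps, hence weakly increasing along $\preceq$, so it is constant on $E_1$-classes; combined with $E\subseteq E_1$ this gives $\rpp(P,E)=\rpp(P,E_1)$ and therefore $K_{P,E}=K_{P,E_1}$.

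Next I would define a poset $P'$ on the same ground set as $P$ by declaring, for $\poseta\neq\posetb$, that $\poseta<_{P'}\posetb$ precisely when either (a) $\poseta\preceq\posetb$ but $\posetb\not\preceq\poseta$, or (b) $\poseta\overset{E_1}{\sim}\posetb$ and $w(\poseta)<w(\posetb)$; case (b) simply installs on each $E_1$-class the linear order induced by the injection $w$. One checks that $\leq_{P'}$ is a partial order (transitivity is a short case analysis; antisymmetry follows from antisymmetry of $\preceq$ modulo $E_1$ together with injectivity of $w$) and that $\leq_P\subseteq\leq_{P'}$: if $\poseta\leq_P\posetb$ then $\poseta\preceq\posetb$, and either $\posetb\not\preceq\poseta$, giving (a), or $\poseta\overset{E_1}{\sim}\posetb$, in which case $w(\poseta)<w(\posetb)$ by naturality of $w$ on $P$, giving (b). I would then take $w'$ to be any natural labeling of the finite poset $P'$ and set $E'=E_1$; since $(P',w')$ is naturally labeled, $\rpp(P')$ consists of order-preserving maps, so $K_{P',E'}$ does not depend on the choice of $w'$.

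It remains to verify the two conclusions. For $K_{P',E'}=K_{P,E}$: order-preservation for $P'$ implies order-preservation for $P$ since $\leq_P\subseteq\leq_{P'}$, so $\rpp(P',E_1)\subseteq\rpp(P,E_1)$; conversely any $f\in\rpp(P,E_1)=\rpp(P,E)$ is $\preceq$-monotone and constant on $E_1$-classes, hence respects both the (a)- and (b)-relations, so $f\in\rpp(P',E_1)$, and thus $K_{P',E'}=K_{P,E_1}=K_{P,E}$. For the chain congruence property: inside an $E_1$-class only rule (b) can fire, so the class carries the linear order given by $w$ and is a chain in $P'$; and if $\poseta<_{P'}\posetb$ with $\poseta\overset{E_1}{\nsim}\posetb$ then case (a) holds, i.e. $\poseta\preceq\posetb$ and $\posetb\not\preceq\poseta$, a condition depending only on the $E_1$-classes of $\poseta$ and $\posetb$, so it persists with $\poseta$ replaced by any member of $[\poseta]_{E_1}$ and $\posetb$ by any member of $[\posetb]_{E_1}$; in particular $\max[\poseta]_{E_1}<_{P'}\min[\posetb]_{E_1}$. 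Hence $(P',w',E_1)$ is the desired partitioned poset. I expect the main obstacle to be the bookkeeping showing that $\leq_{P'}$ is a genuine partial order — in particular that the comparabilities newly introduced between distinct $E_1$-classes create no cycles — which is precisely where passing to $E_1$-classes, so that $\preceq$ becomes antisymmetric, is essential; once that is secured the chain congruence conditions fall out almost by design, since rule (a) relates distinct classes in an all-or-nothing fashion.
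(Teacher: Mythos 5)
Your proposal is correct and follows essentially the same route as the paper: you take the equivalence relation generated by mutual reachability in the preorder spanned by $\leq_P$ and $E$ (the paper's $E'$), order each class linearly by the original labeling $w$, compare distinct classes via the strict preorder, and then verify the equality of generating functions and the chain congruence conditions exactly as the paper does. The extra details you supply (that $\leq_{P'}$ is a partial order, that $\leq_P\subseteq\leq_{P'}$, and that the congruence condition is checked against $<_{P'}$) are consistent with, and if anything slightly more careful than, the paper's argument.
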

\begin{proof}
Define a relation $E'$ on $P$ as follows.
For $\poseta,\posetb\in P$ let $\poseta\overset{E'}{\sim}\posetb$ if and only if the following two (symmetric) conditions are satisfied:
\begin{enumerate}[(i)]
\item
There exist $k\in\setN$ and $\poseti{1},\posetj{1},\dotsc,\poseti{k},\posetj{k}\in P$ such that $\poseta=\poseti{1}$, $\posetb=\posetj{k}$, $\poseti{i}\leq_P\posetj{i}$ for all $i\in[k]$, and $\posetj{i}\overset{E}{\sim}\poseti{i+1}$ for all $i\in[k-1]$. 
\item
There exist $k\in\setN$ and $\poseti{1},\posetj{1},\dotsc,\poseti{k},\posetj{k}\in P$ such that $\posetb=\poseti{1}$, $\poseta=\posetj{k}$, $\poseti{i}\leq_P\posetj{i}$ for all $i\in[k]$, and $\posetj{i}\overset{E}{\sim}\poseti{i+1}$ for all $i\in[k-1]$. 
\end{enumerate}
See \cref{fig:chaincongruence} for an example.

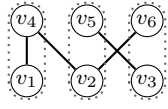
\begin{figure}
\begin{tikzpicture}[xscale=0.8,yscale=0.8,
circ/.style={circle,draw,inner sep=0.8pt, minimum width=12pt,font=\footnotesize},equiv/.style={black!60,thick,dotted,rounded corners=6pt}
]
\begin{scope}
\node[circ] (1) at (0,0) {$v_1$};
\node[circ] (4) at (0,1) {$v_4$};
\node[circ] (2) at (1,0) {$v_2$};
\node[circ] (5) at (1,1) {$v_5$};
\node[circ] (3) at (2,0) {$v_3$};
\node[circ] (6) at (2,1) {$v_6$};
\draw[black, thick](1)--(4)(2)--(4)(2)--(6)(3)--(5);
\draw[equiv](-.3,-.3)rectangle(.3,1.3);
\draw[equiv,xshift=1cm](-.3,-.3)rectangle(.3,1.3);
\draw[equiv,xshift=2cm](-.3,-.3)rectangle(.3,1.3);
\end{scope}
\end{tikzpicture}
\caption{A poset $P$ with an equivalence relation $E$ given by $v_1\overset{E}{\sim}v_4$ and $v_2\overset{E}{\sim}v_5$ and $v_3\overset{E}{\sim}v_6$.
The equivalence relation $E'$ has two classes $\{v_1,v_4\}$ and $\{v_2,v_3,v_5,v_6\}$.
Suppose $w:P\to[6]$ sends $v_i$ to $i$.
Then $v_2\leq_{P'}v_3\leq_{P'}v_5\leq_{P'}v_6\leq_{P'}v_1\leq_{P'}v_4$.
}
\label{fig:chaincongruence}
\end{figure}

Clearly $E'$ is an equivalence relation and $\poseta\overset{E}{\sim}\posetb$ implies $\poseta\overset{E'}{\sim}\posetb$.
We claim that $K_{P,E}=K_{P,E'}$.
To see this note that $\rpp(P,E')\subseteq\rpp(P,E)$.
Thus let $f\in\rpp(P,E)$ and $\poseta,\posetb\in P$ with $\poseta\overset{E'}{\sim}\posetb$.
The definition of $E'$ yields $f(\poseta)\leq f(\posetb)\leq f(\poseta)$ and therefore $f\in\rpp(P,E')$.

Next define a relation $\leq_{P'}$ on $P$ as follows.
For all $\poseta,\posetb\in P$ let $\poseta\leq_{P'}\posetb$ if and only if one of the following two mutually exclusive conditions is satisfied:

\begin{enumerate}[(i)]
\item
We have $\poseta\overset{E'}{\sim}\posetb$ and $w(\poseta)\leq w(\posetb)$.
\item
We have $\poseta\overset{E'}{\nsim}\posetb$ and there exist $k\in\setN$ and $\poseti{1},\posetj{1},\dotsc,\poseti{k},\posetj{k}\in P$ such that $\poseta=\poseti{1}$, $\posetb=\posetj{k}$, $\poseti{i}\leq_P\posetj{i}$ for all $i\in[k]$, and $\posetj{i}\overset{E'}{\sim}\poseti{i+1}$ for all $i\in[k-1]$. 
\end{enumerate}

Then $P'$ is a poset with the same ground set as $P$, and $\poseta\leq_P\posetb$ implies $\poseta\leq_{P'}\posetb$ for all $\poseta,\posetb\in P$.
Let $w'$ be an arbitrary natural labeling of $P'$.

We claim that $K_{P',E'}=K_{P,E'}$.
To see this note that $\rpp(P',E')\subseteq\rpp(P,E')$.
Thus let $f\in\rpp(P,E')$ and $\poseta,\posetb\in P$ with $\poseta\leq_{P'}\posetb$.
The definition of $P'$ implies $f(\poseta)\leq f(\posetb)$ and therefore $f\in\rpp(P',E')$.

It remains to show that $E'$ is a chain congruence on $P'$.
Clearly $[\poseta]_{E'}$ is a chain by definition of $P'$ for all $\poseta\in P'$.
Secondly suppose $\poseta<_{P}\posetb$ and $\poseta\overset{E'}{\nsim}\posetb$ for some $\poseta,\posetb\in P'$.
Then
\begin{equation*}
\max{[\poseta]_{E'}}
\leq_{P}\max{[\poseta]_{E'}}
\overset{E'}{\sim}\poseta
\leq_{P}\posetb
\overset{E'}{\sim}\min{[\posetb]_{E'}}
\leq_{P}\min{[\posetb]_{E'}}
.
\end{equation*}
Hence $\max{[\poseta]_{E'}}\leq_{P'}\min{[\posetb]_{E'}}$and the proof is complete.
\end{proof}

Let $(P,w,E)$ be a partitioned poset.
If we are only interested in $K_{P,E}$, then by \cref{lem:chain_congruence} we may assume that $E$ is a chain congruence on $P$.
If this is the case we can form the \emph{quotient poset} $P/E$, that is, the partial order on the set of equivalence classes $\{[\poseta]_E:\poseta\in P\}$ defined by $[\poseta]_E\leq[\posetb]_E$ if and only if $\poseta\leq_P\posetb$.
Then
\[
K_{P,E}(\xvec)
=\sum_{f\in\rpp(P/E)}\prod_{\poseta\in P}\xvar_{f([\poseta]_E)}
=
\sum_{f\in\rpp(P/E)}\prod_{C\in P/E}\xvar_{f(C)}^{|C|}
\,.
\]
This leads to a second, equivalent, way of thinking about partitioned posets.

A \emph{weighted poset} $(P,w,d)$ consists of a naturally labeled poset $(P,w)$ and a vector $d=(d_{\poseta})_{\poseta\in P}$ of positive integers --- a weight on $P$.
Define the weighted generating function
\begin{equation*}
K^d_{P}(\xvec) \coloneqq \sum_{f\in\rpp(P)} \prod_{\poseta\in P} \xvar^{d_{\poseta}}_{f(\poseta)}
\,.
\end{equation*}
This is a homogeneous quasisymmetric function of degree $|d|$.
For example, if $P$ is a poset with one element then $K^d_{P}(\xvec)$ is 
equal to the power sum symmetric function $\psumP_{\abs{d}}(\xvec)$.

The following theorem is the main result of this section and generalizes \cref{thm:KP_positive}.

\begin{theorem}\label[theorem]{thm:KPE}
Let $(P,w)$ be a naturally labeled poset with $n$ elements, and let $E$ be a chain congruence on $P$.
Then
\begin{equation}\label{eq:KPE}
K_{P,E}(\xvec)
=\sum_{\alpha\vDash n}
\frac{\Psi_{\alpha}(\xvec)}{z_{\alpha}}
\sum_{\sigma\in\ILinExt_{\alpha}^{\ast}(P,w,E)}
\prod_{i=1}^{\ell(\alpha)}\big|[\min\compp{i}{\alpha}(\sigma)]_E\big|
\,,
\end{equation} 
where $\ILinExt_{\alpha}^{\ast}(P,w,E)$ denotes the set of permutations $\sigma\in\ILinExt_{\alpha}^{\ast}(P,w)$ such that for each $\poseta\in P$ there exists $i\in[\ell(\alpha)]$ with $[\poseta]_E\subseteq\compp{i}{\alpha}(\sigma)$.
Note that $\min\compp{i}{\alpha}(\sigma)$ is well-defined for all $\sigma\in\ILinExt_{\alpha}^{\ast}(P,w)$.

Equivalently,
\begin{equation}\label{eq:KPE_opsurj}
K_{P,E}(\xvec)
=\sum_{\alpha\vDash n}
\frac{\Psi_{\alpha}(\xvec)}{z_{\alpha}}
\sum_{f\in\opsurj_{\alpha}^{\ast}(P,E)}
\prod_{i=1}^{\ell(\alpha)}\big|[\min f^{-1}(i)]_E\big|
\,,
\end{equation} 
where $\opsurj_{\alpha}^{\ast}(P,E)$ denotes the set of order-preserving surjections $f\in\opsurj_{\alpha}^{\ast}(P)$ that satisfy $f(\poseta)=f(\posetb)$ for all $\poseta,\posetb\in P$ with $\poseta\overset{E}{\sim}\posetb$.
Note that $\min f^{-1}(i)$ is well-defined for all $f\in\opsurj_{\alpha}^{\ast}(P)$.

In particular, $K_{P,E}$ is $\Psi$-positive.
\end{theorem}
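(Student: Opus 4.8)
The plan is to adapt the proof of \cref{thm:KP_positive}. By \cref{lem:chain_congruence} we may assume, as the statement already does, that $E$ is a chain congruence. The first task is a fundamental-type expansion of $K_{P,E}$. Applying Stanley's theory of $P$-partitions to $(P,w)$ gives the disjoint decomposition $\rpp(P)=\bigsqcup_{\sigma\in\ILinExt(P,w)}\rpp(\sigma)$, and intersecting each block with the condition that $f$ be constant on $E$-classes yields
\[
K_{P,E}(\xvec)=\sum_{\sigma\in\ILinExt(P,w)}\ \sum_{\substack{f\in\rpp(\sigma)\\ f\text{ constant on }E\text{-classes}}}\ \prod_{\poseta\in P}\xvar_{f(\poseta)}.
\]
For a fixed $\sigma$ the constancy condition propagates, along the chain determined by $\sigma$, to the convex hull of each $E$-class, so the inner sum vanishes if a descent of $\sigma$ falls inside such a convex hull, and otherwise equals a \emph{weighted} fundamental quasisymmetric function --- the generating function of an order-preserving chain map with prescribed multiplicities and prescribed strict steps. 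Such a weighted fundamental is itself a signed integer combination of ordinary fundamentals $\gessel_{n,T}$ (by inclusion--exclusion over which ``interior'' steps are forced strict), so $K_{P,E}=\sum_{T\subseteq[n-1]}c_T\,\gessel_{n,T}$ with explicit, but signed, integer coefficients $c_T$. This plays the role that \cref{lem:fExpOfKP} plays in the proof of \cref{thm:KP_positive}.

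Feeding this into \cref{cor:qsym_Athanasiadis} gives
\[
K_{P,E}(\xvec)=\sum_{\alpha\vDash n}\frac{\Psi_\alpha(\xvec)}{z_\alpha}\sum_{T\in U_\alpha}(-1)^{|T\setminus\compset{\alpha}|}c_T,
\]
so \eqref{eq:KPE} is reduced to the assertion that, for each $\alpha\vDash n$, the explicit alternating sum $\sum_{T\in U_\alpha}(-1)^{|T\setminus\compset{\alpha}|}c_T$ equals $\sum_{\sigma\in\ILinExt_\alpha^{\ast}(P,w,E)}\prod_{i=1}^{\ell(\alpha)}\bigl|[\min\compp{i}{\alpha}(\sigma)]_E\bigr|$, and in particular is nonnegative. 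One subtlety to keep in mind from the outset is that $\ILinExt_\alpha^{\ast}(P,w,E)$ is \emph{not} in general contained in the natural index set of this alternating sum: an $E$-class that lies inside a single $\Psi$-block of $\sigma$ need not occupy a contiguous set of positions, because the labeling may interleave incomparable block elements between its members.

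The remaining --- and principal --- step is to evaluate the alternating sum by a sign-reversing involution refining the one in the proof of \cref{thm:ILinExt}. Scanning the $\Psi$-blocks $\compp{1}{\alpha}(\sigma),\dotsc,\compp{\ell(\alpha)}{\alpha}(\sigma)$ from left to right, one locates the first ``defect'': a block with more than one minimal element (handled, as in \cref{thm:ILinExt}, by a cyclic rotation of a subinterval), an $E$-class straddling a $\Psi$-block boundary, or an interior step left strict by the weighted-fundamental bookkeeping. Each repair reverses the parity of the relevant exponent, so the alternating sum collapses onto the defect-free configurations, which satisfy $\DES(\sigma)\subseteq\compset{\alpha}$. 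Recording, within each $\Psi$-block, which cyclic rotation of the bottom $E$-class was used, the defect-free configurations fibre over $\ILinExt_\alpha^{\ast}(P,w,E)$ with fibre of size exactly $\prod_{i}\bigl|[\min\compp{i}{\alpha}(\sigma)]_E\bigr|$ above $\sigma$; this produces \eqref{eq:KPE}. The equivalent formulation \eqref{eq:KPE_opsurj} then follows by transporting the identity through the bijection of \cref{prop:opsurj}, extended to the partitioned setting, and $\Psi$-positivity is immediate since every summand of \eqref{eq:KPE} is nonnegative.

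I expect the involution to be the hard part, for three reasons. There are now two independent sources of signs --- the ones coming from \cref{thm:gesselInPSum} as in \cref{thm:KP_positive}, and the additional ones produced by resolving weighted fundamentals into ordinary ones --- and the map must cancel both at once. One must fix a canonical order in which the possible defects are resolved so that $\varphi$ is literally an involution, and verify that each rotation preserves both the Jordan--H{\"o}lder constraint and $\alpha$-unimodality. Most importantly, the surviving objects are not simply the fixed points of an involution but a weighted cover of $\ILinExt_\alpha^{\ast}(P,w,E)$, so the multiplicities $\prod_{i}\bigl|[\min\compp{i}{\alpha}(\sigma)]_E\bigr|$ have to be tracked throughout; this is what makes \cref{sec:revPPeq} the most technical part of the paper. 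A cancellation-free derivation --- for instance along the lines of the bijections sought in \cref{pr:findbij} --- would be preferable but seems out of reach.
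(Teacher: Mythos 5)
Your strategy---expand $K_{P,E}$ into fundamentals and then cancel signs by an involution refining the one from \cref{thm:ILinExt}---is not the paper's route, and as written it has a genuine gap: the decisive step is announced but never carried out. After contracting the forced-equality intervals you get, for each $\sigma\in\ILinExt(P,w)$, a monomial-type generating function with multiplicities; re-expanding these into ordinary fundamentals $\gessel_{n,T}$ introduces a second layer of signs (your coefficients $c_T$ are themselves alternating sums), and after \cref{cor:qsym_Athanasiadis} you face a double alternating sum. The proposed involution is described only as ``locate the first defect and repair it'': you never specify the repair when an $E$-class straddles a block boundary or when an interior step has been forced strict, nor do you check that each repair preserves membership in $\ILinExt(P,w)$ and $\alpha$-unimodality, reverses exactly one sign, and composes to an involution. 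Most seriously, the claim that the surviving configurations fibre over $\ILinExt_{\alpha}^{\ast}(P,w,E)$ with fibre size exactly $\prod_{i}\bigl|[\min\compp{i}{\alpha}(\sigma)]_E\bigr|$ is pure assertion: ``recording which cyclic rotation of the bottom $E$-class was used'' does not explain why the multiplicity is the size of the $E$-class of the block minimum, and you yourself point out the obstruction that an $E$-class inside a block need not occupy contiguous positions, without resolving it. As it stands this is a program, not a proof.

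For comparison, the paper sidesteps any new involution. It proves \eqref{eq:KPE_opsurj} by induction on $n-c(E)$, the number of elements minus the number of $E$-classes: the base case is exactly \cref{thm:KP_positive}, and for the step one picks a class $C$ with $\abs{C}\geq 2$, forms $E'$ by splitting off $\min C$, $E''$ by splitting off $\max C$, and $P'$ by deleting the relations between $\max C$ and the rest of $C$, and uses the inclusion--exclusion recursion $K_{P,E}=K_{P,E'}+K_{P,E''}-K_{P',E'}$. The identity \eqref{eq:KPE_opsurj} is then verified surjection by surjection: the weight $\bigl|[\min f^{-1}(i)]_E\bigr|$ emerges as $(\abs{C}-1)+1-0$, and the unwanted terms on the right cancel via a simple swap $f\mapsto f'$ of the values at $\min C$ and $\max C$. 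All the bookkeeping difficulties you list (tracking multiplicities through a signed cancellation) are precisely what this recursion avoids by reusing the unweighted theorem as a black box; if you want to complete your route, those verifications are the work that remains.
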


\cref{thm:KPE} has an equivalent formulation in terms of weighted posets.

\begin{theorem}\label[theorem]{thm:KPd}
Let $(P,w,d)$ be a weighted poset with $n$ elements.
Then
\begin{equation}\label{eq:KPd}
K_P^d(\xvec)=
\sum_{\alpha\vDash n}
\sum_{\sigma\in\ILinExt_{\alpha}^{\ast}(P,w)}
\frac{\Psi_{\beta}(\xvec)}{z_{\beta}}
\prod_{i=1}^{\ell(\alpha)}d_{\min\compp{i}{\alpha}(\sigma)}
\,,
\end{equation}
where $\beta=\beta(d,\alpha,\sigma)$ is the composition of $\abs{d}$ defined by
\[
\beta_i
=\sum_{\poseta\in\compp{i}{\alpha}(\sigma)}d_{\poseta}
\]
for all $i\in[\ell(\alpha)]$.

Equivalently,
\begin{equation}\label{eq:KPd_opsurj}
K_P^d(\xvec)=
\sum_{\alpha\vDash n}
\sum_{f\in\opsurj_{\alpha}^{\ast}(P)}
\frac{\Psi_{\beta}(\xvec)}{z_{\beta}}
\prod_{j=1}^{\ell(\alpha)}d_{\min f^{-1}(j)}
\,,
\end{equation}
where $\beta=\beta(d,\alpha,f)$ is the composition of $\abs{d}$ defined by
\begin{equation}\label{eq:KPd_opsurj_beta}
\beta_i
=\sum_{\poseta\in f^{-1}(i)}d_{\poseta}
\end{equation}
for all $i\in[\ell(\alpha)]$.

In particular, $K_{P}^d$ is $\Psi$-positive.
\end{theorem}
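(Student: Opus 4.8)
The plan is to reduce \cref{thm:KPd} to \cref{thm:KPE}, which was already stated earlier in the excerpt, via the translation between weighted posets and partitioned posets outlined in the text preceding the theorem. First I would observe that a weighted poset $(P,w,d)$ can be encoded as a partitioned poset $(\widehat P,\widehat w,E)$ as follows: replace each element $\poseta\in P$ by a chain $C_{\poseta}$ of $d_{\poseta}$ elements, let $\widehat P$ be the poset in which these chains are stacked consistently with the order of $P$ (so every element of $C_{\poseta}$ lies below every element of $C_{\posetb}$ whenever $\poseta<_P\posetb$), equip $\widehat P$ with any natural labeling $\widehat w$, and let $E$ be the chain congruence whose equivalence classes are exactly the chains $C_{\poseta}$. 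Condition~(ii) in the definition of a chain congruence holds precisely because the chains were stacked; condition~(i) is immediate. By construction a reverse $\widehat P$-partition that is constant on each class $C_{\poseta}$ is the same datum as a reverse $P$-partition $f$, contributing $\prod_{\poseta}\xvar_{f(\poseta)}^{d_{\poseta}}$, so that $K_{\widehat P,E}(\xvec)=K_P^d(\xvec)$.

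Next I would match the index sets on the right-hand sides. Applying \cref{thm:KPE} to $(\widehat P,\widehat w,E)$ gives an expansion over $\alpha\vDash|d|$ and over $f\in\opsurj_{\alpha}^{\ast}(\widehat P,E)$. The key point is that an order-preserving surjection of $\widehat P$ onto a chain that is constant on each class $C_{\poseta}$ is exactly an order-preserving surjection $g:P\to[\ell]$ together with no extra choices; moreover $g\in\opsurj^{\ast}(P)$ if and only if the corresponding surjection of $\widehat P$ lies in $\opsurj^{\ast}(\widehat P)$, since a block $g^{-1}(i)$ has a unique minimal element in $P$ precisely when the union of the corresponding chains has a unique minimal element in $\widehat P$. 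Under this bijection, the block $f^{-1}(i)$ of $\widehat P$ has size $\sum_{\poseta\in g^{-1}(i)}d_{\poseta}$, which is exactly $\beta_i$ as defined in \eqref{eq:KPd_opsurj_beta}, so the composition $\alpha$ appearing in \cref{thm:KPE} equals the $\beta=\beta(d,g,\alpha')$ of \cref{thm:KPd} (where $\alpha'=\alpha(g)\vDash n$). Finally $\big|[\min f^{-1}(i)]_E\big|=d_{\min g^{-1}(i)}$, because the minimal class of the union of chains is $C_{\poseta}$ for $\poseta=\min g^{-1}(i)$. Substituting these identifications termwise turns \eqref{eq:KPE_opsurj} into \eqref{eq:KPd_opsurj}, and then \cref{prop:opsurj} (the bijection $\ILinExt_{\alpha}^{\ast}(P,w)\leftrightarrow\opsurj_{\alpha}^{\ast}(P)$) rewrites this as \eqref{eq:KPd}. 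The $\Psi$-positivity is then immediate since every coefficient $\prod_i d_{\min f^{-1}(i)}$ is a positive integer and $z_{\beta}>0$.

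The step I expect to require the most care is the bookkeeping in the last paragraph: one must check that summing over $\alpha\vDash|d|$ and $f\in\opsurj_{\alpha}^{\ast}(\widehat P,E)$ corresponds, without overcounting or undercounting, to summing over $\alpha'\vDash n$ and $f\in\opsurj_{\alpha'}^{\ast}(P)$, and that the composition $\beta$ is genuinely a function of $(d,\alpha',f)$ as claimed rather than depending on further data. This is purely combinatorial and not deep, but it is where an off-by-one or a misidentified block would hide. An alternative, more self-contained route would be to avoid \cref{thm:KPE} entirely and instead run the proof of \cref{thm:KP_positive} directly for $K_P^d$: expand $K_P^d$ into fundamental quasisymmetric functions (there is a standard such expansion for weighted/multiset $P$-partitions), then apply \cref{thm:gesselInPSum} and a weighted variant of the sign-reversing involution in \cref{thm:ILinExt}. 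I would prefer the reduction to \cref{thm:KPE}, as it reuses the hard analytic content already in place and isolates the remaining work as a transparent change of variables.
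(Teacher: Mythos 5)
Your proposal is correct and follows essentially the same route as the paper: the paper proves \cref{thm:KPd} precisely by the chain-blowup construction (replacing each $\poseta$ by a chain $C_{\poseta}$ with $d_{\poseta}$ elements to obtain a partitioned poset with chain congruence $E$), transporting order-preserving surjections between $\opsurj_{\alpha}^{\ast}(P)$ and $\opsurj_{\beta}^{\ast}(Q,E)$ with $\beta$ as in \eqref{eq:KPd_opsurj_beta}, deducing \eqref{eq:KPd_opsurj} from \eqref{eq:KPE_opsurj}, and then using \cref{prop:opsurj} to pass to the linear-extension form \eqref{eq:KPd}. Your bookkeeping of blocks, minimal elements, and class sizes matches the paper's argument, so no gap remains.
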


\begin{proof}[Proof of equivalence of the statements in Theorems~\ref{thm:KPE} and~\ref{thm:KPd}]
To see that \eqref{eq:KPE} and \eqref{eq:KPE_opsurj} are equivalent, note that the bijection $\sigma\mapsto f_{\sigma}$ from \cref{prop:opsurj} restricts to a bijection from $\ILinExt_{\alpha}^{\ast}(P,w,E)$ to $\opsurj_{\alpha}^{\ast}(P,E)$.
The claimed equivalence follows from $\compp{i}{\alpha}(\sigma)=f_{\sigma}^{-1}(i)$.

The equivalence of \eqref{eq:KPd} and \eqref{eq:KPd_opsurj} follows from \cref{prop:opsurj} as above.

To see that \cref{thm:KPd} implies \cref{thm:KPE} let $E$ be a chain congruence on a finite poset $P$, and let $P/E$ be the associated quotient poset.
Every order-preserving surjection $f\in\opsurj_{\beta}^{\ast}(P,E)$ yields an order-preserving surjection $\bar f\in\opsurj_{\alpha}^{\ast}(P/E)$ where the type $\beta$ of $f$ is related to the type $\alpha$ of $\bar f$ by \eqref{eq:KPd_opsurj_beta}.
Indeed, this correspondence defines a bijection from $\opsurj_{\beta}^{\ast}(P,E)$ to $\opsurj_{\alpha}^{\ast}(P/E)$.
Define a weight $d$ on $P/E$ by assigning to each equivalence class $[\poseta]_E$ its cardinality.
Then the formula \eqref{eq:KPd_opsurj} for $K_{P/E}^d$ implies \eqref{eq:KPE_opsurj}.

Conversely, let $(P,w,d)$ be a weighted poset.
Define a poset $Q$ by replacing each element $\poseta\in P$ by a chain $C_{\poseta}$ with $d_{\poseta}$ elements.
That is, $Q=\bigsqcup_{\poseta\in P}C_{\poseta}$ and for $\posetj{1}\in C_{\poseti{1}}$ and $\posetj{2}\in C_{\poseti{2}}$, where $\poseti{1}\neq\poseti{2}$, we have $\posetj{1}\leq_Q\posetj{2}$ if and only if $\poseti{1}\leq_P\poseti{2}$.
The poset $Q$ comes equipped with a chain congruence $E$, namely $\posetj{1}\overset{E}{\sim}\posetj{2}$ if and only if $\posetj{1},\posetj{2}\in C_{\poseta}$ for some element $\poseta\in P$.
Given an order-preserving surjection $f\in\opsurj_{\alpha}^{\ast}(P)$ define an order-preserving surjection $\hat f\in\opsurj_{\beta}^{\ast}(Q,E)$ by $\hat f(\posetb)=f(\poseta)$ if $\posetb\in C_{\poseta}$.
This defines a bijection from $\opsurj_{\alpha}^{\ast}(P)$ to $\opsurj_{\beta}^{\ast}(Q,E)$ where $\alpha$ and $\beta$ are related by \eqref{eq:KPd_opsurj_beta}.
Hence the formula \eqref{eq:KPE_opsurj} for $K_{Q,E}$ implies \eqref{eq:KPd_opsurj}, and \cref{thm:KPd} follows from \cref{thm:KPE}.
\end{proof}

\begin{proof}[Proof of \cref{thm:KPE}~\eqref{eq:KPE_opsurj}]
Given an equivalence relation $R$ let $\classes(R)$ denote the set of its equivalence classes, and let $c(R)=\abs{\classes(R)}$ denote the number of equivalence classes.

We prove the claim by induction on $n-c(E)$.

Clearly the case $c(E)=n$ reduces to \cref{thm:KP_positive}, so we may assume that there exists $C\in\classes(E)$ with $\abs{C}\geq2$, and that the claim holds for all chain congruences on $P$ with more equivalence classes.
Let $C=\{\poseti{0},\dots,\poseti{k}\}$ for some $k\in[n-1]$, so that $\poseti{0}=\min C$ and $\poseti{k}=\max C$.

Define two new equivalence relations on $P$ by
\begin{equation*}
\begin{split}
\classes(E')
&\coloneqq\classes(E)\setminus\{C\}\cup\{\{\poseti{0}\},C\setminus\{\poseti{0}\}\}
\,,
\\
\classes(E'')
&\coloneqq\classes(E)\setminus\{C\}\cup\{\{\poseti{k}\},C\setminus\{\poseti{k}\}\}
\,.
\end{split}
\end{equation*}
Moreover define a new naturally labeled poset $(P',w)$ by removing all order relations $\poseti{i}<\poseti{k}$ for $i\in\{0,\dots,k-1\}$, see \cref{fig:KPE_recursion}.

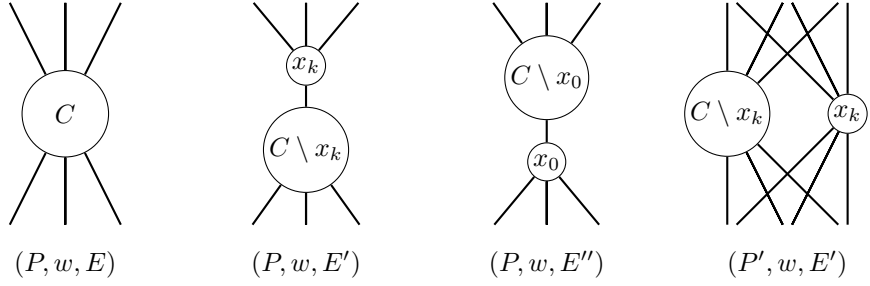
\begin{figure}[t]
\centering
\begin{tikzpicture}[xscale=0.8,yscale=0.8,baseline=40pt,
circ/.style={circle,draw,inner sep=1pt, minimum width=14pt}
]
\begin{scope}
\node (1) at (-1, 0) {$ $};
\node (2) at ( 0, 0) {$ $};
\node (3) at ( 1, 0) {$ $};
\node[circ,minimum width=7.6ex] (4) at ( 0, 2) {$C$};
\node (5) at ( -1, 4) {$ $};
\node (6) at ( 0, 4) {$ $};
\node (7) at ( 1, 4) {$ $};
\node (8) at ( 0, -.5) {$(P,w,E)$};
\draw[black,thick] (1)--(4)--(2)--(4)--(3);
\draw[black,thick] (5)--(4)--(6)--(4)--(7);
\end{scope}
\begin{scope}[xshift=4cm]
\node (1) at (-1, 0) {$ $};
\node (2) at ( 0, 0) {$ $};
\node (3) at ( 1, 0) {$ $};
\node[circ] (4a) at ( 0, 1.4) {$C\setminus\poseti{k}$};
\node[circ] (4b) at ( 0, 2.8) {$\poseti{k}$};
\node (5) at ( -1, 4) {$ $};
\node (6) at ( 0, 4) {$ $};
\node (7) at ( 1, 4) {$ $};
\node (8) at ( 0, -.5) {$(P,w,E')$};
\draw[black,thick] (4a)--(4b);
\draw[black,thick] (1)--(4a)--(2)--(4a)--(3);
\draw[black,thick] (5)--(4b)--(6)--(4b)--(7);
\end{scope}
\begin{scope}[xshift=8cm]
\node (1) at (-1, 0) {$ $};
\node (2) at ( 0, 0) {$ $};
\node (3) at ( 1, 0) {$ $};
\node[circ] (4a) at ( 0, 1.2) {$\poseti{0}$};
\node[circ] (4b) at ( 0, 2.6) {$C\setminus\poseti{0}$};
\node (5) at ( -1, 4) {$ $};
\node (6) at ( 0, 4) {$ $};
\node (7) at ( 1, 4) {$ $};
\node (8) at ( 0, -.5) {$(P,w,E'')$};
\draw[black,thick] (4a)--(4b);
\draw[black,thick] (1)--(4a)--(2)--(4a)--(3);
\draw[black,thick] (5)--(4b)--(6)--(4b)--(7);
\end{scope}
\begin{scope}[xshift=12cm]
\node (1) at (-1, 0) {$ $};
\node (2) at ( 0, 0) {$ $};
\node (3) at ( 1, 0) {$ $};
\node[circ] (4a) at ( -1, 2) {$C\setminus\poseti{k}$};
\node[circ] (4b) at ( 1, 2) {$\poseti{k}$};
\node (5) at ( -1, 4) {$ $};
\node (6) at ( 0, 4) {$ $};
\node (7) at ( 1, 4) {$ $};
\node (8) at ( 0, -.5) {$(P',w,E')$};
\draw[black,thick] (1)--(4a)--(2)--(4a)--(3);
\draw[black,thick] (5)--(4b)--(6)--(4b)--(7);
\draw[black,thick] (1)--(4b)--(2)--(4b)--(3);
\draw[black,thick] (5)--(4a)--(6)--(4a)--(7);
\end{scope}
\end{tikzpicture}
\caption{The four partitioned posets from the proof of \cref{thm:KPE}.}
\label[figure]{fig:KPE_recursion}
\end{figure}

Note that $E'$ and $E''$ are chain congruences on $P$, and that $E'$ is a chain congruence on $P'$.
Thus $K_{P,E'}$, $K_{P,E''}$ and $K_{P',E'}$ satisfy \eqref{eq:KPE_opsurj} by the induction hypothesis.

The function $K_{P,E}$ satisfies the recursion
\begin{equation}\label{eq:KPE_recursion}
K_{P,E}(\xvec)
=K_{P,E'}(\xvec)+K_{P,E''}(\xvec)-K_{P',E'}(\xvec)
.
\end{equation}
This identity follows directly from the definition of $K_{P,E}$ and is analogous to the fact that
\begin{equation*}
\begin{split}
\abs{\{(i,j)\in[n]^2:i=j\}}
&=
\abs{\{(i,j)\in[n]^2:i\leq j\}}
+\abs{\{(i,j)\in[n]^2:i\geq j\}}
\\
&\quad
-\abs{\{(i,j)\in[n]^2\}}
\,.
\end{split}
\end{equation*}
Now fix a composition $\alpha$ of $n$ with $\ell$ parts and a surjective map $f:P\to[\ell]$ of type $\alpha$.
We need to show that the contribution of $f$ predicted by \eqref{eq:KPE_opsurj} is the same on both sides of \eqref{eq:KPE_recursion}.

First assume that $f\in\opsurj_{\alpha}^{\ast}(P,E)$.
Then there exists $i\in[\ell]$ with $f(\poseta)=i$ for all $\poseta\in C$.
It follows that
\begin{equation*}
f\in
\opsurj_{\alpha}^{\ast}(P,E')
\cap\opsurj_{\alpha}^{\ast}(P,E'')
\end{equation*}
and that $f\in\opsurj_{\alpha}(P')$ is an order-preserving surjection of type $\alpha$.

If $\poseti{0}=\min f^{-1}(i)$ then $f\notin\opsurj_{\alpha}^{\ast}(P',E')$ because $f^{-1}(i)$ has two minimal elements in $P'$.
The contribution of $f$ on both sides of \eqref{eq:KPE_recursion} is
\begin{equation*}
\abs{C}\cdot\prod_{j\neq i}\big|[\min f^{-1}(j)]_E\big|
=
((\abs{C}-1)+1-0)\cdot\prod_{j\neq i}\big|[\min f^{-1}(j)]_E\big|
\,.
\end{equation*}
On the other hand, if $\min f^{-1}(i)<_P\poseti{0}$ then $f\in\opsurj_{\alpha}^{\ast}(P',E')$.
The contribution of $f$ on both sides of \eqref{eq:KPE_recursion} is
\begin{equation*}
\prod_{j=1}^{\ell}\big|[\min f^{-1}(j)]_E\big|
=
(1+1-1)\cdot\prod_{j=1}^{\ell}\big|[\min f^{-1}(j)]_E\big|
\,.
\end{equation*}

To finish the proof we need to show that the contributions of surjective maps $f:P\to[\ell]$ of type $\alpha$ that do not lie in $\opsurj_{\alpha}^{\ast}(P,E)$ on the right hand side of \eqref{eq:KPE_recursion} cancel.

Let
\begin{equation*}
\opsurj_{\alpha}^+(P',E')
\coloneqq
\{f\in\opsurj_{\alpha}^{\ast}(P',E'):f(\poseti{0})<f(\poseti{k})
\}
\,,
\end{equation*}
and similarly
\begin{equation*}
\opsurj_{\alpha}^-(P',E')
\coloneqq
\{f\in\opsurj_{\alpha}^{\ast}(P',E'):f(\poseti{0})>f(\poseti{k})
\}
\,.
\end{equation*}

If $f\in\opsurj_{\alpha}^{\ast}(P,E')$ then 
\begin{equation*}
f\notin\opsurj_{\alpha}^{\ast}(P,E)
\quad\Longleftrightarrow\quad
f(\poseti{k-1})<f(\poseti{k})
.
\end{equation*}
In this case $f\in\opsurj_{\alpha}^+(P',E')$, and the terms corresponding to $f$ cancel.

If $f\in\opsurj_{\alpha}^{\ast}(P,E'')$ then 
\begin{equation*}
f\notin\opsurj_{\alpha}^{\ast}(P,E)
\quad\Longleftrightarrow\quad
f(\poseti{0})<f(\poseti{1})
.
\end{equation*}
If this is the case define $f':P\to[\ell]$ by $f'(\poseti{0})=f(\poseti{k})$, $f'(\poseti{k})=f(\poseti{0})$ and $f'(\poseta)=f(\poseta)$ for all $\poseta\in P\setminus\{\poseti{0},\poseti{k}\}$.
It is not difficult to see that $f'\in\opsurj_{\alpha}^{-}(P',E')$, and that the terms coming from $f$ and $f'$ cancel.

Finally, if $f\in\opsurj_{\alpha}^{\ast}(P',E')$ then 
\begin{equation*}
f\notin\opsurj_{\alpha}(P,E)
\quad\Longleftrightarrow\quad
f(\poseti{0})\neq f(\poseti{k})
.
\end{equation*}
Hence we are in one of the cases above, and the proof is complete.
\end{proof}

An immediate consequence of \cref{lem:chain_congruence} and \cref{thm:KPE} is the following result.

\begin{corollary}\label[corollary]{cor:KPE_positive}
Let $(P,w,E)$ be a partitioned poset.
Then $K_{P,E}$ is $\Psi$-positive.
\end{corollary}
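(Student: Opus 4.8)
The plan is to reduce the general case to the case of a chain congruence, where Theorem~\ref{thm:KPE} applies directly, so that the corollary becomes a two-line deduction. First I would invoke Lemma~\ref{lem:chain_congruence}: starting from an arbitrary partitioned poset $(P,w,E)$, it produces a partitioned poset $(P',w',E')$ with $K_{P,E}(\xvec)=K_{P',E'}(\xvec)$ in which $E'$ is a chain congruence on $P'$. This lemma is the one substantive input; for the purposes of this corollary it is used purely as a black box, so no work is needed here beyond citing it.

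Second, I would apply Theorem~\ref{thm:KPE} to the partitioned poset $(P',w',E')$. Since $E'$ is a chain congruence on $P'$, formula~\eqref{eq:KPE} expresses $K_{P',E'}(\xvec)$ as $\sum_{\alpha\vDash n}\frac{\Psi_{\alpha}(\xvec)}{z_{\alpha}}\sum_{\sigma\in\ILinExt_{\alpha}^{\ast}(P',w',E')}\prod_{i=1}^{\ell(\alpha)}\big|[\min\compp{i}{\alpha}(\sigma)]_{E'}\big|$, where $n=|P'|=|P|$. Every factor $\big|[\min\compp{i}{\alpha}(\sigma)]_{E'}\big|$ is a nonnegative integer, the inner sum is a sum of such products, and $z_{\alpha}>0$; hence each coefficient of $\Psi_{\alpha}$ in the expansion of $K_{P',E'}$ is nonnegative. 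Thus $K_{P',E'}$ is $\Psi$-positive. (One could equivalently route this through Theorem~\ref{thm:KPd} via the weighted-poset reformulation, but the chain-congruence formulation is the most direct.)

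Finally, combining the two steps, $K_{P,E}(\xvec)=K_{P',E'}(\xvec)$ is $\Psi$-positive, which is exactly the assertion. The only place where any real difficulty could arise is Lemma~\ref{lem:chain_congruence} itself --- constructing $P'$ and $E'$ and verifying the identity $K_{P,E}=K_{P',E'}$ --- but that work has already been carried out, so the corollary is immediate.
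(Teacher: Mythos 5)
Your proposal is correct and follows exactly the route the paper takes: the corollary is stated there as an immediate consequence of \cref{lem:chain_congruence} (reducing an arbitrary equivalence relation to a chain congruence while preserving $K_{P,E}$) combined with \cref{thm:KPE}, whose coefficients are manifestly nonnegative. Nothing is missing.
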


Note that \cref{thm:KPE} does not apply unless $E$ is a chain congruence.
Furthermore, \cref{thm:KPd} requires that the weight $d$ is indeed a vector of \emph{positive} integers.
For example, $K_P^d$ given by the poset $\poseta < \posetb < \posetc$ 
with weights $d_\poseta=1$, $d_\posetb=0$ and $d_\posetc=2$
is not $\Psi$-positive.

\section{Applications}
\label[section]{sec:applications}

In this section we apply Theorems~\ref{thm:KP_opsurj} and~\ref{thm:KPd} to derive the expansions of various families of quasisymmetric functions into quasisymmetric power sums.

\subsection{Complete homogeneous symmetric functions}
\label[section]{sec:h}

As a warm up we now derive the expansion of the complete homogeneous symmetric function $\completeH_{\lambda}$ into power sum symmetric functions.

Let $P_\lambda$ be the poset consisting of $\ell$ disjoint chains of lengths $\lambda_i$.
Then $\completeH_\lambda = K_{P_{\lambda}}$ and
\[
\completeH_{\lambda}(\xvec)
=
\sum_{\mu \vdash n} \frac{\psumP_\mu(\xvec)}{z_{\mu}} |\opsurj^\ast_{\mu}(P_\lambda)|.
\]
by \cref{thm:KP_opsurj}.

Another interpretation of these coefficients is given by so called ordered $\mu$-bricks,
see \cite{Egecioglu1991}.
An \emph{ordered $\mu$-brick tabloid} of shape $\lambda$
is a Young diagram of shape $\lambda$ 
filled with labeled \emph{bricks} of sizes given by $\mu$.
The bricks are placed in the diagram such that the 
bricks in each row are sorted with increasing label.
Let $OB_{\mu \lambda}$ be the set of such ordered $\mu$-brick tabloids of shape $\lambda$.

It is easy to see that $\opsurj^\ast_{\mu}(P_\lambda)$ is in bijection with $OB_{\mu\lambda}$.
For instance, let $\lambda=532$ and $\mu=322111$.
Then $\opsurj^\ast_{\mu}(P_\lambda)$
contains the following order-preserving surjection, which is in natural correspondence with the shown ordered brick tabloid.
\begin{align*}
\begin{tikzpicture}[scale=0.45,
circ/.style={circle,draw,inner sep=0.8pt, minimum width=12pt},entry/.style={xshift=5mm,yshift=5mm,font= \small},thickLine/.style={very thick,line join=round}
]
\begin{scope}[yscale=1.5,yshift=-2cm]
\node[circ] (1) at ( 0, 1) {$1$};
\node[circ] (2) at ( 0, 2) {$1$};
\node[circ] (3) at ( 0, 3) {$1$};
\node[circ] (4) at ( 0, 4) {$4$};
\node[circ] (5) at ( 0, 5) {$6$};
\begin{scope}[xshift=1cm]
\node[circ] (6) at ( 1, 1) {$3$};
\node[circ] (7) at ( 1, 2) {$3$};
\node[circ] (8) at ( 1, 3) {$5$};
\end{scope}
\begin{scope}[xshift=2cm]
\node[circ] (9) at ( 2, 1) {$2$};
\node[circ] (10)at ( 2, 2) {$2$};
\end{scope}
\draw[black,thick] (3)--(4)--(5);
\draw[black,thick] (7)--(8);
\draw[black,very thick] (1)--(2)--(3);
\draw[black,very thick] (6)--(7);
\draw[black,very thick] (9)--(10);
\end{scope}
%
\begin{scope}[xshift=10cm]
\draw[thick](0,1)--(2,1)(0,2)--(3,2)(2,1)--(2,2)(3,2)--(3,3)(4,2)--(4,3);
\draw[thickLine](0,0)--(2,0)--(2,1)--(3,1)--(3,2)--(5,2)--(5,3)--(0,3)--cycle;
\draw[entry](1,2)node{$1$}(3,2)node{$4$}(4,2)node{$6$}(.5,1)node{$3$}(2,1)node{$5$}(.5,0)node{$2$}(1,0);
\end{scope}
\end{tikzpicture}
\end{align*}

\subsection{Power sum symmetric functions}
\label[section]{sec:p}

We have already stated in \eqref{eq:powersum_Psi_expansion} the expansion of power sum symmetric functions $\psumP_{\lambda}$ into quasisymmetric power sums $\Psi_{\alpha}$.
We now give an independent proof of this result using the tools we have developed.

Let $\lambda$ be a partition of $n$ with $\ell$ parts, and let $A$ denote the antichain with elements $1,\dotsc,\ell$.
Then $\lambda$ defines a weight on $A$.
By \cref{thm:KPd} we have
\[
\psumP_{\lambda}(\xvec)
=K_{A}^{\lambda}(\xvec)
=\sum_{\sigma\in\symS_{\ell}}
\frac{\Psi_{\sigma(\lambda)}(\xvec)}{z_{\sigma(\lambda)}}\prod_{i=1}^{\ell}\lambda_i
=z_{\lambda}
\sum_{\alpha\sim\lambda}
\frac{\Psi_{\alpha}(\xvec)}{z_{\alpha}}
=\sum_{\alpha\sim\lambda}
\Psi_{\alpha}(\xvec)
\,.
\]
Here $\sigma(\lambda)$ denotes the composition $(\lambda_{\sigma^{-1}(1)},\dotsc,\lambda_{\sigma^{-1}({\ell})})$, and the last two sums are taken over all compositions $\alpha$ whose parts rearrange to $\lambda$.

\subsection{Schur functions}
\label[section]{sec:schur}

While Schur functions are not $p$-positive, as is evident from the famous Murnaghan--Nakayama rule, 
we still obtain an expansion of Schur functions into quasisymmetric power sums from the results in \cref{sec:gessel}.
The well-known formula below expresses the Schur functions in 
the fundamental quasisymmetric functions (see for example \cite[Thm. 7.19.7]{StanleyEC2}):
\begin{align}\label{eq:SchurintoGessel}
\schurS_\lambda(\xvec) = \sum_{T \in \SYT(\lambda)} \gessel_{n,\DES(T)}(\xvec).
\end{align}
The set $\SYT(\lambda)$ is the set of standard Young tableaux of shape $\lambda \vdash n$,
and the descent set of such a standard Young tableau is the set of entries $i$ such that $i+1$ appears in a row with a higher index.
The following result due to Y.~Roichman is an immediate consequence of \eqref{eq:SchurintoGessel} and \cref{prop:fundToPowerSumLift}.

\begin{theorem}[{\cite[Thm. 4]{Roichman1997}}]\label{thm:schurRoichmanPexp}
Let $\lambda$ be a partition of $n$.
Then the expansion of the Schur function $\schurS_{\lambda}$ into power sum symmetric functions is given by
\begin{align}\label{eq:schurRoichmanPexp}
\schurS_\lambda(\xvec)
=
\sum_{\mu \vdash n}
\frac{\psumP_\mu(\xvec)}{z_\mu}
\sum_{\substack{ T \in \SYT(\lambda) \\  \DES(T) \in U_\mu }}
(-1)^{\DES(T) \setminus S_\mu}
\,.
\end{align}
\end{theorem}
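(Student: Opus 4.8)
The plan is to combine the two facts the paper has just recalled: the classical expansion \eqref{eq:SchurintoGessel} of $\schurS_\lambda$ into the fundamental basis, and the general conversion result \cref{prop:fundToPowerSumLift}, which rewrites the fundamental expansion of any \emph{symmetric} function as its power sum expansion. Since $\schurS_\lambda$ is symmetric and \eqref{eq:SchurintoGessel} writes it as $\schurS_\lambda(\xvec) = \sum_{S\subseteq[n-1]} c_S\,\gessel_{n,S}(\xvec)$ with coefficients $c_S = \#\{T\in\SYT(\lambda):\DES(T)=S\}$, the hypotheses of \cref{prop:fundToPowerSumLift} are satisfied, so I would apply it directly.

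Concretely, the first step is to record the value of $c_S$ as above. The second step is to substitute this into the conclusion of \cref{prop:fundToPowerSumLift}, obtaining
\[
\schurS_\lambda(\xvec)
= \sum_{\mu\vdash n}\frac{\psumP_\mu(\xvec)}{z_\mu}\sum_{S\in U_\mu}(-1)^{|S\setminus\compset{\mu}|}\,c_S
\,.
\]
The third step is pure bookkeeping: since $c_S$ counts the standard Young tableaux of shape $\lambda$ whose descent set is exactly $S$, the inner sum $\sum_{S\in U_\mu}(-1)^{|S\setminus\compset{\mu}|}c_S$ equals $\sum_{T\in\SYT(\lambda),\,\DES(T)\in U_\mu}(-1)^{|\DES(T)\setminus\compset{\mu}|}$, which is precisely the inner sum appearing in \eqref{eq:schurRoichmanPexp}. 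This completes the derivation.

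There is essentially no obstacle to overcome: all of the real content has already been carried out, residing in \cref{thm:gesselInPSum} (hence ultimately in \cref{thm:CONS} and the hook-length count \cref{prop:treeHook}) on which \cref{prop:fundToPowerSumLift} rests, together with the classical tableau formula \eqref{eq:SchurintoGessel}. The only points to verify are trivial: that $\schurS_\lambda$ is symmetric, so that \cref{prop:fundToPowerSumLift} is applicable, and that the two finite summations — over partitions $\mu$ and over tableaux $T$ — may be interchanged freely.
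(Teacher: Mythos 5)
Your proposal is correct and matches the paper exactly: the paper also obtains \cref{thm:schurRoichmanPexp} as an immediate consequence of the expansion \eqref{eq:SchurintoGessel} with $c_S=\#\{T\in\SYT(\lambda):\DES(T)=S\}$ plugged into \cref{prop:fundToPowerSumLift}. Nothing further is needed.
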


\begin{example}
Let $\lambda = (3,3)$. Then $\SYT(\lambda)$ is the following set of standard Young tableaux,
where the descents have been marked bold.
\begin{align*}
\begin{tikzpicture}[scale=.4,thinLine/.style={line width=\lineThickness pt},thickLine/.style={line width=2*\lineThickness pt,line join=round},entry/.style={xshift=5mm,yshift=5mm,font= \small}]
\pgfmathsetmacro{\lineThickness}{.5}
\begin{scope}
\draw[thinLine](0,1)--(3,1)(1,0)--(1,2)(2,0)--(2,2);
\draw[thickLine](0,0)--(3,0)--(3,2)--(0,2)--cycle;
\draw[entry](0,1)node{$1$}(1,1)node{$2$}(2,1)node{$\boldsymbol{3}$}(0,0)node{$4$}(1,0)node{$5$}(2,0)node{$6$};
\end{scope}
\begin{scope}[xshift=4cm]
\draw[thinLine](0,1)--(3,1)(1,0)--(1,2)(2,0)--(2,2);
\draw[thickLine](0,0)--(3,0)--(3,2)--(0,2)--cycle;
\draw[entry](0,1)node{$1$}(1,1)node{$\boldsymbol{2}$}(2,1)node{$\boldsymbol{4}$}(0,0)node{$3$}(1,0)node{$5$}(2,0)node{$6$};
\end{scope}
\begin{scope}[xshift=8cm]
\draw[thinLine](0,1)--(3,1)(1,0)--(1,2)(2,0)--(2,2);
\draw[thickLine](0,0)--(3,0)--(3,2)--(0,2)--cycle;
\draw[entry](0,1)node{$\boldsymbol{1}$}(1,1)node{$3$}(2,1)node{$\boldsymbol{4}$}(0,0)node{$2$}(1,0)node{$5$}(2,0)node{$6$};
\end{scope}
\begin{scope}[xshift=12cm]
\draw[thinLine](0,1)--(3,1)(1,0)--(1,2)(2,0)--(2,2);
\draw[thickLine](0,0)--(3,0)--(3,2)--(0,2)--cycle;
\draw[entry](0,1)node{$1$}(1,1)node{$\boldsymbol{2}$}(2,1)node{$\boldsymbol{5}$}(0,0)node{$3$}(1,0)node{$4$}(2,0)node{$6$};
\end{scope}
\begin{scope}[xshift=16cm]
\draw[thinLine](0,1)--(3,1)(1,0)--(1,2)(2,0)--(2,2);
\draw[thickLine](0,0)--(3,0)--(3,2)--(0,2)--cycle;
\draw[entry](0,1)node{{$\boldsymbol{1}$}}(1,1)node{$\boldsymbol{3}$}(2,1)node{$\boldsymbol{5}$}(0,0)node{$2$}(1,0)node{$4$}(2,0)node{$6$};
\end{scope}
\end{tikzpicture}
\end{align*}
If $\mu = (2,2,2)$ then $S_\mu = \{2,4\}$.
We can check that the descent sets of all five tableaux
are $\mu$-unimodal, and contribute the signs $-1$, $+1$, $-1$, $-1$ and $-1$ respectively.
Furthermore, $U_\mu$ is the set of all subsets of $[5]$.
The formula in \eqref{eq:schurRoichmanPexp}
then gives 
\[
\schurS_{33}(\xvec) = \dotsb+ (-3)\frac{\psumP_{222}(\xvec)}{z_{222}} + \dotsb.
\]
In contrast, the Murnaghan--Nakayama rule --- given as a sum over so called rim-hook tableaux --- is cancellation-free
for this choice of $\lambda$ and $\mu$, and is given as a sum over exactly three rim-hook tableaux.

\end{example}

\subsection{Chromatic quasisymmetric functions}
\label[section]{sec:chromatic}

In 1995, R.~Stanley introduced a symmetric function generalization of the 
chromatic polynomal, \cite{Stanley95Chromatic}. 
This definition was later refined by J.~Shareshian and M.~Wachs in~\cite{ShareshianWachs2016},
where a $q$-parameter was introduced.

\begin{definition}*\label{def:qsymChromatic}
Let $G$ be a directed graph (no loops, but multiple edges are allowed) on the vertex set $[n]$. A \emph{coloring} of $G$
is an assignment of colors in $\setP$ to the vertices. A coloring is \emph{proper}
if vertices connected by an edge are assigned different colors.
An \emph{ascent}\footnote{In the case of multiple edges $(i,j)$, the contribution to $\asc$ is the multiplicity.} 
of a coloring $\coloring$ is a directed edge $(i,j)$ of $G$ 
such that $\coloring(i) < \coloring(j)$. 
The number of ascents of a coloring is denoted $\asc(\coloring)$.

The \emph{chromatic quasisymmetric function} of $G$ is defined in \cite{ShareshianWachs2016}\footnote{
The definition in \cite{ShareshianWachs2016} is slightly less general, and uses the acyclic orientation of 
the edges determined by the labeling of the graph.
The more general definition was introduced in \cite{Ellzey2016}.
} as
\[
\chrom_G(\xvec;q) = \sum_{\substack{\coloring: G \to \setP \\ \coloring \text{ proper}}} 
\xvar_{\coloring(1)} \dotsm \xvar_{\coloring(n)} q^{\asc(\coloring)}
.
\qeddefhere
\]
\end{definition}
When $q=1$ we obtain the \emph{chromatic symmetric function} $\chrom_G(\xvec)$ in
\cite{Stanley95Chromatic}. 
The function $\chrom_G(\xvec)$ is a symmetric function, and does not depend on the orientation of $G$.
R.~Stanley proves that $\omega \chrom_G(\xvec)$ is $p$-positive for any (undirected) graph $G$.

For some choices of directed graphs $G$, $\chrom_G(\xvec;q)$ is a symmetric function. 
A class of such graphs is characterized by B.~Ellzey in \cite[Thm.~5.5]{Ellzey2016}.
In particular, if $G$ is the incomparability 
graph of a $3+1$ and $2+2$-avoiding poset (together with a certain associated orientation), then
the function $\chrom_G(\xvec;q)$ is symmetric --- these graphs are referred to as \emph{unit-interval graphs}.

B.~Ellzey also proves that $\chrom_G(\xvec;q)$
has the property that $\omega \chrom_G(\xvec;q)$ expands positively 
in the power-sum symmetric functions whenever it is symmetric.
The only part of her proof that requires the restriction to symmetric functions is the application of \cref{prop:fundToPowerSumLift}.
We can now prove the following generalization of 
the main result in \cite[Thm. 4.1]{Ellzey2016}:

\begin{theorem}\label{thm:chromPsiPos}
Let $G$ be a directed graph and consider the 
expansion into quasisymmetric power sums
\begin{equation}\label{eq:chromPsiPos}
\omega \chrom_G(\xvec;q) = \sum_{\alpha} c^G_{\alpha}(q) \frac{\Psi_\alpha(\xvec)}{z_\alpha}.
\end{equation}
Then $c^G_{\alpha}(q) \in \setN[q]$ for all compositions $\alpha$.
\end{theorem}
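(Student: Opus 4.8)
The plan is to reduce the statement to \cref{thm:KP_opsurj} (equivalently \cref{thm:KP_positive}) via the classical decomposition of a chromatic (quasi)symmetric function over acyclic orientations, refined by the ascent statistic. First I would group the proper colorings $\coloring\colon G\to\setP$ according to the orientation $\orientation$ of the underlying simple graph of $G$ obtained by directing each edge $\{i,j\}$ from the endpoint with the smaller color to the endpoint with the larger color; this is well-defined since $\coloring$ is proper, and a coloring inducing $\orientation$ exists only when $\orientation$ is acyclic. For a fixed acyclic $\orientation$ let $P_{\orientation}$ be the partial order on $[n]$ generated by $\orientation$. Then the colorings that induce $\orientation$ are exactly the maps that strictly increase along every relation of $P_{\orientation}$, i.e.\ the strict reverse $P_{\orientation}$-partitions; in particular propriety is automatic. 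Moreover a directed edge $(i,j)$ of $G$ satisfies $\coloring(i)<\coloring(j)$ precisely when $\orientation$ directs $\{i,j\}$ as $i\to j$, so $\asc(\coloring)$ equals the number $a(\orientation)$ of edges of $G$ (counted with multiplicity) that agree with $\orientation$, which depends only on $\orientation$.

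Using the identification recorded in \cref{sec:revPP} that the generating function of strict reverse $P$-partitions is $\omega K_P$, the previous paragraph yields
\[
\chrom_G(\xvec;q)=\sum_{\orientation}q^{a(\orientation)}\,\omega K_{P_{\orientation}}(\xvec)\,,
\]
where $\orientation$ ranges over the acyclic orientations of the underlying graph of $G$ (the remaining orientations have empty fibers). Applying the involution $\omega$ and using $\omega^2=\mathrm{id}$ gives
\[
\omega\chrom_G(\xvec;q)=\sum_{\orientation}q^{a(\orientation)}\,K_{P_{\orientation}}(\xvec)\,.
\]
By \cref{thm:KP_opsurj} each $K_{P_{\orientation}}$ expands as $\sum_{\alpha\vDash n}\frac{\Psi_\alpha(\xvec)}{z_\alpha}\lvert\opsurj_\alpha^{\ast}(P_{\orientation})\rvert$ with nonnegative integer coefficients, so after interchanging the two sums we obtain
\[
c^G_\alpha(q)=\sum_{\orientation}q^{a(\orientation)}\,\bigl\lvert\opsurj_\alpha^{\ast}(P_{\orientation})\bigr\rvert\in\setN[q]\,,
\]
which proves the theorem and in fact provides an explicit combinatorial description of $c^G_\alpha(q)$.

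The only step requiring real care is the first paragraph: one must verify that the assignment $\coloring\mapsto$ induced orientation genuinely partitions the proper colorings so that the fiber over $\orientation$ is exactly the set of strict reverse $P_{\orientation}$-partitions, that the $q$-weight is constant on each fiber and equal to $a(\orientation)$, and that multiple edges cause no trouble (they affect only $a(\orientation)$, not the underlying poset nor the propriety condition). This is essentially Stanley's expansion of $\omega\chrom_G$ refined by $\asc$, and is routine bookkeeping rather than a genuine obstacle. The point is that passing through $\Psi_{\alpha}$ rather than $\psumP_{\lambda}$ never requires $\omega\chrom_G$ to be symmetric, which is exactly where the proof in \cite[Thm.~4.1]{Ellzey2016} needed \cref{prop:fundToPowerSumLift} and hence the symmetry hypothesis; removing that hypothesis is the content of this generalization.
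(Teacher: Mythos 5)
Your proposal is correct and follows essentially the same route as the paper: decompose the proper colorings by the induced acyclic orientation (with the ascent statistic constant on each fiber), identify each fiber's generating function with a strict reverse $P$-partition generating function, and reduce to the $\Psi$-positivity of $K_P$ for naturally labeled posets (the paper phrases this last step via \cref{cor:strictOmegaPsiPos}, i.e.\ $\omega K_{P(\theta),w}$ with $w$ order-reversing, which is the same fact you use in the form $\omega^2=\mathrm{id}$ and \cref{thm:KP_opsurj}). Your explicit formula $c^G_\alpha(q)=\sum_{\orientation}q^{a(\orientation)}\lvert\opsurj_\alpha^{\ast}(P_{\orientation})\rvert$ matches the combinatorial interpretation the paper alludes to after the theorem.
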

\begin{proof}
Let $AO(G)$ denote the set of acyclic orientations of the graph $G$ viewed as an undirected graph.
For an orientation $\theta$, we let $\asc(\theta)$ be the number of edges oriented in the same direction as in $G$.
It is straightforward to prove (see \cite{AlexanderssonPanova2016,Ellzey2016}) that
\[
\chrom_G(\xvec;q) = \sum_{\theta \in AO(G)} q^{\asc(\theta)}  K_{P(\theta),w}(\xvec)
\]
where $P(\theta)$ is the poset obtained from $\theta$ by taking the transitive closure of the 
directed edges, and $w=w(\theta)$ is order-reversing.
The statement now follows from \cref{cor:strictOmegaPsiPos}.
\end{proof}

Note that different types of combinatorial interpretations for the coefficients $c_{\alpha}^G(q)$ in \eqref{thm:chromPsiPos} are known in certain special cases, see~\cite{Athanasiadis15,Ellzey2016}.
Our approach yields a combinatorial interpretation in the more general setting, which is similar to~\cite{Ellzey2016}.

It was conjectured in \cite[Conj. 7.6]{ShareshianWachs2016} that the coefficients $c^G_{\alpha}(q)$
are unimodal for unit-interval graphs. This conjecture is still open.
However, this conjecture does not extend to the general quasisymmetric setting.
\begin{example}
Consider the following directed graphs $G$ and $H$. 
\begin{align*}
\begin{tikzpicture}[scale=.8,baseline=-10mm,
circ/.style={circle,draw,inner sep=0.8pt, minimum width=12pt,font=\footnotesize}]
\begin{scope}
\node[] (G) at (0,2) {$G$};
\node[circ] (1) at (1,0) {$v_1$};
\node[circ] (2) at (0,1) {$v_2$};
\node[circ] (3) at (2,1) {$v_3$};
\node[circ] (4) at (1,2) {$v_4$};
\draw[black,thick,->] (1)--(2);
\draw[black,thick,->] (1)--(3);
\draw[black,thick,->] (4)--(2);
\draw[black,thick,->] (4)--(3);
\end{scope}
\begin{scope}[xshift=4cm,yshift=1cm]
\node[] (H) at (1,1) {$H$};
\node[circ] (1) at (0,0) {$v_1$};
\node[circ] (2) at (1,0) {$v_2$};
\node[circ] (3) at (2,0) {$v_3$};
\node[circ] (4) at (3,0) {$v_4$};
\node[circ] (5) at (4,0) {$v_5$};
\draw[black,thick,->] (1)--(2);
\draw[black,thick,->] (3)--(2);
\draw[black,thick,->] (4)--(3);
\draw[black,thick,->] (4)--(5);
\end{scope}
\end{tikzpicture}
\end{align*}
Then 
\begin{align*}
\omega \chrom_G(\xvec;q) &= 
(4 q+4 q^2+4 q^3) \frac{\Psi_4}{z_4}+
(2+4 q+4 q^3+2 q^4) \frac{\Psi_{13}}{z_{13}} \\
& +(4 q+8 q^2+4 q^3) \frac{\Psi_{22}}{z_{22}}+
(4 q+4 q^2+4 q^3) \frac{\Psi_{31}}{z_{31}} \\
&+
(4 q+8 q^2+4 q^3) \frac{\Psi_{112}}{z_{112}}+
(4+4 q+4 q^3+4 q^4)\frac{\Psi_{121}}{z_{121}} \\
&+
(4 q+8 q^2+4 q^3) \frac{\Psi_{211}}{z_{211}}+
(4+4 q+8 q^2+4 q^3+4 q^4) \frac{\Psi_{1111}}{z_{1111}}
\,.
\end{align*}
Note that the coefficient of $\Psi_{121}$ is not unimodal.

For the graph $H$ we have
\begin{align*}
\omega \chrom_H(\xvec;q) &= \dotsb +
(2+5 q+4 q^2+5 q^3+2 q^4) \frac{\Psi_{131}}{z_{131}} + \dotsb
\,,
\end{align*}
where the coefficient of $\Psi_{131}$ is not unimodal.
\end{example}

\subsection{\texorpdfstring{$k$}{k}-balanced chromatic quasisymmetric functions}

B.~Humpert introduces another quasisymmetric generalization of chromatic symmetric functions in \cite{Humpert2011}.

\begin{definition}*
Let $G$ be an oriented graph (no loops or multiple edges) on the vertex set $[n]$ and let $k \in \setP$.
An orientation $\theta$ of $G$ is said to be \emph{$k$-balanced} if for every undirected cycle in $G$,
walking along the cycle one traverses at least $k$ edges forward, and at least $k$ edges backwards.
Thus an orientation is acyclic if and only if it is $1$-balanced.

A proper coloring $\coloring$ of $G$ induces an acyclic orientation $\theta(\coloring)$ of $G$ 
by orienting edges towards the vertex with larger color.

The \emph{$k$-balanced chromatic quasisymmetric function} (\cite{Humpert2011} defined this only for $q=1$) is defined as
\[
\chrom^k_G(\xvec;q) = \sum_{\substack{\coloring: G \to \setP \\ \coloring \text{ proper} \\ \theta(\coloring) \text{ is $k$-balanced}}} 
\xvar_{\coloring(1)} \dotsm \xvar_{\coloring(n)} q^{\asc(\coloring)}.
\qeddefhere
\]
\end{definition}
Note that for $k=1$ we recover the quasisymmetric function in \cref{def:qsymChromatic} as $\chrom^1_G(\xvec;q) = \chrom_G(\xvec;q)$.

\begin{proposition}[{\cite[Thm. 3.4]{Humpert2011}}]
\label[proposition]{prop:Humpert}
The $k$-balanced chromatic quasisymmetric function of an oriented graph $G$ has the expansion 
\[
\chrom^k_G(\xvec;q) = \sum_{\substack{\theta \in O(G) \\ \theta \text{ is $k$-balanced }}} q^{\asc(\theta)} K_{P(\theta),w}(\xvec)
\]
where the sum taken is over all $k$-balanced orientations of $G$, $P(\theta)$ is the transitive closure of the directed edges 
and $w=w(\theta)$ is order-reversing.
\end{proposition}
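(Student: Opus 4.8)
The plan is to decompose the sum defining $\chrom^k_G(\xvec;q)$ according to the acyclic orientation induced by each proper coloring — exactly the decomposition used in the proof of \cref{thm:chromPsiPos} — and to observe that passing from the chromatic quasisymmetric function to its $k$-balanced analogue merely discards some of the summands.

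First I would recall that a proper coloring $\coloring$ of $G$ induces the orientation $\theta(\coloring)$ obtained by directing each edge towards its endpoint of larger color, and that $\theta(\coloring)$ is necessarily acyclic, since a directed cycle $v_1\to\dotsm\to v_m\to v_1$ would force $\coloring(v_1)<\dotsm<\coloring(v_m)<\coloring(v_1)$. Next, for a fixed orientation $\theta\in O(G)$ I would identify the set of proper colorings $\coloring$ with $\theta(\coloring)=\theta$. Because every edge of $G$ is oriented by $\theta$, the condition $\theta(\coloring)=\theta$ is equivalent to requiring $\coloring(i)<\coloring(j)$ for every directed edge $i\to j$ of $\theta$; by transitivity this is in turn equivalent to $\coloring$ being a strictly order-preserving map from the poset $P(\theta)$ to $\setP$, that is, a reverse $(P(\theta),w)$-partition for any order-reversing labeling $w=w(\theta)$ (recall from \cref{sec:revPP} that when $w$ is order-reversing the reverse $(P,w)$-partitions are exactly the strict order-preserving maps, and that this set does not depend on the choice of $w$). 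Hence, for $\theta$ acyclic,
\[
\sum_{\substack{\coloring\colon G\to\setP\\ \coloring\ \text{proper},\ \theta(\coloring)=\theta}}\xvar_{\coloring(1)}\dotsm\xvar_{\coloring(n)}=K_{P(\theta),w}(\xvec),
\]
while the left-hand side is empty when $\theta$ has a directed cycle.

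I would then address the $q$-weight. A directed edge $(i,j)$ of $G$ is an ascent of $\coloring$ precisely when $\coloring(i)<\coloring(j)$, i.e.\ precisely when $\theta(\coloring)$ agrees with the given orientation of $G$ on that edge; thus $\asc(\coloring)=\asc(\theta(\coloring))$ depends only on $\theta(\coloring)$ and is constant on each fibre of the map $\coloring\mapsto\theta(\coloring)$. Multiplying the displayed identity by $q^{\asc(\theta)}$ and summing over all orientations $\theta$ recovers the $k=1$ formula of \cref{thm:chromPsiPos}, since $\theta(\coloring)$ ranges over exactly the acyclic orientations. To obtain the $k$-balanced version it then suffices to restrict the outer sum to $k$-balanced orientations: these are automatically acyclic (a $k$-balanced orientation with $k\geq1$ has no directed cycle), and by definition $\chrom^k_G$ counts exactly those proper colorings whose induced orientation is $k$-balanced, so restricting the coloring sum and the orientation sum to the $k$-balanced case are compatible, yielding the claimed expansion.

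I do not anticipate a genuine obstacle: the entire content is the dictionary between colorings inducing a fixed orientation and reverse $P$-partitions, which is already implicit in the proof of \cref{thm:chromPsiPos} and only needs to be made explicit, combined with the elementary observation that $k$-balancedness implies acyclicity so that replacing ``acyclic'' by ``$k$-balanced'' simply deletes summands. The only points that require a little care are checking that $P(\theta)$ really is a poset (which uses that $\theta$ is acyclic) and that $P(\theta)$ admits an order-reversing natural labeling $w(\theta)$ (take a reversed linear extension), both of which are routine.
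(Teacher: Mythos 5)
Your argument is correct and is exactly the standard fiber decomposition that the paper itself relies on: the paper gives no proof of this proposition (it is quoted from Humpert), but the identity for $k=1$ invoked in the proof of \cref{thm:chromPsiPos} is proved by the same grouping of proper colorings according to the induced acyclic orientation, with the fiber over $\theta$ identified with strict reverse $P(\theta)$-partitions and the weight $q^{\asc(\coloring)}$ constant on fibers. Your additional observations — that $k$-balancedness implies acyclicity so restricting to $k$-balanced orientations merely deletes summands, and that strictness on edges propagates to the transitive closure — are precisely the points needed, so the proposal matches the intended proof.
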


From \cref{prop:Humpert} and \cref{cor:strictOmegaPsiPos} we obtain the following consequence.

\begin{corollary}
Let $G$ be any oriented graph and consider the expansion into quasisymmetric power sums
\[
\omega \chrom^k_G(\xvec;q) = \sum_{\alpha} c^G_{\alpha}(q) \frac{\Psi_\alpha(\xvec)}{z_\alpha}.
\]
Then $c^G_{\alpha}(q) \in \setN[q]$ for all compositions $\alpha$.
\end{corollary}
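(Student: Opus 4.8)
The plan is to apply the automorphism $\omega$ to the orientation expansion of \cref{prop:Humpert} term by term, and then to invoke \cref{cor:strictOmegaPsiPos} on each individual summand, exactly as in the deduction of \cref{thm:chromPsiPos}. First I would recall that, by \cref{prop:Humpert},
\[
\chrom^k_G(\xvec;q) = \sum_{\substack{\theta \in O(G) \\ \theta \text{ is $k$-balanced}}} q^{\asc(\theta)}\, K_{P(\theta),w(\theta)}(\xvec)
\,,
\]
a \emph{finite} sum in which each $w(\theta)$ is an order-reversing labeling of the transitive-closure poset $P(\theta)$. Since $\omega$ is a $\setQ$-linear automorphism of the space of quasisymmetric functions and $q$ is merely a scalar, applying $\omega$ to both sides gives
\[
\omega \chrom^k_G(\xvec;q) = \sum_{\substack{\theta \in O(G) \\ \theta \text{ is $k$-balanced}}} q^{\asc(\theta)}\, \omega K_{P(\theta),w(\theta)}(\xvec)
\,.
\]

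Next, for each $k$-balanced orientation $\theta$, \cref{cor:strictOmegaPsiPos} applies (the labeling $w(\theta)$ is order-reversing), so $\omega K_{P(\theta),w(\theta)}$ is $\Psi$-positive; concretely, combining the lemma $\omega K_{P,w}=K_{P,n+1-w}$ with \cref{thm:KP_positive} one may write
\[
\omega K_{P(\theta),w(\theta)}(\xvec) = \sum_{\alpha} d^{\theta}_{\alpha}\, \frac{\Psi_{\alpha}(\xvec)}{z_{\alpha}}
\,,
\qquad
d^{\theta}_{\alpha} = \big|\ILinExt^\ast_{\alpha}\big(P(\theta),\,n+1-w(\theta)\big)\big| \in \setN
\,.
\]
Because the $\Psi_{\alpha}$, with $\alpha$ ranging over compositions of $n$, form a basis of the degree-$n$ homogeneous quasisymmetric functions, comparing coefficients in the two displays above is legitimate and yields
\[
c^G_{\alpha}(q) = \sum_{\substack{\theta \in O(G) \\ \theta \text{ is $k$-balanced}}} d^{\theta}_{\alpha}\, q^{\asc(\theta)}
\,.
\]
Since each $\asc(\theta)$ is a nonnegative integer (it counts edges), each $d^{\theta}_{\alpha}\in\setN$, and the sum over $k$-balanced orientations is finite, the right-hand side is a polynomial in $q$ with nonnegative integer coefficients, i.e. $c^G_{\alpha}(q)\in\setN[q]$, as claimed.

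I do not expect a genuine obstacle here: the corollary is a formal consequence of \cref{prop:Humpert} and \cref{cor:strictOmegaPsiPos}. The only points requiring any care are the routine bookkeeping that $\omega$ commutes with multiplication by the scalar $q^{\asc(\theta)}$ and with a finite sum, and the observation — immediate from the definition — that $\asc(\theta)$ is a nonnegative integer so that no negative powers of $q$ or negative coefficients can arise; everything else is inherited verbatim from the $\Psi$-positivity results already established for strict reverse $P$-partitions.
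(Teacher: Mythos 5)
Your proposal is correct and follows exactly the route the paper takes: the paper derives this corollary directly from \cref{prop:Humpert} together with \cref{cor:strictOmegaPsiPos}, which is precisely your term-by-term application of $\omega$ to the finite orientation expansion followed by $\Psi$-positivity of each $\omega K_{P(\theta),w(\theta)}$. Your extra bookkeeping (the explicit coefficients $d^{\theta}_{\alpha}=\bigl|\ILinExt^\ast_{\alpha}(P(\theta),n+1-w(\theta))\bigr|$ and the comparison of coefficients in the $\Psi$-basis) is a correct elaboration of what the paper leaves implicit.
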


\subsection{LLT polynomials}

The LLT polynomials were introduced by A.~Lascoux, B.~Leclerc and J.-Y.~Thibon in \cite{Lascoux97ribbontableaux} using \emph{ribbon tableaux}.
The LLT polynomials can be seen as $q$-deformations of products of Schur functions and there are several open problems regarding LLT polynomials.
A different combinatorial model for the LLT 
polynomials was considered in \cite{Haglund2005Macdonald},
where each $k$-tuple of skew shapes index an LLT polynomial.
When each such skew shape is a skew Young diagram with a single box,
we say that the LLT polynomial is \emph{unicellular}.
The unicellular LLT polynomials have a central role in the work 
of E.~Carlsson and A.~Mellit~\cite{CarlssonMellit2017}, in which they introduced a combinatorial model for the unicellular LLT polynomials using Dyck paths.
In \cite{AlexanderssonPanova2016} this Dyck path model was extended to certain directed graphs.

By modifying the definition of the chromatic symmetric functions slightly,
we recover the unicellular LLT polynomials considered in \cite{AlexanderssonPanova2016}:
\begin{definition}
Let $G$ be a directed graph (no loops, but multiple edges are allowed) on the vertex set $[n]$.
The \emph{unicellular graph LLT polynomial} is defined as
\[
\LLT_G(\xvec;q) = \sum_{\substack{\coloring: G \to \setP }} 
\xvar_{\coloring(1)} \dotsm \xvar_{\coloring(n)} q^{\asc(\coloring)}.
\]
Note that we now sum over all colorings.
\end{definition}
The $\LLT_G(\xvec;q)$ are in general only quasisymmetric, 
but for certain choices of $G$ (the same choices as for the chromatic quasisymmetric functions)
they turn out to be symmetric and contain the family of unicellular LLT polynomials,
see \cite{AlexanderssonPanova2016}.

It was observed in \cite{AlexanderssonPanova2016,HaglundWilson2017} 
that $\omega \LLT_G(\xvec;q+1)$ is $p$-positive whenever $\LLT_G(\xvec;q)$
is a unicellular LLT polynomial.
We can now give a proof of the following much stronger statement.
\begin{theorem}\label{thm:lltPsiPos}
Let $G$ be a directed graph and consider the expansion into quasisymmetric power sums
\[
\omega \LLT_G(\xvec;q+1) = \sum_{\alpha} c^G_{\alpha}(q) \frac{\Psi_\alpha(\xvec)}{z_\alpha}.
\]
Then $c^G_{\alpha}(q) \in \setN[q]$ for all compositions $\alpha$.
\end{theorem}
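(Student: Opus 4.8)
The plan is to imitate the proof of \cref{thm:chromPsiPos}: I will rewrite $\LLT_G(\xvec;q+1)$ as an $\setN[q]$-linear combination of generating functions $K_{P,w}$ with $w$ order-reversing, and then invoke \cref{cor:strictOmegaPsiPos}.

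First I would perform the substitution $q\mapsto q+1$ one edge at a time. Since $\asc(\coloring)$ counts the edges $(i,j)$ of $G$ with $\coloring(i)<\coloring(j)$, the factor of $(q+1)^{\asc(\coloring)}$ contributed by an edge $(i,j)$ equals $q+1$ when $\coloring(i)<\coloring(j)$ and equals $1$ otherwise; writing $\mathbf{1}[\coloring(i)<\coloring(j)]$ for the indicator of that event, this factor is $1+q\,\mathbf{1}[\coloring(i)<\coloring(j)]$. Multiplying over all edges and expanding,
\[
(q+1)^{\asc(\coloring)}=\sum_{B\subseteq E(G)}q^{|B|}\prod_{(i,j)\in B}\mathbf{1}[\coloring(i)<\coloring(j)],
\]
so substituting into the definition of $\LLT_G$ and interchanging summations yields
\[
\LLT_G(\xvec;q+1)=\sum_{B\subseteq E(G)}q^{|B|}\sum_{\substack{\coloring:G\to\setP\\ \coloring(i)<\coloring(j)\text{ for all }(i,j)\in B}}\xvar_{\coloring(1)}\dotsm\xvar_{\coloring(n)}.
\]

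Next I would recognise the inner sums. Fix $B\subseteq E(G)$. If the directed graph $([n],B)$ has a directed cycle the inner sum is empty, so the term is $0$. Otherwise let $P_B$ be the poset on $[n]$ given by the transitive closure of $([n],B)$, and pick any order-reversing labeling $w_B$ of $P_B$. By transitivity of $<$, a coloring $\coloring$ obeys ``$\coloring(i)<\coloring(j)$ for all $(i,j)\in B$'' precisely when it obeys ``$\coloring(i)<\coloring(j)$ whenever $i<_{P_B}j$'', i.e.\ precisely when $\coloring$ is a strict reverse $P_B$-partition; hence the inner sum equals $K_{P_B,w_B}(\xvec)$. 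Thus
\[
\LLT_G(\xvec;q+1)=\sum_{\substack{B\subseteq E(G)\\ ([n],B)\text{ acyclic}}}q^{|B|}\,K_{P_B,w_B}(\xvec),
\]
a finite sum in which every $w_B$ is order-reversing.

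Finally, applying $\omega$ term by term and invoking \cref{cor:strictOmegaPsiPos}, each $\omega K_{P_B,w_B}$ is a nonnegative-integer combination of the functions $\Psi_\alpha(\xvec)/z_\alpha$; multiplying by $q^{|B|}$ and summing over the finitely many acyclic edge subsets $B$ leaves all coefficients in $\setN[q]$, which is exactly the claim $c^G_\alpha(q)\in\setN[q]$. The only step that requires any thought is the $q\mapsto q+1$ expansion and the ensuing identification of the coloring sum with a strict reverse $P_B$-partition generating function --- in particular, checking that the edges outside $B$ impose no constraint and that replacing $B$ by its transitive closure does not change the sum; once this is in place the rest is formal. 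As a bonus, using $\omega K_{P,w}=K_{P,\,n+1-w}$ together with \cref{thm:KP_opsurj} turns the displayed expansion into the explicit formula $c^G_\alpha(q)=\sum_{B}q^{|B|}\,|\opsurj_{\alpha}^{\ast}(P_B)|$, the sum being over acyclic subsets $B\subseteq E(G)$.
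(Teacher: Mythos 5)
Your proof is correct and follows essentially the same route as the paper: the paper indexes the decomposition by orientations $\theta$ of the underlying undirected graph with weight $q^{\asc(\theta)}$, but since its poset $P(\theta)$ is the transitive closure of only the forward-oriented edges, this is exactly your sum over edge subsets $B\subseteq E(G)$ with weight $q^{|B|}$, followed by the same appeal to \cref{cor:strictOmegaPsiPos} (and your explicit formula $c^G_\alpha(q)=\sum_B q^{|B|}\,|\opsurj^\ast_\alpha(P_B)|$ is a correct, if unstated in the paper, byproduct of \cref{thm:KP_opsurj}).
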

\begin{proof}
Let $O(G)$ denote the set of orientations of the graph $G$ viewed as an undirected graph.
For $\theta \in O(G)$ we let $\asc(\theta)$ be the number of edges oriented in the same direction as in $G$.
Similar as in the proof of \cref{thm:chromPsiPos}, we have
\[
\LLT_G(\xvec;q+1) = \sum_{\theta \in O(G)} q^{\asc(\theta)}  K_{P(\theta),w}(\xvec)
\]
where $P(\theta)$ is the transitive closure of \emph{only} the edges of $\theta$
oriented in the same manner as in $G$ and $w=w(\theta)$ is order-reversing.
Note that we let $K_{P(\theta),w}(\xvec) \coloneqq 0$ when $P(\theta)$ has a cycle --- this can only happen if $G$
has a directed cycle.

Again the result follows from \cref{cor:strictOmegaPsiPos}.
\end{proof}

\bigskip 

We can enlarge the family of unicellular graph LLT polynomials.
\begin{definition}
Let $G$ be a directed graph on the vertex set $[n]$ and let $S$ be a subset of 
the edges of $G$.
The \emph{vertical strip graph LLT polynomial} is defined as
\[
\LLT_{G,S}(\xvec;q)
=
\sum_{
\substack{\coloring: G \to \setP \\
(i,j) \in S \Rightarrow \coloring(i) < \coloring(j)}
} 
\xvar_{\coloring(1)} \dotsm \xvar_{\coloring(n)} q^{\asc(\coloring)-|S|},
\]
where we sum over all colorings such that $\coloring(i) < \coloring(j)$ whenever $(i, j)$ is a (directed) edge in $S$.
\end{definition}
The name ``vertical strip graph LLT polynomials'' is motivated as follows.
For some choices of $G$ and $S$ we recover the family of LLT polynomials that are,
in the model introduced in \cite{Haglund2005Macdonald},
indexed by $k$-tuples of 
vertical strips.
Vertical strip LLT polynomials occur naturally in the study of 
the delta operator and diagonal harmonics.
The family of vertical strip LLT polynomials contains (a version of) modified Hall--Littlewood polynomials.
See \cite{AlexanderssonPanova2016} for an explicit construction of the correspondence between the above model
and the model in \cite{Haglund2005Macdonald}.

\begin{theorem}\label{thm:lltVertPsiPos}
Let $G$ be a directed graph, $S$ a subset of the edges of $G$, and let
\begin{align}\label{eq:vertlltPsiExp}
\omega \LLT_{G,S}(\xvec;q+1) = \sum_{\alpha} c^{G,S}_{\alpha}(q) \frac{\Psi_\alpha(\xvec)}{z_\alpha} 
\end{align}
be the expansion into quasisymmetric power sums.
Then $c^G_{\alpha}(q)\in\setN[q]$ for all compositions $\alpha$.
\end{theorem}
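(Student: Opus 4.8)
The plan is to follow the proof of \cref{thm:lltPsiPos} almost verbatim; the only new ingredient is the bookkeeping forced by the edge set $S$. First I would expand the shifted $q$-power by the binomial identity $(q+1)^{m}=\sum_{T}q^{|T|}$, the sum over subsets $T$ of an $m$-element set, applied with $m=\asc(\coloring)-|S|$. In the sum defining $\LLT_{G,S}$ every edge of $S$ is forced to be an ascent of $\coloring$, so $\asc(\coloring)\geq|S|$ and $m$ is exactly the number of ascending edges of $\coloring$ that do not lie in $S$. Hence
\[
\LLT_{G,S}(\xvec;q+1)=\sum_{\coloring}\ \sum_{T}\ q^{|T|}\ \xvar_{\coloring(1)}\dotsm\xvar_{\coloring(n)},
\]
where $\coloring$ ranges over colorings of $G$ with $\coloring(i)<\coloring(j)$ for all $(i,j)\in S$, and $T$ ranges over all sets of ascending edges of $\coloring$ that are disjoint from $S$.

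Next I would reindex this double sum by orientation, exactly as in the proof of \cref{thm:lltPsiPos}. To a pair $(\coloring,T)$ I associate the orientation $\theta$ of the underlying undirected graph of $G$ that keeps the $G$-orientation on the edges of $S\cup T$ and reverses it on every other edge; in particular $\theta$ orients every edge of $S$ as in $G$. The orientation $\theta$ recovers $T$ as the set of edges oriented as in $G$ minus $S$, and for fixed such $\theta$ the admissible colorings are precisely those satisfying $\coloring(i)<\coloring(j)$ for \emph{every} edge $(i,j)$ that $\theta$ orients as in $G$ — the $S$-constraint together with the requirement that $T$ consist of ascents disjoint from $S$ collapses to this single condition. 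Writing $P(\theta)$ for the transitive closure of the edges oriented as in $G$, and setting $K_{P(\theta),w}\coloneqq 0$ when $P(\theta)$ has a directed cycle, these colorings are exactly the strict reverse $P(\theta)$-partitions. Choosing $w=w(\theta)$ order-reversing and applying \cref{lem:fExpOfKP} yields
\[
\LLT_{G,S}(\xvec;q+1)=\sum_{\theta}q^{a(\theta)}\,K_{P(\theta),w}(\xvec),
\]
where $\theta$ runs over all orientations of $G$ that agree with $G$ on $S$ and $a(\theta)$ denotes the number of non-$S$ edges of $G$ that $\theta$ orients as in $G$.

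Finally I would apply $\omega$ to each summand and invoke \cref{cor:strictOmegaPsiPos}: since every $w(\theta)$ is order-reversing, each $\omega K_{P(\theta),w}(\xvec)$ is $\Psi$-positive, so $\omega\LLT_{G,S}(\xvec;q+1)$ is an $\setN[q]$-linear combination of $\Psi$-positive quasisymmetric functions. Comparing with \eqref{eq:vertlltPsiExp} gives $c^{G,S}_{\alpha}(q)\in\setN[q]$ for all $\alpha$.

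I expect the main obstacle to be nothing more than careful notational bookkeeping: one must check that the correspondence $(\coloring,T)\leftrightarrow(\theta,\coloring)$ treats the edges of $S$ correctly — they contribute neither an element to $T$ nor a factor of $q$, and they are always oriented as in $G$ — and that the resulting constraint on the colorings is a \emph{strict} inequality along every agreeing edge, so that one genuinely lands among strict reverse $P$-partitions and not ordinary ones. Once this is pinned down, the theorem reduces to the already-established \cref{cor:strictOmegaPsiPos} precisely as \cref{thm:lltPsiPos} did.
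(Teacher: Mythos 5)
Your proof is correct and follows essentially the same route as the paper: the paper's proof of \cref{thm:lltVertPsiPos} simply states that the same technique as in \cref{thm:lltPsiPos} gives $\LLT_{G,S}(\xvec;q+1)=\sum_{\theta\in O_S(G)}q^{\asc(\theta)-|S|}K_{P(\theta),w}(\xvec)$ with $O_S(G)$ the orientations agreeing with $G$ on $S$ and $w$ order-reversing, and then invokes \cref{cor:strictOmegaPsiPos}. You have merely written out explicitly the binomial expansion and the reindexing of pairs $(\coloring,T)$ by orientations that the paper leaves implicit, and your bookkeeping of the edges in $S$ (no $q$-factor, always oriented as in $G$, strict inequalities along edges oriented as in $G$) matches the intended argument.
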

\begin{proof}
The same technique as above (also in \cite{AlexanderssonPanova2016}) shows that 
\[
\LLT_{G,S}(\xvec;q+1) = \sum_{\theta \in O_S(G)} q^{\asc(\theta)-|S|}  K_{P(\theta),w}(\xvec)
\]
where $O_S(G)$ is now the subset of orientations of $G$ such that edges in $S$ are oriented as in $G$.
\end{proof}
A special case of \cref{thm:lltVertPsiPos} was proved in \cite{AlexanderssonPanova2016}.

It is conjectured by P. Alexandersson and G. Panova in \cite{AlexanderssonPanova2016} that 
the coefficients $c^{G}_{\alpha}(q)$ in \cref{thm:lltPsiPos} are unimodal whenever $G$ is a unit interval graph.
Computer experiments suggests that this conjecture extends to the more general setting in \cref{thm:lltPsiPos}.

\begin{conjecture}\label[conjecture]{conj:unimodal}
Let $G$ be an oriented graph (no loops or multiple edges).
Then the coefficients $c_{\alpha}^G(q)\in\setN[q]$ in the expansion
\begin{equation*}
\omega\LLT_G(\xvec;q+1)
=
\sum_{\alpha}c_{\alpha}^G(q)\frac{\Psi_{\alpha}(\xvec)}{z_{\alpha}}
\end{equation*}
are unimodal for all compositions $\alpha$.
\end{conjecture}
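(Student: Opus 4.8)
The plan is to first translate the conjecture into a purely combinatorial statement using the machinery already in place. Running the proof of \cref{thm:lltPsiPos}, applying $\omega$, using that $\omega K_{P,w}=K_{P,n+1-w}$ with $n+1-w$ a natural labeling, and invoking \cref{thm:KP_opsurj} (which shows $K_P$ depends only on $P$), one obtains
\begin{equation*}
c_{\alpha}^{G}(q)=\sum_{\theta\in O(G)}q^{\asc(\theta)}\,\bigl|\opsurj_{\alpha}^{\ast}(P(\theta))\bigr|,
\end{equation*}
where $O(G)$ is the set of orientations of the underlying undirected graph of $G$, $\asc(\theta)$ is the number of edges of $\theta$ oriented as in $G$, and $P(\theta)$ is the transitive closure of those edges. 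Thus the conjecture is equivalent to the assertion that this orientation-generating polynomial, weighted by the number of order-preserving surjections of type $\alpha$ onto a chain with singly-rooted fibres, is unimodal in $q$.

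The approach I would take is to realize unimodality by an explicit order-raising injection on this model. Concretely, for each $\alpha$ and each $k$ below the centre of the polynomial I would construct an injection
\begin{equation*}
\Phi\colon\bigsqcup_{\substack{\theta\in O(G)\\ \asc(\theta)=k}}\opsurj_{\alpha}^{\ast}(P(\theta))\;\hookrightarrow\;\bigsqcup_{\substack{\theta\in O(G)\\ \asc(\theta)=k+1}}\opsurj_{\alpha}^{\ast}(P(\theta)),
\end{equation*}
together with a matching injection going downward above the centre. The natural first guess for $\Phi$ flips one carefully chosen backward edge of $\theta$ to a forward edge; since the orientation hypercube $O(G)$ carries a symmetric chain decomposition, the concrete step would be to choose these flips so that along each symmetric chain the weights $|\opsurj_{\alpha}^{\ast}(P(\theta))|$ form a centrally symmetric unimodal sequence, which would yield the claim at once. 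A fallback, representation-theoretic route for the unit-interval case is to use that $\LLT_G(\xvec;q)$ is built from the graded cohomology of a Hessenberg variety, where hard Lefschetz supplies an $\mathfrak{sl}_2$-action, and to look for a filtration whose associated graded pieces have Hilbert series given by the individual $\Psi$-coefficients of $\omega\LLT_G(\xvec;q+1)$.

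The step I expect to be the main obstacle is exactly the construction of $\Phi$, or equivalently the control of the weights $|\opsurj_{\alpha}^{\ast}(P(\theta))|$ under edge flips; this is also why the unit-interval instances in \cref{sec:chromatic} and in \cite{AlexanderssonPanova2016} remain open. Flipping a single edge changes the transitive closure $P(\theta)$ in an essentially uncontrolled way, so there is no evident recipe carrying an element of $\opsurj_{\alpha}^{\ast}(P(\theta))$ to one of $\opsurj_{\alpha}^{\ast}(P(\theta'))$; in particular $|\opsurj_{\alpha}^{\ast}(P(\theta))|$ is \emph{not} monotone under adding order relations, since imposing a relation can turn a fibre with two minimal elements into a singly-rooted one. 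On the geometric side the obstruction is a basis mismatch: hard Lefschetz gives palindromic unimodality of the coefficients in the basis of Schur functions, but passing through $\omega$ and the Murnaghan--Nakayama rule introduces signs in the power-sum, hence the $\Psi$, basis, so unimodality is not automatically inherited. I therefore expect the crux to be a genuinely new combinatorial construction of $\Phi$, already needed in the symmetric unit-interval $q$-graded specialization---which would resolve \cite[Conj.~7.6]{ShareshianWachs2016} and the Alexandersson--Panova conjecture---after which the passage to arbitrary oriented $G$ via the displayed identity should be comparatively routine.
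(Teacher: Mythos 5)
The statement you are addressing is \cref{conj:unimodal}, which is an open conjecture in the paper: the authors offer no proof, only computational verification for all oriented graphs on at most six vertices, together with the cautionary example showing that the analogous vertical-strip coefficients $c^{G,S}_{\alpha}(q)$ are \emph{not} unimodal in general. Your proposal does not prove it either, and you say so yourself. The part of your write-up that is solid is the reduction: combining the orientation expansion from the proof of \cref{thm:lltPsiPos} with $\omega K_{P(\theta),w}=K_{P(\theta)}$ (for $w$ order-reversing) and \cref{thm:KP_opsurj} does give $c_{\alpha}^{G}(q)=\sum_{\theta\in O(G)}q^{\asc(\theta)}\,|\opsurj_{\alpha}^{\ast}(P(\theta))|$, which is exactly the combinatorial model the paper's own results provide. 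But from that point on you only describe strategies and explicitly concede that the key step --- the injection $\Phi$, equivalently any control of $|\opsurj_{\alpha}^{\ast}(P(\theta))|$ under single edge flips --- is missing. A plan whose central construction is acknowledged to be unknown is a gap, not a proof; nothing in the paper can be used to fill it, since the paper records the statement precisely as a conjecture.

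Two further cautions about the proposed strategies. The symmetric-chain-decomposition refinement cannot work as stated: all chains in an SCD of the orientation hypercube are centered at $|E(G)|/2$, so demanding centrally symmetric unimodal weights along each chain would force $c_{\alpha}^{G}(q)$ to be palindromic about $|E(G)|/2$, and it is not --- already for $G$ a single directed edge one gets $c_{(1,1)}^{G}(q)=q+2$ and $c_{(2)}^{G}(q)=q$. So at best one can hope for one-directional injections relative to the (composition-dependent) mode, which is a genuinely harder, less structured statement. Similarly, the Hessenberg/hard Lefschetz route faces not only the Schur-versus-$\Psi$ basis mismatch you mention, but also the fact that the conjecture concerns the shifted variable $q+1$, which mixes cohomological degrees, and that the conjecture is asserted for \emph{all} oriented graphs, far beyond the unit-interval case where the geometry is available. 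In short: your translation of the problem is correct and consistent with the paper, but the unimodality itself remains unproved, exactly as in the paper.
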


\cref{conj:unimodal} has been verified for all oriented graphs with six or fewer vertices.\footnote{
See \cite[\oeis{A001174}]{OEIS} for the number of such graphs.}
In contrast, we note that the coefficients $c^{G,S}_{\alpha}(q)$ in \eqref{eq:vertlltPsiExp}
are \emph{not} unimodal in general. For example,
\[
G = \{1\to 2, 1\to 3, 2 \to 4, 2\to 5\}, \qquad S = \{1\to 2, 1\to 3\}
\]
gives
\[
\LLT_{G,S}(\xvec;q+1) = \dotsb + (1+q^2)\frac{\Psi_{12}}{z_{12}} + \dotsb.
\]

\bigskip 
It is possible to refine Theorems~\ref{thm:chromPsiPos}, \ref{thm:lltPsiPos} and~\ref{thm:lltVertPsiPos} by
assigning a different $q$-weight to each edge of $G$, so that for a coloring $\coloring$
we let
\[
\qvec^{\asc(\coloring)}
\coloneqq
\prod_{\substack{(i,j)\in E(G)\\
\coloring(i)<\coloring(j)}}
q_{i,j}
\,.
\]
The resulting functions are again quasisymmetric and the 
analogues of the above theorems can be proved in the same manner. 
We leave out the details.

\subsection{Tutte quasisymmetric functions}
\label[section]{sec:tutte}

In \cite[Definition 3.1]{Stanley98Chromatic} R.~Stanley defines the \emph{multivariate Tutte polynomial}
of a graph $G$ with vertex set $[n]$ as
\[
\mathrm{Tutte}_G(\xvec;q) \coloneqq \sum_{\coloring:G \to \setP} \xvar_{\coloring(1)} \dotsm \xvar_{\coloring(n)} (1+q)^{m(\coloring)}
\,,
\]
where the sum ranges over all vertex colorings of $G$ and $m(\coloring)$ denotes 
the number of monochromatic edges --- edges $\{i,j\}$ such that $\coloring(i)=\coloring(j)$.
It is evident that this is a symmetric function and it is straightforward to prove (see \cite{Stanley98Chromatic}) that
\begin{equation}\label{eq:Tutte}
\mathrm{Tutte}_G(\xvec;q) = \sum_{S\subseteq E(G)} q^{|S|}\psumP_{\lambda(S)}(\xvec)
\end{equation}
where the sum ranges over all subsets of the edges of $G$, and $\lambda(S)$ is the partition whose parts are the sizes of the connected components of the subgraph of $G$ spanned by the edges in $S$.
\medskip

J.~Awan and O.~Bernardi \cite{AwanBernardi2016} define a quasisymmetric generalization of the Tutte polynomial,
which they call the $B$-polynomial.
\begin{definition}*
Let $G$ be a directed graph on the vertices $[n]$. Let 
\begin{align}
B_{G}(\xvec;y,z) \coloneqq \sum_{\coloring : G \to \setP} \xvar_{\coloring(1)} \dotsm \xvar_{\coloring(n)} y^{\asc(\coloring)} z^{\inv(\coloring)}
\end{align}
where $\asc(\coloring)$ and $\inv(\coloring)$ are defined as
\begin{align*}
\asc(\coloring) = |\{ (i,j) : \coloring(i) < \coloring(j) \}|
&&\text{and}&& 
\inv(\coloring) = |\{ (i,j) : \coloring(i) > \coloring(j) \}|.
\qeddefhere
\end{align*}
\end{definition}
Notice that the chromatic quasisymmetric function can 
be obtained as $\chrom_G(\xvec;q) = [z^{n}] B_{G}(\xvec;qz,z)$,
and that the unicellular graph LLT polynomials can be obtained as  
$\LLT_G(\xvec;q) = B_{G}(\xvec;q,0)$.
Furthermore, for any directed graph $G$, the Tutte polynomials satisfies the relationship
\[
y^{|E(G)|}\mathrm{Tutte}_{\underline{G}}(\xvec;\tfrac{1}{y}-1) = B_{G}(\xvec;y,y).
\]
where $\underline{G}$ denotes the undirected version of $G$.

\begin{theorem}
Let $G$ be a directed graph and consider the expansion
\[
\omega B_{G}(\xvec;y+1,z+1) = \sum_{\alpha} c^G_{\alpha}(y,z) \frac{\Psi_\alpha(\xvec)}{z_\alpha}.
\]
Then $c^G_{\alpha}(y,z) \in \setN[y,z]$ for all compositions $\alpha$.
\end{theorem}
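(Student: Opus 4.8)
The plan is to follow the template of the proofs of \cref{thm:chromPsiPos}, \cref{thm:lltPsiPos} and \cref{thm:lltVertPsiPos}: write $B_{G}(\xvec;y+1,z+1)$ as an $\setN[y,z]$-linear combination of generating functions $K_{P,w}(\xvec)$ with $w$ order-reversing, and then invoke \cref{cor:strictOmegaPsiPos}, which guarantees that each $\omega K_{P,w}$ is $\Psi$-positive. Since nonnegative combinations of $\Psi$-positive functions with coefficients in $\setN[y,z]$ are again of the desired form, this yields $c_{\alpha}^{G}(y,z)\in\setN[y,z]$.

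The first step is to expand the substitution $y\mapsto y+1$, $z\mapsto z+1$ directly at the level of colorings. For a coloring $\coloring\colon G\to\setP$ let $E^{+}(\coloring)$ denote the set of directed edges $(i,j)$ of $G$ with $\coloring(i)<\coloring(j)$, and $E^{-}(\coloring)$ the set with $\coloring(i)>\coloring(j)$; these two sets are disjoint and have sizes $\asc(\coloring)$ and $\inv(\coloring)$ respectively. Then $(y+1)^{\asc(\coloring)}=\sum_{S\subseteq E^{+}(\coloring)}y^{|S|}$ and $(z+1)^{\inv(\coloring)}=\sum_{T\subseteq E^{-}(\coloring)}z^{|T|}$, so
\[
B_{G}(\xvec;y+1,z+1)=\sum_{\coloring\colon G\to\setP}\xvar_{\coloring(1)}\dotsm\xvar_{\coloring(n)}\sum_{\substack{S\subseteq E^{+}(\coloring)\\ T\subseteq E^{-}(\coloring)}}y^{|S|}z^{|T|}.
\]
Next I would interchange the order of summation and group the terms according to the pair $(S,T)$, which automatically ranges over disjoint pairs of edge subsets. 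To such a pair associate the directed graph $D(S,T)$ on vertex set $[n]$ obtained from $G$ by keeping the edges of $S$ with their original orientation, reversing the edges of $T$, and deleting all remaining edges. For fixed $(S,T)$, the colorings $\coloring$ contributing to that term are exactly those with $\coloring(i)<\coloring(j)$ for every directed edge $(i,j)$ of $D(S,T)$, with no condition imposed on the edges of $G$ outside $S\cup T$.

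The third step is to recognise this inner sum as a strict reverse $P$-partition generating function. If $D(S,T)$ is acyclic, let $P(S,T)$ be the transitive closure of $D(S,T)$ and let $w=w(S,T)$ be an order-reversing labeling; then the colorings described above are precisely the strict reverse $P(S,T)$-partitions, whence $\sum_{\coloring}\xvar_{\coloring(1)}\dotsm\xvar_{\coloring(n)}=K_{P(S,T),w}(\xvec)$. If $D(S,T)$ contains a directed cycle there are no such colorings, and we set $K_{P(S,T),w}(\xvec)\coloneqq0$, exactly as in the proof of \cref{thm:lltPsiPos}. This gives
\[
B_{G}(\xvec;y+1,z+1)=\sum_{\substack{S,T\subseteq E(G)\\ S\cap T=\emptyset}}y^{|S|}z^{|T|}\,K_{P(S,T),w}(\xvec),
\]
and applying $\omega$ together with \cref{cor:strictOmegaPsiPos} completes the proof.

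I do not expect a serious obstacle: the argument is essentially bookkeeping, parallel to \cref{thm:lltPsiPos} but with three choices per edge (oriented as in $G$, weight $y$; oriented oppositely, weight $z$; deleted, weight $1$) instead of two. The one point requiring a little care is to verify that, for fixed $(S,T)$, the single constraint ``$\coloring$ increases strictly along $D(S,T)$'' really does capture all colorings with $S\subseteq E^{+}(\coloring)$ and $T\subseteq E^{-}(\coloring)$ and introduces no spurious conditions on the other edges; one should also note that $D(S,T)$ may contain a directed cycle even when $G$ is acyclic as a directed graph (e.g.\ when some edges of an undirected cycle of $G$ get reversed), which is harmless under the $K\coloneqq0$ convention.
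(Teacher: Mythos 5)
Your proposal is correct and is essentially the paper's own proof: the paper likewise writes $B_G(\xvec;y+1,z+1)=\sum_{A,I\subseteq E(G),\,A\cap I=\emptyset}y^{|A|}z^{|I|}K_{P(A,I),w}(\xvec)$, where $P(A,I)$ is the transitive closure of $A$ together with the reversals of $I$, $w$ is order-reversing, $K:=0$ when a directed cycle arises, and then applies \cref{cor:strictOmegaPsiPos}. Your pairs $(S,T)$ are exactly the paper's $(A,I)$; you merely spell out the coloring-level bookkeeping that the paper leaves implicit.
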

\begin{proof}
Let $E(G)$ be the set of directed edges of $G$. 
Then we have that
\[
B_{G}(\xvec;y+1,z+1) = \sum_{\substack{ A, I \subseteq E(G) \\ A \cap I = \emptyset }} y^{|A|} z^{|I|} K_{P(A,I),w}(\xvec)
\]
where $P(A,I)$ is the transitive closure of the directed edges 
\begin{align}\label{eq:qsymtutteEdges}
A\cup \{(j,i) : (i,j) \in I \}
\end{align}
and $w$ is an order-reversing labeling of $P(A,I)$. Here we let $K_{P(A,I),w}\coloneqq0$ if 
some edges in \eqref{eq:qsymtutteEdges} form a cycle.
By \cref{cor:strictOmegaPsiPos}, the statement follows.
\end{proof}

\subsection{Matroid quasisymmetric functions}
\label[section]{sec:matroid}

In 2009 L.~Billera, N.~Jia and V.~Reiner introduced a quasisymmetric 
function associated to matroids as a new matroid invariant, see \cite{BilleraJiaReiner2009}.
The definition is as follows:
\begin{definition}*
Let $M$ be a matroid with ground set $E$ and bases $\matBasisB(M)$. A map $f:E \to \setP$
is said to be $M$-\emph{generic} if the sum $f(B) \coloneqq \sum_{e \in B} f(e)$
is minimized by a unique $B \in \matBasisB(M)$.
An $M$-generic function $f$ must also be injective.

The \emph{matroid quasisymmetric function} is then defined as
\begin{align}
F(M,\xvec) \coloneqq \sum_{\text{$f$ $M$-generic}} \prod_{e\in E} \xvar_{f(e)}.
\qeddefhere
\end{align}
\end{definition}

In \cite{BilleraJiaReiner2009} it is proved that $F(M,\xvec)$ is indeed a quasisymmetric function.

\medskip

Let $M = (E,\matBasisB(M))$ be a matroid. 
Given a basis $B \in \matBasisB(M)$ let $B^* \coloneqq E\setminus B$ and
define the poset $P_B$ on the vertex set $E = B \sqcup B^*$ such that
\[
e \prec e' \text{ if and only if } e \in B \text{ and } (B\setminus\{e\}) \cup \{e'\} \text{ is in } \matBasisB(M).
\]

\begin{theorem}[{\cite[Thm. 5.2]{BilleraJiaReiner2009}}]
\label[theorem]{thm:matroid}
Let $M = (E,\matBasisB(M))$ be a matroid. Then 
\[
F(M,\xvec) = \sum_{B \in \matBasisB(M)} K_{P_B,w}(\xvec)
\]
where $w$ is any order-reversing labeling of $P_B$.
\end{theorem}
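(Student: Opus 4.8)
The statement is \cite[Thm.~5.2]{BilleraJiaReiner2009}; the plan is to give a short self-contained proof by exhibiting a weight-preserving bijection between $M$-generic functions $f\colon E\to\setP$ and the disjoint union $\bigsqcup_{B\in\matBasisB(M)}\rpp(P_B,w)$, where for each basis $B$ we fix an order-reversing labeling $w=w(B)$ of $P_B$. Granting such a bijection, summing $\prod_{e\in E}\xvar_{f(e)}$ over both sides gives $F(M,\xvec)=\sum_{B}K_{P_B,w}(\xvec)$ at once. The bijection to aim for sends an $M$-generic $f$ to the pair $(B(f),f)$, where $B(f)$ is the unique basis minimizing $f(B)\coloneqq\sum_{e\in B}f(e)$ --- well defined exactly because $f$ is $M$-generic.

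First I would unwind membership in $\rpp(P_B,w)$. Since $w$ is order-reversing, every strict relation $e<_{P_B}e'$ forces $w(e)>w(e')$, so conditions (i) and (ii) defining a reverse $(P,w)$-partition collapse to the single requirement $f(e)<f(e')$ for all $e<_{P_B}e'$. Every cover relation of $P_B$ runs from an element of $B$ up to an element of $B^{\ast}$, and $P_B$ has no composable pair of covers, so its strict order coincides with its covering relation; hence $f\in\rpp(P_B,w)$ if and only if $f(e)<f(e')$ for all $e\in B$ and $e'\in B^{\ast}$ with $(B\setminus\{e\})\cup\{e'\}\in\matBasisB(M)$.

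I would then verify the two inclusions by short exchange arguments. For the forward direction, let $f$ be $M$-generic with minimizer $B=B(f)$, and take $e\in B$, $e'\in B^{\ast}$ with $B'\coloneqq(B\setminus\{e\})\cup\{e'\}\in\matBasisB(M)$. As $B'\neq B$, uniqueness of the minimizer gives $f(B')>f(B)$; since $f(B')=f(B)-f(e)+f(e')$ this is exactly $f(e)<f(e')$, so $f\in\rpp(P_B,w)$. For the reverse direction, let $B$ be a basis, $f\in\rpp(P_B,w)$, and let $B_0$ be a basis minimizing $f$. If $B_0\neq B$, choose $e\in B\setminus B_0$; by the symmetric (Brualdi) basis-exchange property there is $e'\in B_0\setminus B$ with both $(B\setminus\{e\})\cup\{e'\}$ and $(B_0\setminus\{e'\})\cup\{e\}$ in $\matBasisB(M)$. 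The first swap lies over $P_B$, so the description above gives $f(e)<f(e')$, whence $f\big((B_0\setminus\{e'\})\cup\{e\}\big)=f(B_0)-f(e')+f(e)<f(B_0)$, contradicting minimality. So $B_0=B$, the minimizer is unique, and $f$ is $M$-generic with $B(f)=B$. Together these two facts show that $f\mapsto(B(f),f)$ is, for each $B$, a bijection between the $M$-generic functions with minimizing basis $B$ and $\rpp(P_B,w)$; since the minimizer is unique, these fibers are disjoint and cover all $M$-generic functions, which is the desired bijection.

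The main obstacle is the reverse direction, and more precisely the invocation of the symmetric basis-exchange property: this is the tool that converts the purely local data recorded by $P_B$ (one-sided exchanges out of $B$) into the global assertion that $B$ is the unique lightest basis, and some care is needed so that the exchange produces a \emph{strictly} lighter basis rather than merely a non-heavier one. The remaining ingredients --- the collapse of the two reverse-partition conditions for an order-reversing $w$ and the monomial bookkeeping --- are routine.
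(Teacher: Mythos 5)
Your proof is correct. Note that the paper does not prove this statement at all --- it is quoted as \cite[Thm.~5.2]{BilleraJiaReiner2009} --- so there is no internal proof to compare against; your argument is essentially the standard one from that reference: partition the $M$-generic functions by their unique minimizing basis $B$, observe that for an order-reversing $w$ membership in $\rpp(P_B,w)$ means exactly $f(e)<f(e')$ for every feasible exchange pair, get the forward inclusion from uniqueness of the minimizer, and get the converse (uniqueness of $B$ as minimizer) from Brualdi's symmetric basis-exchange property, which you apply correctly to produce a strictly lighter basis from any putative minimizer $B_0\neq B$.
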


Using \cref{cor:strictOmegaPsiPos} we get the following corollary.

\begin{corollary}
Let $M$ be a matroid.
Then $\omega F(M,\xvec)$ is $\Psi$-positive.
\end{corollary}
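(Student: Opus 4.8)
The plan is to combine the combinatorial expansion of the matroid quasisymmetric function established by Billera, Jia and Reiner with the $\Psi$-positivity of strict reverse $P$-partition generating functions proved in \cref{sec:revPP}. By \cref{thm:matroid} we already have
\[
F(M,\xvec) = \sum_{B \in \matBasisB(M)} K_{P_B,w}(\xvec),
\]
where for each basis $B$ the labeling $w$ may be chosen order-reversing on the poset $P_B$.

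First I would apply the automorphism $\omega$ to both sides of this identity. Since $\omega$ is a linear operator on quasisymmetric functions, this yields
\[
\omega F(M,\xvec) = \sum_{B \in \matBasisB(M)} \omega K_{P_B,w}(\xvec).
\]
Next, for each basis $B$, because $w$ is order-reversing on $P_B$, \cref{cor:strictOmegaPsiPos} guarantees that $\omega K_{P_B,w}$ expands with nonnegative coefficients into the quasisymmetric power sums $\Psi_\alpha$. A finite sum of $\Psi$-positive quasisymmetric functions is again $\Psi$-positive, so $\omega F(M,\xvec)$ is $\Psi$-positive, as claimed.

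There is essentially no obstacle here: all the real content is already packaged in \cref{thm:matroid} and \cref{cor:strictOmegaPsiPos}, both of which we may invoke freely. The only point worth checking is that the labeling $w$ appearing in \cref{thm:matroid} is permitted to be any order-reversing labeling, which is precisely the hypothesis of \cref{cor:strictOmegaPsiPos}; the two statements therefore dovetail with no further manipulation. If one additionally wanted an explicit combinatorial description of the coefficients, one could transport the bijection of \cref{prop:opsurj} through the formula for $\omega \Psi_\alpha$; in the special case of uniform matroids the relevant posets $P_B$ are the complete bipartite posets treated in \cref{ex:complete_bipartite_graph}, which makes that computation fully explicit.
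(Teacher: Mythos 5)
Your proposal is correct and matches the paper's argument exactly: the paper likewise applies $\omega$ to the decomposition of \cref{thm:matroid} and invokes \cref{cor:strictOmegaPsiPos} termwise. No issues.
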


We compute the $\Psi$-expansion of the matroid quasisymmetric function $F(M,\xvec)$ 
explicitly in the case where $M$ is the uniform matroid.
\begin{example}*[Uniform matroid]
The uniform matroid $U=U_n^r$ has ground set $E=[n]$ and every $r$-element subset of $E$ constitutes a basis.
That is, $\matBasisB(U)=\binom{[n]}{r}$.
In this case the poset $P_B$ is given by $e\prec e'$ for all $e\in B$ and $e'\in B^*$.
In particular all posets $P_B$ for $B\in\matBasisB(U)$ are isomorphic.
The Hasse diagram of $P_B$ is the complete bipartite graph $K_{r,m}$, where $m=n-r$.

Fix a basis $B\in\matBasisB(U)$ and a natural labeling $w$ of $P_B$.
It follows from \cref{thm:matroid}, \cref{thm:KP_opsurj} and \cref{ex:complete_bipartite_graph} that
\begin{align*}
\omega F(U,\xvec)
&=\abs{\matBasisB(U)}\sum_{\alpha\vDash n}\frac{\Psi_{\alpha}(\xvec)}{z_{\alpha}}\abs{\opsurj_{\alpha}^{\ast}({P_B})}
\\
&=\binom{n}{r}\sum_{k=0}^m
\frac{\Psi_{(1^{r-1},k+1,1^{m-k})}(\xvec)}{(k+1)(n-k-1)!}
\cdot\frac{r!m!}{k!}
\\
&=\sum_{k=0}^m\binom{n}{k+1}\Psi_{(1^{r-1},k+1,1^{m-k})}(\xvec)
\,.\qeddefhere
\end{align*}
\end{example}

\subsection{Eulerian quasisymmetric functions}
\label[section]{sec:eulerian}

The aim of this section is to explain how the tools developed in this paper can be used to prove known $p$-expansions of the Eulerian quasisymmetric functions and the cycle Eulerian quasisymmetric functions of J.~Shareshian and M.~Wachs.

The \emph{Eulerian polynomials} are defined as the descent generating functions of the symmetric group
\[
A_n(q)
\coloneqq\sum_{\sigma\in\symS_n}q^{\des(\sigma)}
\,,
\]
where $\des(\sigma)\coloneqq\abs{\DES(\sigma)}$.
By convention $A_0(q)\coloneqq1$.

In \cite{ShareshianWachs2010} J.~Shareshian and M.~Wachs generalize the classical identity for the exponential generating function of Eulerian polynomials
\begin{equation}\label{eq:euler}
\sum_{n\geq 0}A_n(q)\frac{z^n}{n!}
=\frac{1-q}{\exp(z(1-q))-q}
\,,
\end{equation}
and introduced Eulerian quasisymmetric functions.

Let $\sigma\in\symS_n$ be a permutation.
Define the set of \emph{exceedences} of $\sigma$ as
\[
\EXC(\sigma)
\coloneqq\{i\in[n-1]:\sigma_i>i\}
\,,
\]
and set $\exc(\sigma)\coloneqq\abs{\EXC(\sigma)}$.
For $n\in\setN$ let $[\bar{n}]\coloneqq\{\bar{1},\dotsc,\bar{n}\}$ be a disjoint copy of the set $[n]$.
Define a total order on the alphabet $[\bar{n}]\cup[n]$ by
\begin{align}\label{eq:bar_order}
\bar{1}<\dotsm<\bar{n}<1<\dotsm<n
.
\end{align}
Given a permutation $\sigma=\sigma_1\dotsm\sigma_n$, define the word $\bar{\sigma}$ in the alphabet
$[\bar{n}]\cup[n]$ by replacing $\sigma_i$ with $\bar{\sigma_i}$ whenever $i\in\EXC(\sigma)$ is an exceedence.
Let
\[
\DEX(\sigma)\coloneqq\DES(\bar{\sigma}),
\]
where descents of $\bar{\sigma}$ are computed with respect to the order in \eqref{eq:bar_order}.
For example, let $\sigma=613542$.
Then $\EXC(\sigma)=\{1,4\}$, $\bar{\sigma}=\bar{6}13\bar{5}42$ and $\DEX(\sigma)=\{3,5\}$.

The \emph{Eulerian quasisymmetric functions} are defined as
\[
Q_{n,j}(\xvec)\coloneqq
\sum_{\substack{\sigma\in\symS_n \\ \exc(\sigma)=j}}\gessel_{n,\DEX(\sigma)}(\xvec)
\,.
\]
By definition $Q_{n,j}$ is quasisymmetric.
It turns out that $Q_{n,j}$ is in fact symmetric.

In \cite[Thm.~1.2]{ShareshianWachs2010} Eulerian quasisymmetric functions are shown to have the following generating function that specializes to \eqref{eq:euler}.
\begin{equation}\label{eq:euler_H}
\sum_{n,j\geq 1}
Q_{n,j}(\xvec)q^jz^n
=\frac{(1-q)H(\xvec;z)}{H(\xvec;qz)-qH(\xvec;z)}
\end{equation}
Here $H(\xvec;z)\coloneqq\sum_{n\geq0}\completeH_n(\xvec)z^n$ denotes the generating function of complete homogeneous symmetric functions.

From \eqref{eq:euler_H} J.~Shareshian and M.~Wachs deduce the following expansion of Eulerian quasisymmetric functions into power sum symmetric functions.

\begin{proposition}[{\cite[Prop.~6.6]{ShareshianWachs2010}}]
\label[proposition]{prop:Eulerian_p_expansion}
Let $n\in\setN$.
Then
\[
\sum_{j=0}^{n-1}q^jQ_{n,j}(\xvec)
=\sum_{\lambda\vdash n}
\frac{\psumP_{\lambda}(\xvec)}{z_{\lambda}}
A_{\ell(\lambda)}(q)\prod_{i=1}^{\ell(\lambda)}[\lambda_i]_q
\,,
\]
where $[a]_q\coloneqq\frac{1-q^a}{1-q}
$ denotes the usual $q$-integer.
\end{proposition}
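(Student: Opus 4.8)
The plan is to exhibit $\sum_{j} q^j Q_{n,j}$ as a $q$-weighted sum of reverse $P$-partition generating functions over path posets, to apply \cref{thm:KP_opsurj}, to insert the explicit count already carried out in \cref{ex:path}, and finally to collapse quasisymmetric power sums to ordinary power sums via~\eqref{eq:powersum_Psi_expansion}.

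The first, and really the only substantial, step is to prove the identity
\begin{equation*}
\sum_{j=0}^{n-1} q^j Q_{n,j}(\xvec)
=
\sum_{S\subseteq[n-1]} q^{\abs{S}}\, K_{P_S}(\xvec)
\,,
\end{equation*}
where $P_S$ is the path poset on $\{\poseti{1},\dotsc,\poseti{n}\}$ from \cref{ex:path}. One route is to start from the Shareshian--Wachs identification of $\sum_j q^j Q_{n,j}$ with the $\omega$-image of the chromatic quasisymmetric function of the path graph on $[n]$, see~\cite{ShareshianWachs2016}, and then reuse the orientation expansion $\chrom_G(\xvec;q)=\sum_{\theta\in AO(G)} q^{\asc(\theta)} K_{P(\theta),w}$ from the proof of \cref{thm:chromPsiPos} together with the fact that $\omega$ carries an order-reversing labelling to a natural one: the orientations of the path biject with subsets $S\subseteq[n-1]$ of backward edges, with $\asc(\theta)=\abs{S}$ and $\omega K_{P(\theta),w}=K_{P_S}$. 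A more self-contained route is to compare the fundamental expansions $Q_{n,j}=\sum_{\exc(\sigma)=j}\gessel_{n,\DEX(\sigma)}$ and $K_{P_S}=\sum_{\tau\in\ILinExt(P_S,w)}\gessel_{n,\DES(\tau)}$ directly, which amounts to constructing a bijection $\sigma\mapsto(S,\tau)$ sending $(\exc(\sigma),\DEX(\sigma))$ to $(\abs{S},\DES(\tau))$. Getting this statistic-preserving bijection right --- matching exceedances with $\abs{S}$, and the $\DEX$ set with descent sets of linear extensions of $P_S$ --- is the part I expect to require genuine care; everything afterwards is assembly.

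Granting the displayed identity, \cref{thm:KP_opsurj} yields $K_{P_S}(\xvec)=\sum_{\alpha\vDash n}\tfrac{\Psi_\alpha(\xvec)}{z_\alpha}\abs{\opsurj_{\alpha}^{\ast}(P_S)}$. Substituting, swapping the order of summation, and plugging in the evaluation from \cref{ex:path}, namely
\begin{equation*}
\sum_{S\subseteq[n-1]} q^{\abs{S}}\abs{\opsurj_{\alpha}^{\ast}(P_S)}
=
A_{\ell(\alpha)}(q)\prod_{i=1}^{\ell(\alpha)}[\alpha_i]_q
\,,
\end{equation*}
produces the refined expansion
\begin{equation*}
\sum_{j=0}^{n-1} q^j Q_{n,j}(\xvec)
=
\sum_{\alpha\vDash n}\frac{\Psi_\alpha(\xvec)}{z_\alpha}\,A_{\ell(\alpha)}(q)\prod_{i=1}^{\ell(\alpha)}[\alpha_i]_q
\,.
\end{equation*}

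Finally I would descend to power sums. Fixing a partition $\lambda\vdash n$, every composition $\alpha$ whose parts rearrange to $\lambda$ has $z_\alpha=z_\lambda$, $\ell(\alpha)=\ell(\lambda)$ and $\prod_i[\alpha_i]_q=\prod_i[\lambda_i]_q$, so grouping the compositions by their underlying partition and applying $\sum_{\alpha\sim\lambda}\Psi_\alpha=\psumP_\lambda$ from~\eqref{eq:powersum_Psi_expansion} turns the refined expansion into
\begin{equation*}
\sum_{j=0}^{n-1} q^j Q_{n,j}(\xvec)
=
\sum_{\lambda\vdash n}\frac{\psumP_\lambda(\xvec)}{z_\lambda}\,A_{\ell(\lambda)}(q)\prod_{i=1}^{\ell(\lambda)}[\lambda_i]_q
\,,
\end{equation*}
which is the assertion. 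As a built-in consistency check, the palindromicity in $q$ of $A_{\ell(\lambda)}(q)\prod_i[\lambda_i]_q$ (which has degree $n-1$) matches the symmetry $Q_{n,j}=Q_{n,n-1-j}$, and this offers a concrete way to test the bijection used in the first step.
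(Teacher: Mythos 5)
Your proposal is correct and follows essentially the same route as the paper: identify $\sum_{j}q^jQ_{n,j}$ with $\sum_{S\subseteq[n-1]}q^{\abs{S}}K_{P_S}$, then apply \cref{thm:KP_opsurj}, the path computation of \cref{ex:path}, and \eqref{eq:powersum_Psi_expansion}. The only difference is the first step, where the paper invokes the Shareshian--Wachs banner model \eqref{eq:banners}, under which a banner with barred positions $S$ is literally a reverse $P_S$-partition, so the identity \eqref{eq:Eulerian_KP} is immediate and the statistic-preserving bijection you flag as delicate never has to be constructed.
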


We present an alternative proof of \cref{prop:Eulerian_p_expansion} using the theory of order-preserving surjections and an interpretation of Eulerian quasisymmetric functions as generating functions of 
banners, which was obtained in \cite[Sec.~3.2]{ShareshianWachs2010}.
Note that this also offers a different route to proving that $Q_{n,j}$ is symmetric and satisfies \eqref{eq:euler_H}.

Let $X$ and $\bar{X}$ denote disjoint copies of the positive integers, that is,
\begin{equation}\label{eq:Xbar}
X\coloneqq\{1,2,3,\dotsc\}
\qquad\text{and}\qquad
\bar{X}\coloneqq\{\bar{1},\bar{2},\bar{3},\dotsc\}
\,.
\end{equation}
Moreover define $\abs{\,\cdot\,}:X\cup\bar{X}\to\setN$ by $\abs{a}=\abs{\bar{a}}=a$.
A \emph{banner} of length $n$ is a word $b=b_1\dotsm b_n$ in the alphabet $X\cup\bar{X}$ 
such that the following three conditions are satisfied:
\begin{enumerate}[(i)]
\item If $b_i\in\bar{X}$ then $\abs{b_i}\geq\abs{b_{i+1}}$ for all $i\in[n-1]$.
\item If $b_i\in X$ then $\abs{b_i}\leq\abs{b_{i+1}}$ for all $i\in[n-1]$.
\item We have $b_n\in X$.
\end{enumerate}
Let $\banners_{n,j}$ denote the set of banners of length $n$ that contain exactly $j$ barred letters.
Given a banner $b\in\banners_{n,j}$ define its \emph{weight} as $\xvec^b\coloneqq \xvar_{\abs{b_1}}\dotsm \xvar_{\abs{b_n}}$.

It can be shown \cite[Thm.~3.6]{ShareshianWachs2010} that
\begin{equation}\label{eq:banners}
Q_{n,j}(\xvec)
=\sum_{b\in\banners_{n,j}}\xvec^b
\,.
\end{equation}
As was observed by R.~Stanley (see~\cite[Thm.~7.2]{ShareshianWachs2010} and the remarks thereafter), 
it is an immediate consequence of \eqref{eq:banners} that Eulerian quasisymmetric functions 
are related to reverse $P$-partitions and chromatic symmetric functions.

For $S\subseteq[n-1]$ let $\banners_{n,S}$ denote the set of banners $b$ such that $b_i\in\bar{X}$ if and only if $i\in S$.
Then
\[
\sum_{b\in\banners_{n,S}}\xvec^b
=K_{P_S}(\xvec)
\,,
\]
where $P_S$ is the poset defined in \cref{ex:path}.
Consequently
\begin{equation}\label{eq:Eulerian_KP}
\sum_{j=0}^{n-1}q^jQ_{n,j}(\xvec)
=\sum_{S\subseteq[n-1]}
q^{\abs{S}}K_{P_S}(\xvec)
=\omega X_{G}(\xvec;q)
\,,
\end{equation}
where $G$ denotes the directed path of length $n-1$.

\begin{proof}[Proof of \cref{prop:Eulerian_p_expansion}]
Let $S\subseteq[n-1]$.
By \cref{thm:KP_opsurj}
\[
K_{P_S}(\xvec)
=\sum_{\alpha\vDash n}
\frac{\Psi_{\alpha}(\xvec)}{z_{\alpha}}
\abs{\opsurj_{\alpha}^{\ast}(P_S)}
\,.
\]
In combination with \eqref{eq:Eulerian_KP} 
and \cref{ex:path} we obtain
\begin{equation*}\label{eq:Eulerian_Psi_expansion}
\begin{split}
\sum_{j=0}^{n-1}q^{j}Q_{n,j}(\xvec)
&=\sum_{\alpha\vDash n}
\frac{\Psi_{\alpha}(\xvec)}{z_{\alpha}}
\sum_{S\subseteq[n-1]}
q^{\abs{S}}
\abs{\opsurj_{\alpha}^{\ast}(P_S)}
\\
&=\sum_{\alpha\vDash n}
\frac{\Psi_{\alpha}(\xvec)}{z_{\alpha}}
A_{\ell(\alpha)}(q)\prod_{i=1}^{\ell(\alpha)}[\alpha_i]_q
\,.
\end{split}
\end{equation*}
The claim follows from \eqref{eq:powersum_Psi_expansion}.
\end{proof}

Using a few tricks one can also deal with the more challenging \emph{cycle Eulerian quasisymmetric functions} $Q_{(n),j}$, which are defined as
\[
Q_{(n),j}(\xvec)
\coloneqq
\sum_{\substack{\sigma\in\symS_n\text{ is a long cycle}\\ \exc(\sigma)=j}}\gessel_{n,\DEX(\sigma)}(\xvec)
\,.
\]
The Eulerian quasisymmetric functions $Q_{n,j}$ can be expressed in terms of cycle Eulerian quasisymmetric functions $Q_{(n),j}$ (and vice versa) via plethysm.
J.~Shareshian and M.~Wachs conjectured and later proved together with B.~Sagan the following expansion into power sum symmetric functions.

\begin{theorem}[{\cite[Conj.~6.5]{ShareshianWachs2010}, \cite[Thm.~4.1]{SaganShareshianWachs2011}}]
\label[theorem]{thm:cyclic_Eulerian_p_expansion}
Let $n\in\setN$.
Then
\begin{equation}
\label{eq:cyclic_Eulerian_p_expansion}
\sum_{j=0}^{n-1}q^jQ_{(n),j}(\xvec)
=\sum_{\lambda\vdash n}
\frac{\psumP_{\lambda}(\xvec)}{z_{\lambda}}
\sum_{d\mid\gcd(\lambda)}
\mu(d)\,d^{\ell(\lambda)-1}\,q^d\,A_{\ell(\lambda)-1}(q^d)
\prod_{i=1}^{\ell(\lambda)}
\left[\frac{\lambda_i}{d}\right]_{q^d}
\,,
\end{equation}
where $\mu$ denotes the number theoretic M{\"o}bius function, and $\gcd(\alpha)\coloneqq\gcd(\alpha_1,\dotsc,\alpha_{\ell})$ for all compositions $\alpha$ with $\ell$ parts.
\end{theorem}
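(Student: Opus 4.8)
**

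The plan is to reduce \cref{thm:cyclic_Eulerian_p_expansion} to the path computation in \cref{ex:cycle} together with the order-preserving surjection machinery of \cref{thm:KP_opsurj}, mimicking the proof of \cref{prop:Eulerian_p_expansion} but with the cycle poset $P_S$ in place of the path poset. First I would establish the analogue of \eqref{eq:Eulerian_KP} for cycles: namely that $Q_{(n),j}(\xvec)$ is the generating function of a suitable set of ``cyclic banners'', and that summing over the descent set of such banners produces $\sum_{0<|S|<n} q^{|S|} K_{P_S}(\xvec)$ with $P_S$ the cyclic poset from \cref{ex:cycle}. The interpretation of $Q_{(n),j}$ via long cycles and the bijection between long cycles and cyclic banners is essentially the cyclic analogue of \cite[Sec.~3.2]{ShareshianWachs2010}, so I would cite that work rather than reprove it; the key point is that a long cycle $\sigma$ with $\exc(\sigma)=j$ gives, via the barred-word construction defining $\DEX$, a cyclic word whose descent pattern is a subset $S \subseteq [n]$ with $0 < |S| < n$ (the extremes being excluded exactly as in \cref{ex:cycle} because $S = \emptyset$ or $S = [n]$ does not correspond to a long cycle).

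Next I would apply \cref{thm:KP_opsurj} to each $K_{P_S}$ and interchange the order of summation to get
\[
\sum_{j=0}^{n-1} q^j Q_{(n),j}(\xvec)
= \sum_{\alpha \vDash n} \frac{\Psi_{\alpha}(\xvec)}{z_{\alpha}}
\sum_{\substack{S \subseteq [n]\\ 0 < |S| < n}} q^{|S|} |\opsurj_{\alpha}^{\ast}(P_S)|
\,.
\]
The inner sum is computed in \cref{ex:cycle}: in Case~1, when $\ell(\alpha) \geq 2$, it equals $nq\,A_{\ell(\alpha)-1}(q)\prod_{i}[\alpha_i]_q$, and in Case~2, when $\alpha = (n)$, it equals $nq[n-1]_q$. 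Substituting these gives a $\Psi$-expansion. To recover \eqref{eq:cyclic_Eulerian_p_expansion} I would then pass from $\Psi_{\alpha}$ back to $\psumP_{\lambda}$ via \eqref{eq:powersum_Psi_expansion}, i.e.\ collect all compositions $\alpha$ that rearrange to a fixed partition $\lambda$. Since the value from \cref{ex:cycle} depends on $\alpha$ only through its parts (as a multiset) and $\ell(\alpha)$, the collection step is straightforward, yielding a coefficient of $\frac{\psumP_{\lambda}(\xvec)}{z_{\lambda}}$ equal to $nq\,A_{\ell(\lambda)-1}(q)\prod_i [\lambda_i]_q$ when $\ell(\lambda) \geq 2$ (and $nq[n-1]_q$ when $\lambda = (n)$).

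The main obstacle is that this naive answer does not literally match the right-hand side of \eqref{eq:cyclic_Eulerian_p_expansion}, which carries a M\"obius-function sum over $d \mid \gcd(\lambda)$. The two expressions must in fact be equal --- so the real content is the elementary but slightly delicate identity
\[
n q\, A_{\ell-1}(q) \prod_{i=1}^{\ell}[\lambda_i]_q
\;\overset{?}{=}\;
\sum_{d \mid \gcd(\lambda)} \mu(d)\, d^{\ell-1}\, q^d\, A_{\ell-1}(q^d)
\prod_{i=1}^{\ell}\left[\tfrac{\lambda_i}{d}\right]_{q^d}
\,,
\]
where $\ell = \ell(\lambda)$ and $n = |\lambda|$. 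Here is where I expect to spend effort, and I suspect the cleanest route is \emph{not} to prove this identity directly but rather to realize that it reflects a plethystic relationship: the cycle Eulerian quasisymmetric functions $Q_{(n),j}$ and the ordinary ones $Q_{n,j}$ are related by a plethystic (exponential/logarithm) substitution, and the M\"obius sum is exactly what the logarithm of a product over cyclic structures produces. So an alternative and probably more robust plan is: prove the clean $\Psi$-expansion of $\sum_j q^j Q_{(n),j}$ stated above as a standalone result (its combinatorial interpretation in terms of $\opsurj^{\ast}$ of cyclic posets being arguably the ``right'' statement), then derive \eqref{eq:cyclic_Eulerian_p_expansion} by invoking the plethystic identity relating $Q_{(n),j}$ and $Q_{n,j}$ from \cite{ShareshianWachs2010,SaganShareshianWachs2011} and the known necklace/M\"obius identity $\sum_{d \mid m} \mu(d)\,p_d^{m/d} = (\text{Lyndon word count generating function})$ --- i.e.\ let the number-theoretic bookkeeping be handled by the standard cyclic-sieving/necklace identity rather than rederiving it. Either way, the combinatorial heart (the cyclic banner interpretation plus \cref{ex:cycle}) is already in place; what remains is purely the translation between the ``composition-indexed, no M\"obius'' form and the ``partition-indexed, with M\"obius'' form.
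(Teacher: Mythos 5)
There is a genuine gap, and it sits exactly where you flagged discomfort. Your first reduction --- that $\sum_j q^j Q_{(n),j}(\xvec)$ equals $\sum_{0<|S|<n} q^{|S|} K_{P_S}(\xvec)$ for the cyclic posets of \cref{ex:cycle} --- is false, and so is the ``delicate identity'' you then propose to prove: both fail already for $n=2$. Indeed $Q_{(2),1}=\completeH_2$ (the unique long cycle $21$ has $\DEX=\emptyset$), so the coefficient of $\psumP_2/z_2$ in $\sum_j q^jQ_{(2),j}$ is $q$, whereas your formula $nq\,A_{\ell-1}(q)\prod_i[\lambda_i]_q$ gives $2q(1+q)$; the M{\"o}bius sum on the right of \eqref{eq:cyclic_Eulerian_p_expansion} genuinely changes the coefficient whenever $\gcd(\lambda)>1$ and is not a mere rewriting of your composition-indexed expression. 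The reason is that $Q_{(n),j}$ enumerates \emph{primitive} bicolored necklaces, i.e.\ circular words counted up to rotation, while $\sum_{0<|S|<n}q^{|S|}K_{P_S}$ enumerates all linear words compatible with some cyclic descent pattern: each primitive necklace is hit $n$ times, and imprimitive words (powers of shorter primitive necklaces, which contribute in the variables $\xvec^{n/d}$) are hit as well. So the clean $\Psi$-expansion with coefficient $nq\,A_{\ell(\alpha)-1}(q)\prod_i[\alpha_i]_q$ is a correct standalone statement not about $Q_{(n),j}$ but about the generating function of \emph{all} such cyclically compatible words.

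This is precisely how the paper proceeds: it introduces the auxiliary functions $F_{n,j}$ (generating functions of all words $b_1\dotsm b_n$ satisfying the cyclic conditions), proves $\sum_j q^jF_{n,j}=\psumP_n+\sum_{0<|S|<n}q^{|S|}K_{P_S}$ and hence, via \cref{thm:KP_opsurj} and \cref{ex:cycle}, exactly the expansion you wrote down (\cref{prop:necklaces_Psi_expansion}); but it then must relate $F_{n,j}$ to the cycle Eulerian functions through $\sum_j q^jF_{n,j}=\sum_{d\mid n}\sum_j d\,q^{jn/d}Q_{(d),j}(\xvec^{n/d})$ and invert this by M{\"o}bius inversion over the divisor lattice, substitute $\psumP_\mu(\xvec^d)=\psumP_{d\mu}(\xvec)$, regroup over partitions $\lambda$ with $d\mid\gcd(\lambda)$, and finally use $\sum_{d\mid n}\mu(d)[n/d]_{q^d}=\sum_{d\mid n}\mu(d)q^d[n/d]_{q^d}$ to absorb the $\psumP_n$ terms. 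Your alternative plan (``invoke the plethystic relation and the necklace/M{\"o}bius identity'') gestures at this, but as written it would have you prove a false standalone result for $Q_{(n),j}$ and then cite away the step that constitutes the actual remaining work; to repair the proposal you need to insert the auxiliary family $F_{n,j}$ (or an equivalent non-primitive, non-rotation-quotiented count) and carry out the M{\"o}bius-inversion bookkeeping explicitly.
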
 

Note that our statement of \cref{thm:cyclic_Eulerian_p_expansion} differs slightly from the given references.
See \cite[Lem.~4.2]{SaganShareshianWachs2011} for a proof that the statements are equivalent.

In~\cite{SaganShareshianWachs2011} \cref{thm:cyclic_Eulerian_p_expansion} is derived from \cref{prop:Eulerian_p_expansion} by the use of plethystic calculus and the manipulation of formal power series.
We present a proof based on the theory of order-preserving surjections, M{\"o}bius inversion and an interpretation of the cycle Eulerian quasisymmetric functions as the generating functions of primitive necklaces due to J.~Shareshian and M.~Wachs.
The application of M{\"o}bius inversion to problems related to primitive necklaces is very classical 
and dates back (at least) to~\cite{MetropolisRota1983}.
In this way the M{\"o}bius function and the power $q^d$ on the right 
hand side of \eqref{eq:cyclic_Eulerian_p_expansion} appear naturally.

Let $X,\bar{X}$ and $\abs{\,\cdot\,}$ be as in \eqref{eq:Xbar}.
A \emph{bicolored necklace} of length $n$ is a \emph{circular word} $o_1\dotsm o_n$ in the alphabet $X\cup\bar{X}$ that satisfies the three conditions below.
Circular means that we do not distinguish between $o_1\dotsm o_n$ and $o_2\dotsm o_no_1$.
\begin{enumerate}[(i)]
\item If $o_i\in X$ then $\abs{o_i}\leq\abs{o_{i+1}}$ for all $i\in[n]$, where indices are viewed modulo $n$.
\item If $o_i\in\bar{X}$ then $\abs{o_i}\geq\abs{o_{i+1}}$ for all $i\in[n]$, where indices are viewed modulo $n$.
\item If $n=1$ then $o_1\in X$.
\end{enumerate}
A bicolored necklace $o$ is \emph{primitive} if the words $o_k\dotsm o_{k+n-1}$ are distinct for all $k\in[n]$, where indices are once more viewed modulo $n$.
Let $\necklaces_{(n),j}$ denote the set of primitive bicolored necklaces of length $n$ that contain exactly $j$ barred letters.
For example, the primitive bicolored necklaces in $\necklaces_{(3),1}$ with letters $1,2,\bar1,\bar2$ are
\[
11\bar1,\quad
22\bar2,\quad
12\bar2,\quad
11\bar2.
\]

Given a bicolored necklace $o_1\dotsm o_n$, define its \emph{weight} as $\xvec^o\coloneqq \xvar_{\abs{o_1}}\dotsm \xvar_{\abs{o_n}}$.
In \cite[Sec.~3.1]{ShareshianWachs2010} J.~Shareshian and M.~Wachs show that cycle Eulerian quasisymmetric functions are the generating functions of primitive bicolored necklaces.
\[
Q_{(n),j}(\xvec)
=\sum_{o\in\necklaces_{(n),j}}\xvec^o
\]
The quasisymmetric functions $Q_{(n),j}$ are not positive linear combinations of reverse $P$-partition enumerators $K_P$ for some simple set of posets $P$.
Instead, let $\banners_{n,j}'$ be the set of words
$b_1\dotsm b_n$ in the alphabet $X\cup\bar{X}$ that satisfy the following three properties:
\begin{enumerate}[(i)]
\item If $b_i\in X$ then $\abs{b_i}\leq\abs{b_{i+1}}$ for all $i\in[n]$, where indices are viewed modulo $n$.
\item If $b_i\in\bar{X}$ then $\abs{b_i}\geq\abs{b_{i+1}}$ for all $i\in[n]$, where indices are viewed modulo $n$.
\item The word $b$ contains exactly $j$ barred letters.
\end{enumerate}
Denote the generating function of $\banners_{n,j}'$ by
\begin{equation*}
F_{n,j}(\xvec)
\coloneqq
\sum_{b\in\banners_{n,j}'}\xvec^b
\,.
\end{equation*}
Note that every word $b_1\dotsm b_n$ can be written uniquely as $(a_1\dotsm a_d)^{n/d}$ where $d$ divides $n$ and $a_1\dotsm a_d$ is a primitive word of length $d$.
Moreover each primitive circular word $o_1\dotsm o_d$ gives rise to $d$ distinct primitive words.
Therefore the quasisymmetric functions $F_{n,j}$ are related to the cycle Eulerian quasisymmetric functions by
\begin{equation}\label{eq:Fnj_Qnj_expansion}
\sum_{j=0}^{n-1}q^jF_{n,j}(\xvec)
=
\sum_{d\mid n}\sum_{j=0}^{d-1}dq^{jn/d}Q_{(d),j}(\xvec^{n/d})
\,,
\end{equation}
where $\xvec^k$ denotes the variables $\xvar_1^k,\xvar_2^k,\dotsc$.

Contrary to the cycle Eulerian quasisymmetric functions $Q_{(n),j}$, the generating functions $F_{n,j}$ are immediately related to reverse $P$-partitions and chromatic quasisymmetric functions of cycles.
More precisely,
\begin{equation}\label{eq:Fnj_KPS_expansion}
\sum_{j=0}^{n-1}q^jF_{n,j}(\xvec)
=\psumP_n(\xvec)+\sum_{\substack{S\subseteq[n] \\0<\abs{S}<n}}q^{\abs{S}}K_{P_S}(\xvec)
=\psumP_n(\xvec)+\omega X_G(\xvec;q)
\,,
\end{equation}
where $P_S$ is defined as in \cref{ex:cycle}, and $G$ denotes the directed cycle of length $n$.

\begin{proposition}[{\cite[Thm.~4.4]{Ellzey2016}}]
\label{prop:necklaces_Psi_expansion}
Let $n\in\setN$.
Then
\[
\sum_{j=0}^{n-1}q^jF_{n,j}(\xvec)
=\psumP_{n}(\xvec)[n]_q
+\sum_{\substack{\lambda\vdash n\\ \ell(\lambda)\geq2}}
\frac{\psumP_{\lambda}(\xvec)}{z_{\lambda}}
\,nq\,A_{\ell(\lambda)-1}(q)\prod_{i=1}^{\ell(\lambda)}[\lambda_i]_{q}
\,.
\]
\end{proposition}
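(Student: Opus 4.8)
The plan is to obtain the identity by assembling four ingredients that are already in place: the expression \eqref{eq:Fnj_KPS_expansion} writing $\sum_{j}q^{j}F_{n,j}$ as $\psumP_{n}$ plus a cancellation-free sum of reverse $P$-partition enumerators $K_{P_{S}}$ over the cycle posets; the $\Psi$-expansion $K_{P_{S}}(\xvec)=\sum_{\alpha\vDash n}\tfrac{\Psi_{\alpha}(\xvec)}{z_{\alpha}}\abs{\opsurj_{\alpha}^{\ast}(P_{S})}$ from \cref{thm:KP_opsurj}; the two explicit evaluations of $\sum_{S}q^{\abs{S}}\abs{\opsurj_{\alpha}^{\ast}(P_{S})}$ computed in \cref{ex:cycle} (one formula for $\ell(\alpha)\geq 2$ and one for $\alpha=(n)$); and finally the refinement \eqref{eq:powersum_Psi_expansion} of the power sums $\psumP_{\lambda}$ by the quasisymmetric power sums $\Psi_{\alpha}$.

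Concretely, I would first substitute \cref{thm:KP_opsurj} into each $K_{P_{S}}$ in \eqref{eq:Fnj_KPS_expansion} and interchange the (finite) sums over $S$ and over $\alpha\vDash n$, arriving at
\[
\sum_{j=0}^{n-1}q^{j}F_{n,j}(\xvec)
=\psumP_{n}(\xvec)
+\sum_{\alpha\vDash n}\frac{\Psi_{\alpha}(\xvec)}{z_{\alpha}}
\sum_{\substack{S\subseteq[n]\\ 0<\abs{S}<n}}q^{\abs{S}}\abs{\opsurj_{\alpha}^{\ast}(P_{S})}
\,.
\]
Then I would isolate the one-part composition $\alpha=(n)$ from the rest. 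For $\alpha=(n)$, Case~2 of \cref{ex:cycle} gives the inner sum $nq[n-1]_{q}$; since $\Psi_{(n)}=\psumP_{n}$ and $z_{(n)}=n$, this contributes $q[n-1]_{q}\,\psumP_{n}(\xvec)$, which combines with the leading $\psumP_{n}(\xvec)$ via the elementary identity $1+q[n-1]_{q}=[n]_{q}$ to yield $[n]_{q}\,\psumP_{n}(\xvec)$, the first summand of the claim. For $\ell(\alpha)\geq 2$, Case~1 of \cref{ex:cycle} gives the inner sum $nq\,A_{\ell(\alpha)-1}(q)\prod_{i}[\alpha_{i}]_{q}$.

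It then remains to regroup $\sum_{\ell(\alpha)\geq 2}$ by the underlying partition $\lambda\vdash n$: for a fixed $\lambda$ with $\ell(\lambda)\geq 2$, all compositions $\alpha\sim\lambda$ share the same number of parts, the same $z_{\alpha}=z_{\lambda}$, and the same (order-independent) product $\prod_{i}[\alpha_{i}]_{q}=\prod_{i}[\lambda_{i}]_{q}$, so the inner sum $\sum_{\alpha\sim\lambda}\Psi_{\alpha}$ collapses to $\psumP_{\lambda}$ by \eqref{eq:powersum_Psi_expansion}, producing exactly the second summand. None of these steps is a real obstacle; the only points that demand a little care are the separate handling of the $\alpha=(n)$ term (the very reason \cref{ex:cycle} splits into two cases) and the bookkeeping when passing from compositions back to partitions — everything else is direct substitution and the rational-function manipulation $1+q[n-1]_{q}=[n]_{q}$.
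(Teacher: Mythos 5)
Your proposal is correct and follows essentially the same route as the paper: substitute \cref{thm:KP_opsurj} into \eqref{eq:Fnj_KPS_expansion}, evaluate the inner sums via the two cases of \cref{ex:cycle}, absorb the $\alpha=(n)$ contribution through $1+q[n-1]_q=[n]_q$, and regroup compositions into partitions via \eqref{eq:powersum_Psi_expansion}. No gaps; this matches the paper's argument step for step.
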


\begin{proof}
By \eqref{eq:Fnj_KPS_expansion} and \cref{thm:KP_opsurj} we obtain
\begin{equation}\label{eq:Fnj_Psi_expansion}
\sum_{j=0}^{n-1}q^jF_{n,j}(\xvec)
=\psumP_{n}(\xvec)
+\sum_{\alpha\vDash n}
\frac{\Psi_{\alpha}(\xvec)}{z_{\alpha}}
\sum_{\substack{S\subseteq[n]\\ 0<\abs{S}<n}}q^{\abs{S}}\abs{\opsurj_{\alpha}^{\ast}(P_S)}.
\end{equation}
By 
\cref{ex:cycle} the right hand side of \eqref{eq:Fnj_Psi_expansion} is equal to
\begin{equation*}
\psumP_{n}(\xvec)(1+q[n-1]_q)
+\sum_{\substack{\alpha\vDash n\\ \ell(\alpha)\geq2}}
\frac{\Psi_{\alpha}(\xvec)}{z_{\alpha}}
\,nq\,A_{\ell(\alpha)-1}(q)\prod_{i=1}^{\ell(\alpha)}[\alpha_i]_q
\,.
\end{equation*}
The claim follows from \eqref{eq:powersum_Psi_expansion}.
\end{proof}

Note that in particular \cref{prop:necklaces_Psi_expansion} implies that $F_{n,j}(\xvec)$ is symmetric.

\begin{proof}[Proof of \cref{thm:cyclic_Eulerian_p_expansion}]
We can generalize \eqref{eq:Fnj_Qnj_expansion} as follows:
For all $k\in[n]$ with $k\mid n$ we have
\[
\sum_{j=0}^{k-1}q^{jn/k}F_{k,j}(\xvec^{n/k})
=\sum_{d\mid k}\sum_{j=0}^{d-1}dq^{jn/d}Q_{(d),j}(\xvec^{n/d})
\,.
\]
The M{\"o}bius inversion formula for the interval $[1,n]$ in the divisor lattice yields
\begin{equation}\label{eq:Qnj_Fnj_expansion}
\sum_{j=0}^{n-1}q^jQ_{(n),j}(\xvec)
=\frac{1}{n}\sum_{d\mid n}\mu(d)\sum_{j=0}^{n/d-1}q^{dj}F_{n/d,j}(\xvec^d)
\,.
\end{equation}
Given a partition $\lambda$ of length $\ell$, let $d\lambda\coloneqq(d\lambda_1,\dotsc,d\lambda_{\ell})$.
By \cref{prop:necklaces_Psi_expansion} the right hand side of \eqref{eq:Qnj_Fnj_expansion} is equal to
\begin{equation*}
\begin{split}
&\sum_{d\mid n}\frac{\mu(d)}{d}
\Bigg(
\frac{\psumP_{n}(\xvec)}{z_{(n/d)}}[n/d]_{q^d}
+\sum_{\substack{\lambda\vdash n/d \\ \ell(\lambda)\geq2}}\frac{\psumP_{d\lambda}(\xvec)}{z_{\lambda}}\,q^d\,A_{\ell(\lambda)-1}(q^d)\prod_{i=1}^{\ell(\lambda)}[\lambda_i]_{q^d}
\Bigg)
\\
&=\sum_{\lambda\vdash n}\frac{\psumP_{\lambda}(\xvec)}{z_{\lambda}}
\sum_{d\mid\gcd(\lambda)}\mu(d)\,d^{\ell(\lambda)-1}\,B_{\ell(\lambda)-1}(q^d)\prod_{i=1}^{\ell(\lambda)}\left[\frac{\lambda_i}{d}\right]_{q^d}
\,,
\end{split}
\end{equation*}
where $B_k(q)\coloneqq qA_k(q)$ if $k>0$, and $B_0(q)\coloneqq1$.
The claim follows from the amusing identity
\begin{equation*}
\sum_{d\mid n}\mu(d)[n/d]_{q^d}
=\sum_{d\mid n}\mu(d)q^d[n/d]_{q^d}
\,.\qedhere
\end{equation*}
\end{proof}

\section{Further directions}
\label[section]{sec:directions}

\subsection{Symmetry}

There is an almost trivial necessary and sufficient condition for when a linear combination of quasisymmetric power sums is a symmetric function that  follows from \eqref{eq:powersum_Psi_expansion}.

\begin{proposition}\label[proposition]{prop:symmetry}
Let $X$ be a quasisymmetric function with
\begin{equation*}
X(\xvec)=\sum_{\alpha}c_{\alpha}\Psi_{\alpha}(\xvec).
\end{equation*}
Then $X$ is symmetric if and only if $c_{\alpha}=c_{\beta}$ for all compositions $\alpha$ and $\beta$ such that $\beta$ can be obtained by permuting the parts of $\alpha$.
\end{proposition}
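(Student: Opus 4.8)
The plan is to deduce the statement directly from the refinement identity \eqref{eq:powersum_Psi_expansion}, namely $\psumP_\lambda = \sum_{\alpha\sim\lambda}\Psi_\alpha$, together with two standard facts: $\{\Psi_\alpha : \alpha\vDash n\}$ is a basis of the degree-$n$ component of $\QSYM$, and $\{\psumP_\lambda : \lambda\vdash n\}$ is a basis of the degree-$n$ component of the ring of symmetric functions. First I would reduce to the homogeneous case: since every $\Psi_\alpha$ is homogeneous of degree $|\alpha|$, and both symmetry and the condition on the $c_\alpha$ are checked degree by degree, it suffices to treat each homogeneous piece separately, so we may assume all compositions $\alpha$ with $c_\alpha\neq0$ are compositions of a fixed $n$.

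For the ``if'' direction I would assume $c_\alpha$ depends only on the rearrangement class of $\alpha$, writing $c_\alpha=\tilde c_{\lambda}$ whenever the parts of $\alpha$ rearrange to the partition $\lambda$. Grouping the sum by rearrangement class and applying \eqref{eq:powersum_Psi_expansion} gives
\[
X(\xvec)=\sum_{\lambda\vdash n}\tilde c_\lambda\sum_{\alpha\sim\lambda}\Psi_\alpha(\xvec)
=\sum_{\lambda\vdash n}\tilde c_\lambda\,\psumP_\lambda(\xvec),
\]
which is symmetric.

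For the ``only if'' direction I would suppose $X$ is symmetric and hence write it uniquely as $X=\sum_{\lambda\vdash n}d_\lambda\psumP_\lambda$. Expanding each $\psumP_\lambda$ via \eqref{eq:powersum_Psi_expansion} yields $X=\sum_{\lambda\vdash n}d_\lambda\sum_{\alpha\sim\lambda}\Psi_\alpha$, so the coefficient of $\Psi_\alpha$ on the right is $d_{\lambda(\alpha)}$, where $\lambda(\alpha)$ is the partition obtained by sorting the parts of $\alpha$. Since the $\Psi_\alpha$ are linearly independent, comparing coefficients forces $c_\alpha=d_{\lambda(\alpha)}$ for every $\alpha\vDash n$; in particular $c_\alpha=c_\beta$ whenever $\beta$ is a rearrangement of $\alpha$.

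The argument has essentially no obstacle beyond bookkeeping: one must observe that the relation $\alpha\sim\lambda$ appearing in \eqref{eq:powersum_Psi_expansion} is precisely ``the parts of $\alpha$ permute to $\lambda$'', which is the same relation as ``permuting the parts of $\alpha$'' in the statement, and one must justify the reduction to homogeneous components. No input is needed beyond \eqref{eq:powersum_Psi_expansion} and the basis property of $\{\Psi_\alpha\}$.
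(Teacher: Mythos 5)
Your proof is correct and follows exactly the route the paper intends: the paper states the proposition as an immediate consequence of the refinement identity \eqref{eq:powersum_Psi_expansion}, and your argument simply fleshes that out using the basis properties of $\{\Psi_\alpha\}$ and $\{\psumP_\lambda\}$. No issues.
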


\cref{prop:symmetry} offers a method for proving (or disproving) that a given quasisymmetric function is symmetric.
As mentioned at the end of \cref{sec:revPP}, it would be particularly interesting if this idea was applicable to the generating functions $K_{P,w}$.

Furthermore by \cref{prop:symmetry} any symmetric function for which the expansion into quasisymmetric power sums is known, immediately gives rise to a set of symmetries on its coefficients $c_{\alpha}$.
These symmetries might not at all be obvious from a purely combinatorial point of view.
For example, consider the expansion of Schur functions into quasisymmetric power sums given in \cref{sec:schur}.
It follows from the symmetry of Schur functions that the sum
\begin{equation*}
\sum_{\substack{
T\in\SYT(\lambda) \\
\DES(T)\in U_{\alpha}
}}
(-1)^{\DES(T)\setminus S_{\alpha}}
\end{equation*}
is invariant under the permutation of the parts of $\alpha$.
The authors are unaware of a proof of this fact that does not appeal to the theory of symmetric functions.
We expect that many potentially interesting combinatorial problems can be obtained in similar fashion.

Lastly, if a combinatorial statistic $c$ on compositions appears to satisfy the symmetry properties of \cref{prop:symmetry}, then it might be worth a try to investigate the quasisymmetric function
\begin{equation*}
X_n(\xvec)
\coloneqq
\sum_{\alpha\vDash n}
c(\alpha)\frac{\Psi_{\alpha}}{z_{\alpha}}
\,.
\end{equation*}
Proving that $X_n$ is symmetric, for example by deriving its expansion into the fundamental or monomial bases, will also prove the symmetry of $c$.

\subsection{Schur-positivity and \texorpdfstring{$h$}{h}-positivity}

There are open problems regarding a combinatorial proof of the Schur-positivity of LLT polynomials \cite{Haglund2005Macdonald},
as well as proving $\elementaryE$-positivity of chromatic symmetric functions \cite{StanleyStembridge1993}.

Since both families of polynomials are related to the $K_P(\xvec)$,
it is natural to ask if every symmetric positive linear combination of such functions is $h$-positive (or weaker, Schur-positive).
This is not the case.
A computer search gave us the following counterexamples:
\begin{align*}
\begin{tikzpicture}[scale=0.8,baseline=-5mm,
circ/.style={circle,draw,inner sep=0.8pt, minimum width=12pt,font=\footnotesize}
]
\draw(1,2)node{$A$};
\node[circ] (A) at (0,0) {$v_1$}; 
\node[circ] (B) at (1,0) {$v_2$};
\node[circ] (C) at (2,0) {$v_3$};
\node[circ] (D) at (1,1) {$v_4$};
\draw[black,thick] (A)--(D)--(B)(C)--(D);
\end{tikzpicture}
&&
\begin{tikzpicture}[scale=0.8,baseline=-5mm,
circ/.style={circle,draw,inner sep=0.8pt, minimum width=12pt,font=\footnotesize}
]
\draw(.5,2)node{$B$};
\node[circ] (A) at (0,0) {$v_1$};
\node[circ] (B) at (1,0) {$v_2$};
\node[circ] (C) at (0,1) {$v_3$};
\node[circ] (D) at (1,1) {$v_4$};
\draw[black,thick] (A)--(C)--(B)--(D)--(A);
\end{tikzpicture}
&&
\begin{tikzpicture}[scale=0.8,baseline=-5mm,
circ/.style={circle,draw,inner sep=0.8pt, minimum width=12pt,font=\footnotesize}
]
\draw(1,2)node{$C$};
\node[circ] (A) at (1,0) {$v_1$};
\node[circ] (B) at (0,1) {$v_2$};
\node[circ] (C) at (1,1) {$v_3$};
\node[circ] (D) at (2,1) {$v_4$};
\draw[black,thick] (B)--(A)--(C)(A)--(D);
\end{tikzpicture}
&&
\begin{tikzpicture}[scale=0.8,baseline=-5mm,
circ/.style={circle,draw,inner sep=0.8pt, minimum width=12pt,font=\footnotesize}
]
\draw(-1,2)node{$D$};
\node[circ] (A) at (1,0) {$v_1$};
\node[circ] (B) at (1,1) {$v_2$};
\node[circ] (C) at (0,2) {$v_3$};
\node[circ] (D) at (2,2) {$v_4$};
\draw[black,thick] (A)--(B)--(C)(B)--(D);
\end{tikzpicture}
\end{align*}
For the above posets
\begin{align*}
2K_A(\xvec)
+3K_B(\xvec)
+2K_C(\xvec)
&=
7\schurS_{4}
+7\schurS_{31}
+\schurS_{22}
+2\schurS_{211}
\\&=
2h_4
+4h_{31}
-h_{22}
+2h_{1111}
\end{align*}
is Schur-positive but not $h$-positive, and
\begin{align*}
K_A(\xvec)
+3K_B(\xvec)
+K_C(\xvec)
+3K_D(\xvec)
&=
8\schurS_{4}
+5\schurS_{31}
-\schurS_{22}
+\schurS_{211}
\end{align*}
is not even Schur-positive.

\subsection{Murnaghan--Nakayama-type formula}

There are several quasisymmetric analogues of Schur functions. 
Here is an incomplete list of functions that can be viewed as such:

\begin{itemize}
 \item The \emph{fundamental quasisymmetric functions}. 
 
 \item The \emph{quasisymmetric Schur functions}, $\QS_\alpha(\xvec)$, introduced by J.~Haglund, K.~Luoto, S.~Mason and S.~van~ Willgienburg in \cite{HaglundEtAl2011}.

 \item The \emph{row-strict Schur functions}, $\RS_\alpha(\xvec)$ by S.~Mason and J.~Remmel which are 
 related to the quasisymmetric Schur functions as $\RS_\alpha(\xvec) = \omega(\QS_\alpha(\xvec))$.
 
 \item The \emph{Young quasisymmetric Schur functions}, which are also closely related to the two above quasisymmetric variants of Schur functions, see \cite{LuotoEtAl2013IntroQSymSchur}.
 
 \item The \emph{dual immaculate quasisymmetric functions}, which expands positively into the Young quasisymmetric Schur functions, see \cite{AllenHallamMason2018}.
 
 \item The \emph{extended Schur functions}, by S.~Assaf and D.~Searles, see \cite{AssafSearles2017}.  This family includes the usual Schur functions.
\end{itemize}

All of the above families expand positively into the fundamental quasisymmetric
functions, and are bases for the space of quasisymmetric functions.
Using \cref{thm:gesselInPSum} one can give analogues of Roichman's formula, see \cref{thm:schurRoichmanPexp}, for these bases.

The classical Murnaghan--Nakayama rule \cite{Murnaghan1937,Nakayama1940} states that
\[
\psumP_r(\xvec)\,\schurS_\lambda(\xvec)
=
\sum_\mu
(-1)^{\mathrm{ht}(\mu/\lambda)}\,\schurS_\mu(\xvec)
\,,
\]
where the sum is over all $\mu$ such that $\mu/\lambda$ is a ribbon of size $r$.
A natural future direction is then to seek quasisymmetric refinements or analogues of the
Murnaghan--Nakayama rule.
Let $\{X_\alpha\}$ be any of the above families of quaissymmetric functions indexed by partitions. 
The problem is then to find a rule that 
gives the coefficients $\chi_{\alpha\beta}^\gamma$ in the expansion
\[
\Psi_\alpha(\xvec)\,X_\beta(\xvec)
=
\sum_\gamma \chi_{\alpha\beta}^\gamma\,X_\gamma(\xvec)
\,.
\]
Possible research in this direction is also discussed in \cite[Sec.~7.1]{BallantineDaughertyHicksMason2017}.

\subsection{Poset invariants}

Whenever a class of combinatorial objects has a nontrivial isomorphism problem 
(such as posets, graphs or knots) it immediately becomes an interesting task to find 
invariants that might be used to distinguish such objects.

In~\cite{McNamaraWard2014} P.~McNamara and R.~Ward ask for a 
necessary and sufficient condition that two labeled posets $(P,w)$ and $(Q,w')$ have the 
same $P$-partition generating function, that is, $K_{P,w}=K_{Q,w'}$.

For naturally labeled posets \cref{thm:KP_opsurj} yields that $K_{P}=K_{Q}$ if and 
only if $\abs{\opsurj_{\alpha}^{\ast}(P)}=\abs{\opsurj_{\alpha}^{\ast}(Q)}$ for all compositions $\alpha$.
That is, the numbers of certain order-preserving surjections onto chains agree.
Note that this includes (in the case of naturally labeled posets) 
the observation~\cite[Prop.~3.2]{McNamaraWard2014} that $K_{P}=K_{Q}$ 
implies $\abs{\ILinExt(P,w)}=\abs{\ILinExt(Q,w')}$.
The judgment whether our answer is more useful than the trivial 
answer, ``$K_{P}=K_{Q}$ if and only if the multisets of descent 
sets $\{\DES(\sigma):\sigma\in\ILinExt(P,w)\}$ and $\{\DES(\sigma):\sigma\in\ILinExt(Q,w')\}$ agree'', is left to the reader.

P.~McNamara and R.~Ward
pose several other problems in this direction, many of which where recently solved for naturally labeled posets by R.~Liu and M.~Weselcouch~\cite{LiuWeselcouch2018}.
It could be worth investigating whether the $\Psi$-expansion of $K_P$ has applications in this regard.

\medskip
Another open question that was first raised by R.~Stanley in \cite[p. 170]{Stanley95Chromatic} is whether the chromatic symmetric function $\chrom_G(\xvec)$ defined in \cref{sec:chromatic} distinguish trees.
This was investigated, for instance, by J.~Martin, M.~Morin and J.~Wagner in ~\cite{MartinMorinWagner2008}.

Let $T$ be a rooted tree.
We interpret $T$ as a poset by declaring the edges to be cover relations, and the root to be the unique minimal element.
It was recently shown by T.~Hasebe and S.~Tsujie that the generating function of reverse $P$-partitions distinguishes rooted trees~\cite{HasebeTsujie2017}.
The following is a straightforward consequence of this result.

\begin{proposition}\label{prop:rooted trees}
The chromatic quasisymmetric function $\chrom_G(\xvec;q)$ distinguishes rooted trees with all edges directed away from the root.
\end{proposition}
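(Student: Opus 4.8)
The plan is to recover, from $\chrom_T(\xvec;q)$ for such a directed tree $T$ with $n$ vertices, the reverse $P$-partition generating function $K_{P(T)}$ of the poset $P(T)$ obtained by declaring the root the unique minimal element, and then to invoke the theorem of Hasebe and Tsujie \cite{HasebeTsujie2017} quoted just above the statement.

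First I would use the expansion established in the proof of \cref{thm:chromPsiPos}, namely $\chrom_T(\xvec;q)=\sum_{\theta}q^{\asc(\theta)}K_{P(\theta),w(\theta)}(\xvec)$, where $\theta$ ranges over the acyclic orientations of the underlying undirected graph, $\asc(\theta)$ is the number of edges of $\theta$ pointing the same way as in $T$, $P(\theta)$ is the transitive closure of the directed edges of $\theta$, and $w(\theta)$ is order-reversing. Since the underlying graph is a tree, every one of its $2^{\,n-1}$ orientations is acyclic, and $\asc(\theta)=n-1=|E(T)|$ holds for exactly one of them, namely $T$ itself. Hence the coefficient of $q^{\,n-1}$ in $\chrom_T(\xvec;q)$ is exactly $K_{P,w}(\xvec)$, where $P=P(T)$ is the transitive closure of the edges of $T$ oriented away from the root, that is, $T$ regarded as a poset with the root as unique minimal element, and $w$ is an order-reversing labeling. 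Applying the automorphism $\omega$ to this coefficient and using $\omega K_{P,w}=K_{P,\,n+1-w}=K_P$ (here $n+1-w$ is a natural labeling because $w$ is order-reversing), I recover $K_P(\xvec)$ from $\chrom_T(\xvec;q)$.

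Now suppose $T_1$ and $T_2$ are rooted trees, all edges directed away from the root, with $\chrom_{T_1}(\xvec;q)=\chrom_{T_2}(\xvec;q)$. Comparing degrees in these homogeneous quasisymmetric functions forces $|V(T_1)|=|V(T_2)|=:n$; comparing coefficients of $q^{\,n-1}$ and applying the bijective map $\omega$ as above yields $K_{P(T_1)}(\xvec)=K_{P(T_2)}(\xvec)$, where $P(T_i)$ is $T_i$ with the root as unique minimal element. By \cite{HasebeTsujie2017} the generating function of reverse $P$-partitions distinguishes rooted trees, so $P(T_1)\cong P(T_2)$ as posets with a distinguished minimal element; equivalently $T_1\cong T_2$ as rooted trees, whence the two directed graphs are isomorphic. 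This proves the proposition.

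The argument is essentially bookkeeping, so the only thing requiring care is the alignment of conventions: that the top power of $q$ isolates the single acyclic orientation that agrees with $T$; that the transitive closure of that orientation places the root at the bottom, matching the convention fixed just before the statement (and used by Hasebe and Tsujie); and that a single application of $\omega$ converts the order-reversing labeling produced by the expansion in \cref{thm:chromPsiPos} into the natural one needed to speak of $K_{P(T)}$ itself. (Had one instead read off the coefficient of $q^{0}$, one would obtain the dual, root-at-top poset, which works equally well but matches the other convention for rooted-tree posets.)
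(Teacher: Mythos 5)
Your proposal is correct and takes essentially the same approach as the paper: extract the coefficient of $q^{n-1}$ from the orientation expansion in the proof of \cref{thm:chromPsiPos}, identify it with the $P$-partition generating function of the tree viewed as a poset with the root as unique minimal element, and invoke the Hasebe--Tsujie result. The only difference is that you spell out the application of $\omega$ to convert the order-reversing labeling into a natural one, a bookkeeping step the paper leaves implicit.
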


\begin{proof}
Let $T$ be a rooted tree on $n$ vertices.
It follows from the proof of \cref{thm:chromPsiPos} that the coefficient of $q^{n-1}$ in $\chrom_T(\xvec;q)$ is just $K_{T,w}$, where $T$ is viewed as a poset as above and $w$ is an order-reversing labeling.
Thus the claim is a consequence of~\cite[Thm.~1.3]{HasebeTsujie2017}.
\end{proof}

It is an open problem whether $\chrom_G(\xvec;q)$ distinguishes all oriented trees.

\medskip
Let $P$ and $Q$ be (disjoint) posets.
Let $P+Q$ denote the \emph{direct sum} of $P$ and $Q$, that is, the partial order on $P\sqcup Q$ defined by $\poseta<\posetb$ if $\poseta<_P\posetb$ or $\poseta<_Q\posetb$.
Let $P\oplus Q$ denote the \emph{ordinal sum} of $P$ and $Q$, that is, the partial order on $P\sqcup Q$ defined by $\poseta<\posetb$ if (i) $\poseta<_P\posetb$, or (ii) $\poseta<_Q\posetb$, or (iii) $\poseta\in P$ and $\posetb\in Q$.
Clearly all rooted trees can be obtained by successively taking direct sums and adding a minimal element, that is, forming the poset $\{\hat{0}\}\oplus P$.
A poset is called \emph{series parallel} if it can be built from sigletons using only the two operations direct and ordinal sum.

A very natural question raised by T.~Hasebe and S.~Tsujie is whether $K_P$ distinguishes series parallel posets.
The ideas in this paper might offer a new angle to attack this problem, since it is not too difficult to compute the expansions of $K_{P+Q}$ respectively $K_{P\oplus Q}$ into quasisymmetric power sums recursively using \cref{thm:KP_opsurj}.

Another possible question is whether other constructions on posets 
have a simple interpretation in the basis of quasisymmetric power sums.

\subsection{Type \texorpdfstring{$B_n$}{B} analogues}

There is a type $B$ analogue of \cref{thm:schurRoichmanPexp} given in \cite{AdinEtAl2017}. 
This setting uses Schur functions, power sum symmetric functions and fundamental quasisymmetric functions in two sets of variables $\xvec$ and $\yvec$.
It is natural to ask if this result extends to an analogue of \cref{thm:gesselInPSum} that uses some kind of quasisymmetric power sums in two sets of variables.

\bibliographystyle{alphaurl}
\bibliography{bibliography}

\end{document}